\documentclass[12pt]{amsart}
\usepackage{amsmath}
\usepackage{extarrows}
\usepackage{amsfonts}
\usepackage{amssymb}
\usepackage[all,cmtip]{xy}           

\usepackage{bbm}
\usepackage{bbding}
\usepackage{txfonts}
\usepackage[shortlabels]{enumitem}
\usepackage{ifpdf}
\ifpdf
\usepackage[colorlinks,final,backref=page,hyperindex]{hyperref}
\else
\usepackage[colorlinks,final,backref=page,hyperindex,hypertex]{hyperref}
\fi
\usepackage{tikz-cd}
\usepackage[active]{srcltx}
\usepackage{tikz}
\usepackage{amscd}
\usepackage{bm}
\usepackage{mathrsfs}

\makeatletter

\newtheorem{thm}{Theorem}[section]
\newtheorem{prop}[thm]{Proposition}
\newtheorem{lem}[thm]{Lemma}
\newtheorem{cor}[thm]{Corollary}
\newtheorem{prop-def}{Proposition-Definition}[section]

\newtheorem{defn}[thm]{Definition}

\newtheorem{rmk}[thm]{Remark}
\newtheorem{exam}[thm]{Example}

\newcommand{\nc}{\newcommand}
\nc{\Sh}{{\mathrm{Sh}}}
\nc{\ot}{\otimes}
\nc{\Id}{\mathrm{Id}}
\nc{\id}{\mathrm{Id}}
\nc{\NR}{{\rm NR}}
\nc{\G}{{\rm G}}
\nc{\rmS}{{\rm S}}
\nc{\T}{{\rm T}}
\nc{\Hom}{\mathrm{Hom}}
\nc{\rmC}{ {\mathrm{C}}}
\nc{\Alg}{\mathsf{Alg}}
\nc{\RB}{{\mathsf{RB}}}
\nc{\ad}{\mathrm{ad}}
\nc{\Lie}{\mathrm{Lie}}
\nc{\RBA}{{\mathrm{RBA}_{(\mu,\lambda)}}}
\nc{\C}{\mathrm{C}}
\nc{\rmH}{\mathrm{H}}
\nc{\rar}{\rightarrow}
\nc{\End}{\mathrm{End}}

\def\bc{\begin{center}}
	\def\ec{\end{center}}

\def\hang{\hangindent\parindent}
\def\textindent#1{\indent\llap{\qquad #1\ \ \enspace}\ignorespaces}
\def\ref{\par\hang\textindent}

\def\o{\otimes}

\def \frakg{ \frak g}
\def\bfk{\mathbf{k}}

\def\a{\phi_\mathfrak{g}}
\def \frakg{\mathfrak{g}}

\def \frakm{\mathfrak{m}}
\def \fraka{\mathfrak{a}}
\def \frakh{\mathfrak{h}}
\allowdisplaybreaks
\begin{document}
	\title[Extended Rota-Baxter algebras]{Deformation theory and the  controlling $L_{\infty}$-structure  of extended Rota-Baxter  algebras}

	\author{Jian Yang}
	\address{Jian Yang\\
		School of Mathematical Sciences, 
		East China Normal University,
		Shanghai 200241,
		China}

	\email{y.j0@qq.com }
	
	\date{\today}

		\begin{abstract} In this paper, we introduce the controlling $L_{\infty}[1]$-algebra of extended Rota-Baxter algebras. As applications, we obtain the cohomology of  extended Rota-Baxter  algebras and  homotopy extended Rota-Baxter  algebras.  We also study  abelian extensions and formal deformations by the lower cohomology group.  Lastly, we describe the intrinsic relationship among extended Rota-Baxter algebras, extended Rota-Baxter Lie algebras, and the eigenvalues of the extended Rota-Baxter operator.
			
			 As  the generalization of Rota-Baxter algebras with weight and modified Rota-Baxter algebras, the corresponding results also hold for those algebras. 
		\end{abstract}
	
		\subjclass[2020]{
				16E40   
				16S80   
				17B38  
				16S70   
			}
	
	\keywords{extended Rota-Baxter algebra, $L_{\infty}[1]$-algebra, deformation, cohomology, extended Rota-Baxter Lie algebra }
	 
	\maketitle
	
	\tableofcontents
	
	\allowdisplaybreaks

	\section{Introduction}
	
	In this paper, we study  extended Rota-Baxter algebras, a generalization of Rota-Baxter algebras with weight and modified Rota-Baxter algebras. 
	
	\subsection{Extended Rota-Baxter algebras}\
	
	Baxter introduced Rota-Baxter algebras  in \cite{Bax60},  interestingly, this work originated from  research in probability theory. Then Rota \cite{Rot69}, Cartier \cite{Car72} and other researchers subsequently entered this field.   Semenov-Tian-Shansky independently discovered that the Rota-Baxter operator on  Lie algebras  is a solution of the classical Yang-Baxter equation\cite{Sem83}. Similarly, for the case of associative algebras, it was given by Aguiar \cite{Agu00} and Bai \cite{Bai07}. Subsequently, Connes and Kreimer  \cite{CK00} established a significant connection between Rota-Baxter algebras and mathematical physics  in the renormalization of quantum field theory. Moreover,  Guo and Keigher \cite{GK00a, GK00b} realized free commutative Rota-Baxter algebras. In \cite{Sem83}, the authors also introduced a modified version of the classical Yang-Baxter
	equation  whose solutions are just the modified Rota-Baxter operators, which they call  modified r-matrices.

	Rota-Baxter algebras yield a wealth of applications and linkages across different mathematical domains such as combinatorics \cite{Rot95}, multiple zeta values in number theory \cite{GZ08}, operad theory \cite{Agu01, BBGN13}, and Hopf algebras \cite{CK00}. The modified classical Yang-Baxter
	equation has important applications in the study of Lax equations, affine geometry on Lie groups, factorization problems
	in Lie algebras and compatible Poisson structures \cite{BGN10,B90,L99,S15}.
	
	Due to the importance of Rota-Baxter algebras and
	modified Rota-Baxter algebras, the notion of extended Rota-Baxter algebras is introduced by Zhang, Guo and Qiu \cite{ZGQ24}.  
	
   	Fix $\mu,\lambda\in \bfk$ where $\bfk$ is a field. An extended Rota-Baxter operator of weight $(\mu,\lambda)$ on an associative algebra $A$   is a linear operator $T:A\to A$ satisfying the identity
   
   \begin{equation} \label{eq:diffwt}
   	T(u)T(v)=T(T(u)v+uT(v)+\mu uv)+\lambda uv, \quad u, v\in A.
   \end{equation}
   An associative algebra endowed with an extended Rota-Baxter operator of weight $(\mu,\lambda)$ is called an extended Rota-Baxter algebra of weight $(\mu,\lambda)$.

    \subsection{Deformations and homotopy theory}\
    
    A central philosophy in deformation theory, inspired by the foundational work of Gerstenhaber, Nijenhuis, Richardson, Deligne and others, is that the deformation of a given mathematical structure can be described by a differential graded (dg) Lie algebra or, more generally, an $L_\infty$-algebra. Consequently, an essential problem in deformation theory is the explicit construction of the dg Lie algebra or $L_\infty$-algebra that governs the deformation theory of the structure under study.
    
    Another significant challenge in the study of algebraic structures is understanding their homotopy analogs, such as $A_\infty$-algebras for associative algebras and $L_\infty$-algebras for Lie algebras.

    	The deformation theory  and  homotopy theory of  Rota-Baxter  algebras had been absent for a long time despite   the   importance of Rota-Baxter algebras.  Recently there are some breakthroughs in this direction.

     Fortunately, there exists a powerful technique of  derived brackets which was invented by   Voronov   \cite{Vor05}. It has been successfully applied to obtain the controlling algebra of relative Rota-Baxter Lie algebra of weight zero \cite{LST,LST2}, the controlling algebra of relative Rota-Baxter algebra of weight zero \cite{DM20} and of Rota-Baxter Lie algebra of arbitrary weight \cite{yj}. However it cannot be applied directly to the general case of extended Rota-Baxter Lie algebras.
     
    The  case for  nonzero weight is more difficult. Wang and Zhou gave the explicit construction of the minimal model of the Rota-Baxter operad and developed the deformation and homotopy theory of Rota-Baxter algebras with weight \cite{WZ24}.
    
    For modified Rota-Baxter algebras, Das \cite{Das22} gave the cohomology theory and applications.  However, two significant challenges, namely the controlling algebra and the homotopy analog of modified Rota-Baxter algebras, are still open. 
    
    We now make a crucial remark on the relationship with existing work. Specifically, the controlling $L_{\infty}[1]$-algebra of Rota-Baxter algebras with weight coincides  with that obtained by Wang and Zhou \cite{WZ24}. This provides further strong evidence, independent of this paper, that the controlling algebra of extended Rota-Baxter algebras is a good one. Comparison with \cite{Das22}: Importantly, the cohomology  we established for modified Rota-Baxter algebras differs from the one proposed by Das. However, the lower cohomology groups in both theories are isomorphic. This commonality in low-degree cohomology is sufficient for applying the theory to study formal deformations and abelian extensions of modified Rota-Baxter algebras, which explains the consistency in those applications despite the underlying theoretical discrepancy. Our results also address two significant challenges: the $L_\infty$-structure and homotopy analogs in the special case of modified Rota-Baxter algebras.

    \subsection{Layout of the paper}\
	
	 We recall some basic notions and  introduce some facts about  extended Rota-Baxter algebras and their modules in Section~\ref{sec 2}. In particular, there is a key descending property for extended Rota-Baxter bimodules in Proposition~\ref{descending property}.
	 
	 In Section \ref{sec 3}, we recall the curved dg Lie algebra and the Maurer-Cartan characterization of associative algebras. We also present the Maurer-Cartan characterization and cohomology of the extended Rota-Baxter operator on associative algebras  in Subsection \ref {subsec 3.2}.
	 
	  In the next section \ref{sec 4}, we recall the $L_\infty[1]$-algebra, since  the controlling algebra of extended Rota-Baxter algebras is an $L_{\infty}[1]$-algebra .
	 
	 The following Section \ref{sec 5} is the key section of our paper. We construct the $L_{\infty}[1]$-algebra of extended Rota-Baxter algebras in Theorem \ref{thm: Linfinity for ass case} and the cohomology of extended Rota-Baxter algebras with arbitrary coefficients in Proposition-Definition \ref {RBA homology on M}.

	 In Section \ref{sec 6}, we study  formal deformations, abelian extensions and homotopy extended Rota-Baxter algebras, as one would expect of a good cohomology theory and of a good  controlling algebra of extended Rota-Baxter algebras.
	 
	 In the last Section \ref{sec 7}, we will see that the controlling algebra is more complex than that of  Rota-Baxter algebras with weight and modified Rota-Baxter algebras. But it seems natural since there exist connections between the controlling $L_{\infty}[1]$-algebra and the eigenvalue of the extended Rota-Baxter operator. We also compare  extended Rota-Baxter algebras and extended Rota-Baxter Lie algebras, and we will see that they have natural connections.
	
	\bigskip
	
	\section{Extended Rota-Baxter algebras}\label{sec 2}
	
    Throughout this paper,	$\bfk$ is a field  with $\mathrm{char}(\bfk)=0$.
    Let $V=\oplus_{n\in \mathbb{Z}} V^n$ be a graded vector space. We define the shift map $s$  by $(sV)^n:=V^{n+1}$. 
    
    Now we introduce extended Rota-Baxter algebras in detail.
    	
	\begin{defn}
		\textbf{An extended Rota-Baxter algebra of weight $(\mu,\lambda)$} for $\mu,\lambda \in \bfk$ is a triple $(A,\pi,T)$ where $(A,\pi)$ is an algebra (we usually omit the notation of  $\pi$)  and  $T:A\to A$ is  a linear map such that $T(a)T(b)=T(T(a)b+aT(b)+\mu ab) +\lambda ab$ for $a,b \in A.$
		
		A morphism of extended Rota-Baxter algebras between $(A,T)$ and $(B,T')$ of the same weight is a morphism of algebras $f: A \to B$ such that $f\circ T= T'\circ f$.
	\end{defn}

   \begin{defn}\label{Def: extended Rota-Baxter bimodules}
   	Let $(A,T)$ be an extended Rota-Baxter algebra of weight $(\mu,\lambda)$ and $M$ be a bimodule
   	over the algebra $A$. We say that $M$ is \textbf{a bimodule
   	over the extended Rota-Baxter algebra} $(A,T)$  or \textbf{an extended Rota-Baxter bimodule} if  $M$ is endowed with a
   	linear map $T_M: M\rightarrow M$ such that the following
   	equations
   	\begin{eqnarray}T(a) T_M(m)&=&T_M\big(aT_M(m)+T(a)m+\mu am\big)+\lambda am,\\
   		T_M(m)T(a)&=&T_M\big(mT(a)+T_M(m)a+\mu ma\big)+\lambda ma.
   	\end{eqnarray}
   	hold for any $a\in A$ and $m\in M$.
   \end{defn}

     \begin{exam}
     	A Rota-Baxter algebra of weight $\lambda$ is precisely an extended Rota-Baxter algebra of weight $(\lambda,0)$.
     	
     	A modified Rota-Baxter algebra of weight $\lambda$ is just an extended Rota-Baxter algebra of weight $(0,\lambda)$.
     \end{exam}
 
     \begin{exam}
	The extended Rota–Baxter algebra $(A,T)$ itself, with $T_M=T$, is an extended Rota–Baxter bimodule over $(A,T)$, it is called the regular extended Rota–Baxter bimodule.
     \end{exam}

   The following is easy to check:
   \begin{prop}  \label{Prop: trivial extension of extended Rota-Baxter bimodule}
   	Let $(A,T)$ be an extended Rota-Baxter algebra of weight $(\mu,\lambda)$ and let $M$ be an extended Rota-Baxter bimodule over $(A,T)$. Then $(A\oplus M, T\oplus T_M)$ is an extended Rota-Baxter algebra with the multiplication defined by \begin{eqnarray}(a,m)(b,n)=(ab, an+mb).\end{eqnarray}
   	
   	Denote it by $A\ltimes  M$, this is called the semi-direct product (or trivial extension) of $A$ by $M$.
   \end{prop}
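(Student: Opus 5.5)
The plan is a direct verification, done in two parts.

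\emph{Associativity.} First we check that $A\oplus M$ with the product $(a,m)(b,n)=(ab,an+mb)$ is an associative algebra. This is the standard trivial extension. Expanding $((a,m)(b,n))(c,p)$ and $(a,m)((b,n)(c,p))$, both equal
\[
(abc,\; abp+anc+mbc).
\]
The first component uses associativity of $A$, and the second uses that $M$ is an $A$-bimodule, i.e.\ $(ab)p=a(bp)$, $(an)c=a(nc)$ and $(mb)c=m(bc)$.

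\emph{The extended Rota-Baxter identity.} Put $\widehat T=T\oplus T_M$, so $\widehat T(a,m)=(T(a),T_M(m))$. We must show, for all $(a,m),(b,n)\in A\oplus M$,
\[
\widehat T(a,m)\widehat T(b,n)=\widehat T\big(\widehat T(a,m)(b,n)+(a,m)\widehat T(b,n)+\mu (a,m)(b,n)\big)+\lambda (a,m)(b,n).
\]
Both sides are bilinear. Since the product kills $M\times M$, it suffices to check the identity on pure pairs: $(a,0),(b,0)$; $(a,0),(0,n)$; $(0,m),(b,0)$; and $(0,m),(0,n)$.

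\begin{itemize}
\item For $(a,0),(b,0)$, the identity lies in the $A$-component and is exactly the extended Rota-Baxter identity for $T$.
\item For $(0,m),(0,n)$, every product involved lies in $M\cdot M$, which is zero. Both sides vanish.
\item For $(a,0),(0,n)$, the left side is $(0,T(a)T_M(n))$. On the right, $\widehat T(a,0)(0,n)=(0,T(a)n)$ and $(a,0)\widehat T(0,n)=(0,aT_M(n))$. So the right side is
\[
\big(0,\;T_M(T(a)n+aT_M(n)+\mu an)+\lambda an\big).
\]
Equality is precisely the first defining equation of an extended Rota-Baxter bimodule.
\item The mixed case $(0,m),(b,0)$ reduces in the same way to the second bimodule equation.
\end{itemize}

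Summing the four cases by bilinearity gives the identity for general elements. We do not expect a real obstacle; the only care needed is to justify the bilinear reduction and to track which component each term lands in.
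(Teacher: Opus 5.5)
Your proof is correct, and it is the direct verification the paper has in mind; the paper only remarks that the statement is easy to check and writes no argument. You get associativity from the bimodule axioms, and bilinearity reduces the operator identity to four pure cases: the weight-$(\mu,\lambda)$ identity for $T$, a trivial case because $M\cdot M=0$, and the two defining equations of an extended Rota-Baxter bimodule. One small wording point: the reduction to pure pairs follows from bilinearity alone, while $M\cdot M=0$ is only what disposes of the $(0,m),(0,n)$ case.
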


   Recall first the  following interesting observation:
   \begin{prop}\label{Prop: new RB algebra}
   	Let $(A,T)$ be an extended Rota-Baxter algebra of weight $(\mu,\lambda)$. Define a new binary operation as follows:
   	\begin{eqnarray}a\star  b:=a\cdot T(b)+T(a)\cdot b+\mu a\cdot b\end{eqnarray}
   	for any $a,b\in A$. Then
   the triple  $(A,\star ,T)$ also forms an extended Rota-Baxter algebra of weight $(\mu,\lambda)$  and we denote it by $A_\star.$
   \end{prop}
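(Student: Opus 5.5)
We have to check two things: that $\star$ is associative, and that $T$ satisfies the extended Rota-Baxter identity of weight $(\mu,\lambda)$ with respect to $\star$. Throughout, the key identity is the defining relation rewritten as
\[
T(a\star b)=T(a)T(b)-\lambda ab, \qquad a,b\in A,
\]
which holds because $T(a\star b)=T(T(a)b+aT(b)+\mu ab)$.

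\textbf{Rota-Baxter identity for $\star$.} First compute the left-hand side with the key identity:
\[
T(a)\star T(b)=T(a)T^2(b)+T^2(a)T(b)+\mu T(a)T(b).
\]
On the right-hand side, the argument is $T(a)\star b+a\star T(b)+\mu\, a\star b$. Applying $T$ to each term gives
\[
T(T(a)\star b)=T^2(a)T(b)-\lambda T(a)b,\qquad T(a\star T(b))=T(a)T^2(b)-\lambda aT(b),
\]
\[
T(\mu\, a\star b)=\mu T(a)T(b)-\mu\lambda ab.
\]
Adding $\lambda\, a\star b=\lambda(aT(b)+T(a)b+\mu ab)$, the $\lambda$-terms cancel exactly. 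What remains agrees with the left-hand side, so the identity holds.

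\textbf{Associativity.} This is the step requiring the most bookkeeping. The first term expands as
\[
(a\star b)\star c=(a\star b)T(c)+T(a\star b)c+\mu(a\star b)c .
\]
Substituting $T(a\star b)=T(a)T(b)-\lambda ab$, it becomes
\[
aT(b)T(c)+T(a)bT(c)+\mu abT(c)+T(a)T(b)c-\lambda abc+\mu aT(b)c+\mu T(a)bc+\mu^2abc .
\]
Symmetrically, using $T(b\star c)=T(b)T(c)-\lambda bc$, one gets
\[
a\star(b\star c)=aT(b)T(c)-\lambda abc+T(a)bT(c)+T(a)T(b)c+\mu T(a)bc+\mu aT(b)c+\mu abT(c)+\mu^2abc .
\]
Comparing term by term using associativity of $A$, the two expressions coincide. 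Hence $(A,\star)$ is an associative algebra, and together with the first step, $(A,\star,T)$ is an extended Rota-Baxter algebra of weight $(\mu,\lambda)$. The only obstacle is careful bookkeeping of the $\lambda$- and $\mu$-terms, which the key identity handles uniformly.
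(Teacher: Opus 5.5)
Your proposal is correct and follows essentially the paper's argument: the paper checks the Rota--Baxter identity for $\star$ by the same direct expansion. The only difference is direction. The paper rewrites $T(a)T^2(b)$, $T^2(a)T(b)$, $\mu T(a)T(b)$ forward using the defining relation, whereas you apply that relation backward in the compact form $T(a\star b)=T(a)T(b)-\lambda ab$. The paper leaves associativity of $\star$ to the reader, and your eight-term comparison correctly supplies it.
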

   	\begin{proof}
   	Notice that $A_\star$ is an  algebra and we leave it to the reader.
   	We only show that $T$ is an extended Rota-Baxter operator on the  algebra $A_\star$.
   	
   	For any $a,b\in A$,
   	\begin{align*}
   	&T(a)\star T(b)\\
   	=& T(a)T^2(b)+T^2(a)T(b)+\mu T(a)T(b)\\
   	=&T\big(aT^2(b)+T(a)T(b)+\mu aT(b)\big)+\lambda aT(b)\\
   	+&T\big(T(a)T(b)+ T^2(a)b+\mu T(a)b\big)+\lambda T(a)b\\
   	+&\mu T\big( T(a)b+ aT(b)+\mu ab\big)+\lambda\mu ab\\
   \end{align*}
   On the other hand,
   \begin{align*}
   	&T\big(T(a)\star b+ a\star T(b)+\mu a\star b\big)+\lambda  a\star b\\
   	=&T\big(T(a)T(b)+ T^2(a)b+\mu  T(a)b\big)\\
   	+&T\big(aT^2(b)+T(a)T(b)+\mu aT(b)\big)\\
   	+&\mu T\big( T(a)b+ aT(b)+\mu ab\big)\\
   	+&\lambda \big( T(a)b+ aT(b)+\mu ab\big)\\
   	=&T(a)\star T(b)
   \end{align*}
   \end{proof}
   
   One can also construct new extended Rota-Baxter bimodules from old ones.
   \begin{prop}\label{descending property}
   	Let $(A,T)$ be an extended Rota-Baxter algebra of weight $(\mu,\lambda)$ and $(M,T_M)$ be an extended Rota-Baxter bimodule over it. Define a left action $``\rhd"$ and a right action $``\lhd"$ of $A$ on $M$ as follows: for any $a\in A,m\in M$,
   	\begin{eqnarray}
   		a\rhd m:&=& T(a)m-T_M(am),\\
   		m\lhd a:&=& mT(a)-T_M(ma).
   	\end{eqnarray}
   	Then these actions make $M$ into an extended Rota-Baxter bimodule over $A_\star $, denote this new  bimodule by $_\rhd M_\lhd$.
   \end{prop}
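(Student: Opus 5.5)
The claim has two parts. First, $\rhd$ and $\lhd$ make $M$ a bimodule over the associative algebra $A_\star$. Second, $T_M$ satisfies the two compatibility identities of Definition~\ref{Def: extended Rota-Baxter bimodules} relative to $(A_\star,T)$ and the new actions. The cleanest route is to reduce everything to Proposition~\ref{Prop: new RB algebra} applied to the semi-direct product of Proposition~\ref{Prop: trivial extension of extended Rota-Baxter bimodule}.

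Set $B=A\ltimes M$ with $T_B=T\oplus T_M$. By Proposition~\ref{Prop: trivial extension of extended Rota-Baxter bimodule}, $(B,T_B)$ is an extended Rota-Baxter algebra of weight $(\mu,\lambda)$. Proposition~\ref{Prop: new RB algebra} then makes $(B_\star,T_B)$ an extended Rota-Baxter algebra of the same weight. Next compute $\star$ on $B$ explicitly. For $a,b\in A$ and $m,n\in M$ we have
\[
(a,m)\star(b,n)=\bigl(a\star b,\; aT_M(n)+T(a)n+\mu an+mT(b)+T_M(m)b+\mu mb\bigr).
\]
In particular $M\star M=0$ and $A$ is a subalgebra of $B_\star$ equal to $A_\star$. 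Hence $B_\star$ is the semi-direct product of $A_\star$ with $M$, where $M$ carries the actions
\[
a\cdot_\star m:=aT_M(m)+T(a)m+\mu am,\qquad m\cdot_\star a:=mT(a)+T_M(m)a+\mu ma .
\]
Since $B_\star$ is associative and $M$ is a square-zero ideal, these actions form an $A_\star$-bimodule structure. Reading off the $M$-component of the extended Rota-Baxter identity for $T_B$ on $B_\star$ shows that $(M,T_M)$ is an extended Rota-Baxter bimodule over $(A_\star,T)$ with these actions.

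These actions are not yet the ones in the statement. Rewrite the bimodule axioms for $(M,T_M)$ over $(A,T)$:
\[
T(a)T_M(m)=T_M(a\cdot_\star m)+\lambda am,\qquad T_M(m)T(a)=T_M(m\cdot_\star a)+\lambda ma.
\]
This suggests a relation between $a\rhd m=T(a)m-T_M(am)$ and $a\cdot_\star m$. Directly, $a\cdot_\star m-a\rhd m=aT_M(m)+T_M(am)+\mu am$, which is not zero in general. So the plan must verify the statement's actions directly, using the structure above as a guide rather than an identification. I also considered $B$ with an operator like $-T-\mu$, but the twisted operator $\mu\,\mathrm{Id}-T_M$ is not obviously in the intended form, so I drop that route.

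The verification is done by hand. Associativity of the left action, $a\rhd(b\rhd m)=(a\star b)\rhd m$, expands as
\[
T(a)T(b)m-T(a)T_M(bm)-T_M(aT(b)m)+T_M(aT_M(bm)).
\]
Replace $T(a)T(b)$ by the extended Rota-Baxter identity and $T(a)T_M(bm)$ by the bimodule axiom. The $\lambda abm$ terms then cancel, and the $T_M(\cdots)$ terms recombine into $T(a\star b)m-T_M((a\star b)m)$. The right action and the middle associativity $(a\rhd m)\lhd b=a\rhd(m\lhd b)$ follow by the symmetric argument. The last check is the two compatibility identities for $T_M$ with $\rhd,\lhd$ over $(A_\star,T)$. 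For example, $T(a)\rhd T_M(m)=T^2(a)T_M(m)-T_M(T(a)T_M(m))$ must equal $T_M\bigl(a\rhd T_M(m)+T(a)\rhd m+\mu a\rhd m\bigr)+\lambda a\rhd m$. This is done by repeatedly applying the bimodule axioms to collapse products $T(\cdot)T_M(\cdot)$, in the same style as the proof of Proposition~\ref{Prop: new RB algebra}.

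I expect this last compatibility check to be the main obstacle. It has the most terms, including cross terms such as $T_M(T(a)T_M(m))$ that must be expanded twice, and I will organize it by grouping terms by how many $T$'s and $T_M$'s they contain.
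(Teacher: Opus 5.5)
Your proposal is correct. Once the semidirect-product detour is set aside (it produces the different action $a\cdot_\star m$ and is not needed), it is the same direct computation as the paper, which leaves the $A_\star$-bimodule axioms as an exercise; your expansion of $a\rhd(b\rhd m)$ supplies that part, and middle associativity goes through in the same way using one instance of each bimodule axiom, applied to $T_M(am)T(b)$ and to $T(a)T_M(mb)$. The compatibility check you flag as the main obstacle is actually short: rewrite $T^2(a)T_M(m)$ on the left by the bimodule axiom with $T(a)$ in place of $a$, rewrite the inner $T(a)T_M(m)$ on the right by the bimodule axiom, and both sides collapse to $T_M\big(T^2(a)m+\mu T(a)m\big)+\lambda T(a)m$.
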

   
   \begin{proof}
   	Firstly, notice that $_\rhd M_\lhd$ is a bimodule over  the algebra $(A,\star )$ and we leave it as an exercise.
   		
   	Finally, we show that $_\rhd M_\lhd$ is an extended Rota-Baxter bimodule over $A_\star$. That is, for any $a\in A$ and $m\in M$,  $$
   	\begin{array}{rcl} T(a)\rhd T_M(m)&=&T_M\big(a\rhd T_M(m)+T(a)\rhd m+\mu a\rhd m\big)+\lambda a\rhd m,\\
   		T_M(m)\lhd T(a)&=&T_M\big(m \lhd T(a)+T_M(m) \lhd a+\mu m\lhd a\big)+\lambda m\lhd a.\end{array}
   	$$
   	We only prove the first equality, the second being similar.
   	
   	In fact,
   	$$\begin{array}{rcl}
   		T(a)\rhd T_M(m)&=&T^2(a)T_M(m)-T_M(T(a)T_M(m))\\
   		&=&  T_M (T(a)T_M(m) + T^2(a)m +\mu  T(a)m )+\lambda T(a)m-T_M(T(a)T_M(m))  \\
   		&=& T_M\big(   T^2(a)m +\mu  T(a)m \big)+\lambda T(a)m 
   	\end{array}$$
   	and
   	$$\begin{array}{rl}
   		& T_M\big(a\rhd T_M(m)+T(a)\rhd m+\mu a\rhd m\big)+\lambda a\rhd m,\\
   		=&T_M\big( T(a)T_M(m)-T_M(aT_M(m))+ T^2(a)m-T_M(T(a)m)+\mu  T(a)m-\mu T_M(am)\big)+\lambda (T(a) m-T_M(am))\\
   		=& T_M\big( T_M\big(aT_M(m)+T(a)m+\mu am\big)+\lambda am -T_M(aT_M(m))+ T^2(a)m-T_M(T(a)m)\\
   		&+\mu  T(a)m-\mu T_M(am)\big)+\lambda (T(a) m-T_M(am))\\
   		=& T_M\big(   T^2(a)m +\mu T(a)m +\lambda am\big)+\lambda (T(a) m-T_M(am))\\
   		=& T(a)\rhd T_M(m).
   	\end{array}$$
   \end{proof}

	\bigskip
	
	 \section{The MC characterization and the cohomology of associative algebras and of extended Rota-Baxter operators}\ \label{sec 3}
	 
	  As part of extended Rota-Baxter algebras, we now provide a further characterization of associative algebras and extended Rota-Baxter operators.
	  
	  \subsection{Curved dg Lie algebras}\ \label{subsec 3.1}
	 
	 The controlling algebra of extended Rota-Baxter operators  is not a dg Lie algebra but a curved dg Lie algebra. So it is necessary to introduce it.
	 
	 \begin{defn}\label{curved dg Lie}
	 	\textbf{A curved dg Lie algebra} is a quadruple $(\frakg,d_0,d_1,d_2)$ where $\frakg$ is a graded space and three graded linear operators $d_0:\bfk \rightarrow \frakg,d_1:\frakg\rightarrow \frakg$ and $d_2:\frakg\otimes \frakg\rightarrow \frakg$  satisfy the following conditions:\\
	 	$(0) \quad d_2(x,y)=(-1)^{|x||y|} d_2(y,x)$,\\
	 	$(1) \quad d_1\circ d_0=0$,\\
	 	$(2) \quad d_1\circ d_1(x) +d_2(d_0(1), x)=0$,\\
	 	$(3)\quad d_1\circ d_2+d_2\circ (d_1\otimes Id+ Id\otimes d_1)=0$,\\
	 	$(4) \quad d_2(d_2(x,y),z)+(-1)^{|x||y|+|x||z|}d_2(d_2(y,z),x)+(-1)^{|y||z|}d_2(d_2(x,z),y)=0$ (Jacobi identity),\\
	 	for homogeneous elements $x,y,z\in \frakg$.
	 \end{defn}
	 
	 \begin{rmk}
	 			If $d_0=0$, it's just a dg Lie algebra. Moreover, if $d_0=0,d_1=0$, it's just a graded Lie algebra. Note that it  is slightly different from the definition of ordinary graded Lie algebras. 
	 		
	 		\textbf{An ordinary graded Lie algebra} is a pair $(\frakg,[-,-])$ where $\frakg$ is a graded space with the bracket $[-,-]:\frakg\otimes \frakg\rightarrow \frakg$ that satisfies the following conditions :\\
	 		$(1) \quad [x,y]=-(-1)^{|x||y|} [y,x]$,\\
	 		$(2) \quad [[x,y],z]+(-1)^{|x||y|+|x||z|}[[y,z],x]-(-1)^{|y||z|}[[x,z],y]=0$ (Jacobi identity),\\
	 		for homogeneous elements $x,y,z\in \frakg$.
	 		
	 		Given an ordinary graded  Lie algebra  $(\frakg,[-,-])$, then $(s\frakg,\pi)$ is a graded  Lie algebra where $\pi(sx,sy)=(-1)^{|x|}s[x,y]$ for  homogeneous elements $x,y\in \frakg$.
	 \end{rmk}
	 
	 \begin{defn}
	 	Given a curved dg Lie algebra $(\frakg,d_0,d_1,d_2)$. An element $\alpha\in \frakg^0$ is called a Maurer-Cartan element if and only if it satisfies the Maurer-Cartan equation:\\
	 	$$d_0(1)+d_1(\alpha)+\frac{1}{2}d_2(\alpha,\alpha)=0.$$
	 \end{defn}
	 
	 \begin{prop}[Twisting procedure] \label{Prop: twist curved dg Lie}
	 	Let  $\alpha$ be a Maurer-Cartan element of the curved dg Lie algebra $\frakg$. The twisted dg Lie algebra on $\frakg$  is given by $d_n ^{\alpha}: \frakg^{\ot n}\rightarrow \frakg$ for $n=1,2$ which is defined as follows$\colon$
	 	\begin{eqnarray*}\label{Eq: twisted curved dg Lie algebra}  \quad d^\alpha_1(x)=d_{1} (x)+d_2(\alpha,x),\quad d^\alpha_2(x,y)=d_2(x,y),\ \forall x,y\in \frakg.\end{eqnarray*}
	 	Denote $\mathcal{MC}(\frakg):=\{\mbox{Maurer-Cartan elements of}~ \frakg\}$.
	 \end{prop}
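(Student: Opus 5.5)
The claim is that $(\frakg, d_1^\alpha, d_2^\alpha)$, with $d_1^\alpha(x)=d_1(x)+d_2(\alpha,x)$ and $d_2^\alpha=d_2$, is a dg Lie algebra; that is, it is a curved dg Lie algebra whose curvature $d_0^\alpha$ is zero. The plan is to check the axioms of Definition~\ref{curved dg Lie} one at a time with $d_0^\alpha=0$. Axioms (0) and (4) involve only $d_2$, so they hold for $d_2^\alpha$ immediately, and axiom (1) is empty. The content lies in axioms (2) and (3). Throughout I use that $|\alpha|=0$, so that $d_2(\alpha,x)=d_2(x,\alpha)$ with no sign.

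\textbf{Axiom (3).} I would expand
\[
d_1^\alpha d_2(x,y)+d_2(d_1^\alpha x,y)+(-1)^{|x|}d_2(x,d_1^\alpha y),
\]
where the sign follows the Koszul convention implicit in $\Id\otimes d_1$, since $d_1$ has degree $1$. The $d_1$-terms cancel by the original axiom (3). What remains is
\[
d_2(\alpha,d_2(x,y))+d_2(d_2(\alpha,x),y)+(-1)^{|x|}d_2(x,d_2(\alpha,y)).
\]
This vanishes by the Jacobi identity (4) applied to the triple $(\alpha,x,y)$: because $\alpha$ has degree $0$, its signs reduce to $1$, $1$ and $(-1)^{|x||y|}$, and axiom (0) lets me move the entries into position.

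\textbf{Axiom (2).} I would compute
\[
d_1^\alpha d_1^\alpha(x)=d_1d_1x+d_1d_2(\alpha,x)+d_2(\alpha,d_1x)+d_2(\alpha,d_2(\alpha,x)).
\]
\begin{enumerate}[(a)]
\item By axiom (3) applied to $(\alpha,x)$, the middle two terms equal $-d_2(d_1\alpha,x)$.
\item By axiom (2), $d_1d_1x=-d_2(d_0(1),x)$.
\item By Jacobi with $x=y=\alpha$, $z=x$, one gets $d_2(\alpha,d_2(\alpha,x))=\tfrac12 d_2(d_2(\alpha,\alpha),x)$. The three Jacobi terms reduce to $A+B+B=0$, where $A=d_2(d_2(\alpha,\alpha),x)$ and $B=d_2(d_2(\alpha,x),\alpha)$, so $B=-A/2$.
\end{enumerate}
Summing, $d_1^\alpha d_1^\alpha(x)=-d_2\big(d_0(1)+d_1\alpha+\tfrac12 d_2(\alpha,\alpha),x\big)$, which is $0$ by the Maurer--Cartan equation.

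The main obstacle is keeping the Koszul signs consistent, in particular the sign convention for $d_2\circ(\Id\otimes d_1)$ and the symmetric (shifted) convention in axiom (0). I would fix these first and then verify the cancellations. As a consistency check, if instead one keeps the curvature $d_0^\alpha(1)=d_0(1)+d_1\alpha+\frac12 d_2(\alpha,\alpha)$, the same computation gives a curved dg Lie algebra for any $\alpha$, and being Maurer--Cartan is exactly the condition that this curvature vanishes.
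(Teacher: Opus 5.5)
The paper states this proposition without proof; it is recorded as the curved counterpart of the $L_\infty[1]$ twisting procedure of Section~4. So there is no argument in the paper to compare with. Your direct, axiom-by-axiom verification is the standard proof, and it is correct.

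It also cannot simply be quoted from the $L_\infty[1]$ twisting proposition. That proposition is stated only for operations $l_n$ with $n\geq 1$, so it does not handle the curvature $d_0(1)$. Your computation for axiom (2), giving $d_1^\alpha d_1^\alpha(x)=-d_2\bigl(d_0(1)+d_1\alpha+\tfrac12 d_2(\alpha,\alpha),x\bigr)$, supplies exactly that piece.

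Three small corrections:

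\textbf{Sign in step (c).} The displayed identity has the wrong sign. From $A+2B=0$ and $B=d_2(d_2(\alpha,x),\alpha)=d_2(\alpha,d_2(\alpha,x))$ (axiom (0) with $|\alpha|=0$), you get $d_2(\alpha,d_2(\alpha,x))=-\tfrac12 d_2(d_2(\alpha,\alpha),x)$. This is what your final sum actually uses, so the conclusion is unaffected.

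\textbf{Degree of $d_2$.} In the check of axiom (3), rewriting $(-1)^{|x||y|}d_2(d_2(\alpha,y),x)$ as $(-1)^{|x|}d_2(x,d_2(\alpha,y))$ requires $|d_2(\alpha,y)|=|y|+1$. That is, $d_2$ itself, not only $d_1$, must have degree $1$; this is also what makes $d_1^\alpha$ homogeneous of degree $1$. The Definition in Section~3.1 does not state degrees. They are forced by its conventions (graded-symmetric bracket, Maurer--Cartan elements in degree $0$), and they hold in the application where $d_2=l_3(s\pi,-,-)$. State this assumption explicitly at the start.

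\textbf{Closing remark.} For arbitrary $\alpha$ you do get a curved dg Lie algebra with curvature $d_0(1)+d_1\alpha+\tfrac12 d_2(\alpha,\alpha)$. But this also needs axiom (1), $d_1^\alpha\bigl(d_0^\alpha(1)\bigr)=0$. That is a separate short computation, using axioms (1)--(3) and the Jacobi identity for $(\alpha,\alpha,\alpha)$, not ``the same computation''.
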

 
     \medskip
    
    \subsection{The MC characterization and the cohomology of associative algebras}\ \label{subsec 3.2}
    \begin{defn}
    	Let $A$ be a vector space. Consider the graded space $\mathrm{Hom}(\T(A),A):=\oplus_{i\ge 0}\mathrm{Hom}(A^{\otimes i},A) $ and we say the degree  of $f$ is $m$ if $f\in \mathrm{Hom}(A^{\otimes m+1},A)$. The \textbf{partial Gerstenhaber composition} of two operators  $f$ and $g$ on position $i$ is defined to be$$f\bar{\circ}_i g(x_1,\dots,x_{m+n+1}):= f(x_1,\dots,x_{i-1},g(x_{i},\dots,x_{i+n}),x_{i+n+1},\dots,x_{m+n+1})$$ for $f\in \mathrm{Hom}(A^{\otimes m+1},A)$, $g\in \mathrm{Hom}(A^{\otimes n+1},A)$, $x_1,\dots,x_{m+n+1}\in A$ and $1\le i \le m+1$.
    	
    	 The \textbf{Gerstenhaber composition} of $f$ and $g$ is given by $f\bar{\circ} g=\sum_{i=1}^{m+1} (-1)^{(i-1)n}f\bar{\circ}_i g$ and the \textbf{Gerstenhaber bracket} $[-,-]_{\bf{G}}$ defined on $\mathrm{Hom}(\T(A),A)$ is given by $[f,g]_{\bf{G}}=f\bar{\circ} g-(-1)^{mn}g\bar{\circ} f$. See \cite{Ge1,Ge2} for more details.
    \end{defn}
    
    \begin{prop}\
    	
    	\begin{enumerate}
    		\item Let $A$ be a vector space. Then $(s\mathrm{Hom}(\T(A),A),[\cdot,\cdot])$ forms a graded Lie algebra where we define $[sx,sy]:=(-1)^{|x|}s[x,y]_{\bf{G}}$ for any $x\in \mathrm{Hom}(A^{\otimes p+1},A)$ and $y\in \mathrm{Hom}(A^{\otimes q+1},A)$.
    		\item An element $s\pi$ is a Maurer-Cartan element  of $s\mathrm{Hom}(\T(A),A)$ if and only if $\pi$ is an associative  product on $A$.
    	\end{enumerate}
    \end{prop}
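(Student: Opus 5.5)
Part (1) reduces to the classical fact that $(\mathrm{Hom}(\T(A),A),[-,-]_{\bf G})$ is an ordinary graded Lie algebra in the sense of the preceding Remark, i.e.\ graded antisymmetric with the graded Jacobi identity, where $f\in\mathrm{Hom}(A^{\otimes m+1},A)$ has degree $m$. Once we have this, the last paragraph of that Remark transports the structure to $s\mathrm{Hom}(\T(A),A)$ via $[sx,sy]=(-1)^{|x|}s[x,y]_{\bf G}$. It produces a graded Lie algebra in the sense of Definition~\ref{curved dg Lie} with $d_0=d_1=0$ and $d_2=[-,-]$. Two sign checks are needed. First, the symmetry condition (0): from $[x,y]_{\bf G}=-(-1)^{|x||y|}[y,x]_{\bf G}$ and $|sx|=|x|-1$, we have
$$(-1)^{|x|}[x,y]_{\bf G}=(-1)^{(|x|-1)(|y|-1)}(-1)^{|y|}[y,x]_{\bf G},$$
which is a direct parity computation. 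Second, the Jacobi identity (4) is translated by bookkeeping the Koszul signs. For the Gerstenhaber side, we would cite \cite{Ge1,Ge2}. The key input there is that $\bar\circ$ is a graded pre-Lie (right-symmetric) product: the associator $(f\bar\circ g)\bar\circ h-f\bar\circ(g\bar\circ h)$ is graded symmetric in $g,h$. This follows by splitting $(f\bar\circ g)\bar\circ h$ into the terms where $h$ is inserted inside $g$, which reproduce $f\bar\circ(g\bar\circ h)$, and the terms where $h$ is inserted into a slot of $f$ disjoint from $g$, which are symmetric in $g,h$ up to the Koszul sign. The commutator of a graded pre-Lie product then automatically satisfies antisymmetry and graded Jacobi.

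For part (2), take $\pi\in\mathrm{Hom}(A^{\otimes 2},A)$, so $|\pi|=1$ and $|s\pi|=0$. Since $d_0=d_1=0$, the Maurer--Cartan equation reduces to $\tfrac12[s\pi,s\pi]=0$, i.e.\ $-\tfrac12 s[\pi,\pi]_{\bf G}=0$. Now $[\pi,\pi]_{\bf G}=\pi\bar\circ\pi-(-1)^{1}\pi\bar\circ\pi=2\,\pi\bar\circ\pi$, and
$$\pi\bar\circ\pi(x,y,z)=\pi(\pi(x,y),z)-\pi(x,\pi(y,z)).$$
Since $\mathrm{char}\,\bfk=0$, the factor $2$ is invertible, so the equation holds exactly when $\pi$ is associative.

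The main obstacle is purely the sign bookkeeping in part (1): converting the ordinary Jacobi identity, with its $-(-1)^{|y||z|}$ term, into the form (4) with $+(-1)^{|y||z|}$ after the shift. We would first verify this on the general formula with $\epsilon=(-1)^{|x|}$ factors, using $(-1)^{|x|+|y|}$ to absorb the discrepancy. If that becomes messy, we would instead check it on the décalage isomorphism $\Lambda^2$ versus $S^2$ at the level of $s\otimes s$.
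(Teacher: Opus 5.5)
Your proposal is correct and follows the same route as the paper, whose proof is just the remark that $(\mathrm{Hom}(\T(A),A),[\cdot,\cdot]_{\bf G})$ is an ordinary graded Lie algebra by Gerstenhaber, combined with the shift construction $[sx,sy]=(-1)^{|x|}s[x,y]_{\bf G}$. Your symmetry check, your Jacobi bookkeeping (it does work out: the shifted Jacobi expression equals $(-1)^{|y|}s$ applied to the ordinary one), and your computation $[\pi,\pi]_{\bf G}=2\,\pi\bar\circ\pi$ with $\pi\bar\circ\pi(x,y,z)=\pi(\pi(x,y),z)-\pi(x,\pi(y,z))$ make explicit what the paper leaves implicit.
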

    
    \begin{rmk}
    	$(\mathrm{Hom}(\T(A),A),[\cdot,\cdot]_{\bf{G}})$ is an ordinary graded Lie algebra by \cite{Ge1}. It follows that the above proposition holds. 
    \end{rmk}

    Let $(A,\pi)$ be an associative algebra. Then $s\pi$  is a Maurer-Cartan element of $(s\mathrm{Hom}(\T(A),A),[\cdot,\cdot])$, hence twisting by $s\pi$, we have a new dg Lie algebra. 
    \begin{defn}\label{cohomology of algebra}
    	Let $(A,\pi)$ be an associative algebra. \textbf{The cochain complex of $A$} is defined to be $(\mathrm{Hom}(\T(A),A),\partial_{\Alg}:=[\pi,-]_{\bf{G}})$ induced by the new dg Lie algebra above. The corresponding cohomology is called \textbf{the cohomology of $A$}.  We could see that they are just the Hochschild cochain complex and Hochschild cohomology of $A$. More precisely,\\
    	
    	for   $x_1,\dots,x_{n}\in A$ and $f\in \Hom(A^{\otimes n-1},A)$, we have
    	\begin{eqnarray*}
    		&&\partial_{\Alg}(f)(x_1, \dots, x_{n})\\
    		&=&(-1)^n x_1 f(x_2,\dots, x_n)+f(x_1,\dots x_{n-1}) x_n\\
    		&&+\sum_{i=1}^{n-1} (-1)^{i+n}f( x_1,\dots,x_{i-1},x_i x_{i+1}, \dots,x_n).
    	\end{eqnarray*}
    \end{defn}

    \medskip
    
    \subsection{The MC characterization and the cohomology of extended Rota-Baxter operators}\
    
    Let $(A,\pi)$ be an algebra.
    Define three graded linear operators as follows, for $f\in\mathrm{Hom}(A^{\otimes n+1},A), g\in \mathrm{Hom}(A^{\otimes m+1},A)$,
    
    $d_0:\bfk \rightarrow \mathrm{Hom}(\T(A),A)$ is given by $d_0(1)=-\lambda \pi$,\\
    $d_1=d:\mathrm{Hom}(\T(A),A)\rightarrow \mathrm{Hom}(\T(A),A)$ is given by $d_1(f)=(-1)^{|f|+1}\mu f\bar{\circ} \pi$,\\
    and $d_2=[-,-]:\mathrm{Hom}(\T(A),A)\otimes \mathrm{Hom}(\T(A),A)\rightarrow \mathrm{Hom}(\T(A),A)$ is given by \\
    $d_2(f,g)(x_1,\dots,x_{m+n+2})=(-1)^{(n+1)m}\pi(f(x_{1},\dots,x_{n+1}),g(x_{n+2},\dots,x_{n+m+2}))$\\
    $+(-1)^{n}\pi(g(x_{1},\dots,x_{m+1}),f(x_{m+2},\dots,x_{n+m+2}))$\\
    $-\sum_{i=1}^{m}(-1)^{(n+1)(m+i-1)} g(x_1,\dots,x_{i-1},\pi(f(x_{i},\dots,x_{i+n}),x_{i+n+1}),\dots,x_{n+m+2})$\\
    $-\sum_{i=1}^{m}(-1)^{(n+1)(m+i-1)+n} g(x_1,\dots,x_{i-1},\pi(x_i,f(x_{i+1},\dots,x_{i+n+1})),\dots,x_{n+m+2})$\\
    $-\sum_{i=1}^{n}(-1)^{n+(m+1)(i-1)} f(x_1,\dots,x_{i-1},\pi(g(x_{i},\dots,x_{i+m}),x_{i+m+1}),x_{i+m+2},\dots,x_{n+m+2})$\\
    $-\sum_{i=1}^{n}(-1)^{n+(m+1)(i-1)+m} f(x_1,\dots,x_{i-1},\pi(x_i,g(x_{i+1},\dots,x_{i+m+1})),x_{i+m+2},\dots,x_{n+m+2})$.

    \begin{thm}
    	With the above notation, then $(\mathrm{Hom}(\T(A),A),d_0,d, [-,-])$ forms a curved dg Lie algebra.
    \end{thm}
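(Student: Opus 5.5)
The plan is to obtain the curved dg Lie structure as a \emph{derived bracket} in the sense of Voronov \cite{Vor05}, instead of checking identities (0)--(4) of Definition~\ref{curved dg Lie} by hand. Let $\epsilon$ be a formal symbol and consider the vector space $\widehat{A}:=A\oplus M$ with $M:=A\epsilon$, a second copy of $A$. Define a product on $\widehat{A}$ by
$$(a+m\epsilon)(b+n\epsilon)=ab-\lambda mn+(an+mb+\mu mn)\epsilon .$$
This is just $A\otimes_{\bfk}\bfk[\epsilon]/(\epsilon^2-\mu\epsilon+\lambda)$, the tensor product of an associative algebra with a commutative one. So the product, call it $\Psi$, is associative, and $s\Psi$ is a Maurer--Cartan element of the graded Lie algebra $L:=s\mathrm{Hom}(\T(\widehat A),\widehat A)$ from Subsection~\ref{subsec 3.2}. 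We split
$$\Psi=\widehat\pi+\mu\,\pi_M-\lambda\,\pi_{M\to A},$$
where the three pieces are:
\begin{itemize}
\item $\widehat\pi$ is the semidirect product of $A$ with its regular bimodule;
\item $\pi_M:M\otimes M\to M$ is the product of $A$, transported to $M$;
\item $\pi_{M\to A}:M\otimes M\to A$ is the product of $A$, with output in $A$.
\end{itemize}

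Next I take $\mathfrak a:=s\mathrm{Hom}(\T(M),A)$, identified with $s\mathrm{Hom}(\T(A),A)$ by $f\mapsto \hat f$, where $\hat f$ eats inputs from $M$, outputs in $A$, and vanishes on any input from $A$. Since such maps can never be composed with one another, $\mathfrak a$ is an abelian subalgebra of $L$. Let $P:L\to\mathfrak a$ be the projection killing every component with an input in $A$ or an output in $M$. Its kernel is a subalgebra, so $(L,\mathfrak a,P,s\Psi)$ is a V-datum. Voronov's theorem then gives an $L_\infty[1]$-structure on $\mathfrak a$ with
$$l_0=P(\Psi),\qquad l_k(\hat f_1,\dots,\hat f_k)=P[\cdots[[\Psi,\hat f_1],\hat f_2],\dots,\hat f_k],$$
together with a curvature term. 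Each of the summands of $\Psi$ is binary and each $\hat f_i$ has output in $A$. Iterated brackets containing three or more $\hat f_i$ therefore always need to feed an $A$-valued output into an $M$-slot of $\Psi$ twice, which $P$ kills. Hence $l_k=0$ for $k\ge 3$, and an $L_\infty[1]$-algebra with only $l_0,l_1,l_2$ is, after undoing the shift, exactly a curved dg Lie algebra.

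It remains to identify the three operations.
\begin{itemize}
\item $l_0=P(\Psi)=-\lambda\pi_{M\to A}$, which matches $d_0(1)=-\lambda\pi$.
\item In $[\Psi,\hat f]$, the terms coming from $\widehat\pi$ and from $\pi_{M\to A}$ have at least one input in $A$, or are of the form $\pi_{M\to A}(\hat f,-)$, which requires an $M$-input for $\hat f$'s output. In every case they are killed by $P$. The only surviving term is $\mu\,\hat f\bar\circ\pi_M$, so $l_1(f)=\pm\mu f\bar\circ\pi$, which matches $d_1$.
\item For $l_2$, the surviving terms of $[[\Psi,\hat f],\hat g]$ are of four kinds: $\pi(f(\dots),g(\dots))$ and $\pi(g(\dots),f(\dots))$, coming from $\widehat\pi$ with both $A$-slots filled; and $g(\dots,\pi(f(\dots),x_{i+n+1}),\dots)$ and $g(\dots,\pi(x_i,f(\dots)),\dots)$, coming from the bimodule action of $A$ on $M$ inside $\widehat\pi$ and then plugged into $g$. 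Together with the symmetric terms with $f$ and $g$ exchanged, these are precisely the six sums defining $d_2$.
\end{itemize}

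The main obstacle is the bookkeeping of signs. I would have to check that the Koszul signs from the Gerstenhaber bracket, the shift $[sx,sy]=(-1)^{|x|}s[x,y]_{\bf G}$, and the décalage between $L_\infty[1]$ and curved dg Lie conventions combine to the exact signs $(-1)^{(n+1)m}$, $(-1)^n$, $(-1)^{(n+1)(m+i-1)}$, etc.\ written in the definition of $d_2$. The same check applies to the sign $(-1)^{|f|+1}$ in $d_1$. If some global sign disagrees, I would rescale, e.g.\ replace $\Psi$ by $-\Psi$ or twist the identification $f\mapsto\hat f$ by a sign depending on arity. As a fallback, I would verify identities (1)--(4) directly in low arity, $n,m\le 1$, to pin down the conventions; the derived-bracket argument then gives them in general.
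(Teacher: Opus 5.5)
Your argument is correct, and it takes a genuinely different route from the paper. The paper never checks (0)--(4) for $\Hom(\T(A),A)$ directly. It deduces the statement as a corollary of its main $L_\infty[1]$-structure on $s\mathfrak m\oplus\mathfrak a$, the one controlling $\pi$ and $T$ simultaneously, by putting $s\pi$ in the first slot: $d_0(1)=l_1(s\pi)$, $d_1=l_2(s\pi,-)$, $d_2=l_3(s\pi,-,-)$. This is in effect a twist by $(s\pi,0)$, which is not Maurer--Cartan since $l_1(s\pi)=-\lambda\pi$; that is where the curvature comes from. All identities are then inherited from the generalized Jacobi identities of the big algebra, which the paper verifies by the long cancellation argument of its Appendix, driven by $A_mA_n+A_{m+n+1}=A_{m+1}B_{n+1}$.

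You instead get everything from the single identity $[\Psi,\Psi]_{\mathbf G}=0$, i.e.\ associativity of $A\otimes\bfk[\epsilon]/(\epsilon^2-\mu\epsilon+\lambda)$, together with Voronov's theorem. As a bonus this explains the Maurer--Cartan characterization: the graph $\{T(m)+m\epsilon\}$ is a subalgebra exactly when $T$ is an extended Rota--Baxter operator. It also explains why the polynomial $r^2-\mu r+\lambda$ of the paper's last section appears.

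The paper's route gives more, namely the structure for the pair $(\pi,T)$. Your idea appears to extend to that as well. The map $f\mapsto f\otimes m_C^{(n+1)}$, with $m_C$ the product of $C=\bfk[\epsilon]/(\epsilon^2-\mu\epsilon+\lambda)$, is a morphism of Gerstenhaber Lie algebras. Derived brackets for the pair then produce the paper's coefficients $-A_n$, $-A_{n-i+1}$ and $B_{n-i+3}$ through $\epsilon^k=B_k\epsilon-A_{k-1}$, which also accounts for the convention $A_{-1}=-1$.

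A few points need tightening.

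\emph{Where to apply Voronov.} Apply it to the unshifted graded Lie algebra $(\Hom(\T(\widehat A),\widehat A),[-,-]_{\mathbf G})$, in which $\Psi$ is odd. In the paper's $s\Hom(\T(\widehat A),\widehat A)$, the element $s\Psi$ has degree $0$ and the bracket is symmetric of degree $1$, so the theorem does not apply there. Working unshifted, the brackets live on $\Hom(\T(M),A)\cong\Hom(\T(A),A)$ with exactly the grading of the statement. The paper's curved dg Lie algebras are already in the symmetric, degree-$1$ convention, so there is no shift to undo.

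\emph{Not a V-datum in the strict sense.} Here $\Psi\notin\ker P$, since $P\Psi=-\lambda\pi$. So this is not a V-datum in the sense that requires $\Delta\in\ker P$. You need Voronov's original version, which allows the $0$-ary bracket $P\Delta$. With $l_k=0$ for $k\ge 3$, the generalized Jacobi identities with $0,1,2,3$ inputs are precisely (1)--(4).

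\emph{Vanishing of $l_k$ for $k\ge 3$.} The cleaner reason is this. Every edge of a nonzero tree must touch $\Psi$, so $k\le 3$. For $k=3$, the value $\Psi(a,b)=ab\in A$ is fed into an $\hat f$, which vanishes on $A$-inputs. These terms therefore vanish even before applying $P$.

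\emph{Signs.} No rescaling is needed. For $f\in\Hom(A^{\otimes n+1},A)$ one gets $P[\Psi,\hat f]_{\mathbf G}=-(-1)^{n}\mu\, f\bar{\circ}\pi$, which is $d_1$. Likewise $P[[\Psi,\hat f]_{\mathbf G},\hat g]_{\mathbf G}$ reproduces the six sums of $d_2$ with exactly the printed signs. This holds provided the four sums run over all $m+1$ (respectively $n+1$) slots. The printed upper limits $m$ and $n$ must be a typo: for $n=m=0$ they would give $\tfrac12[T,T]=\pi\circ(T\otimes T)$, which is incompatible with the paper's own Maurer--Cartan characterization of extended Rota--Baxter operators.
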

    \begin{rmk}
    	The above theorem can be proved by direct computation. Here we will give an alternative proof in Corollary \ref{curved dg Lie of operator} by using
    	Theorem \ref{thm: Linfinity for ass case}.
    \end{rmk}

    \begin{thm}\label{MC elements ass}
    	With the above notation.  Let $T\in\mathrm{Hom}(A,A)$, then
    	$T\in \mathcal{MC}(\mathrm{Hom}(\T(A),A))$ if and only if $T$ is  an extended Rota-Baxter operator of weight  $(\mu,\lambda)$ on $A$.
    \end{thm}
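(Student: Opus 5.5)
The plan is to evaluate the Maurer-Cartan equation
$$d_0(1)+d(T)+\tfrac12[T,T]=0$$
directly on an element $T\in\mathrm{Hom}(A,A)$, which has degree $0$ (so $n=m=0$ in the formulas above), and to compare the resulting bilinear map with the defining identity \eqref{eq:diffwt}. All three terms lie in $\mathrm{Hom}(A^{\otimes 2},A)$, i.e.\ in degree $1$, so the MC equation is an equality of bilinear maps. It therefore suffices to evaluate each term on an arbitrary pair $(x_1,x_2)\in A\otimes A$.

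First I compute the two lower terms. By definition, $d_0(1)(x_1,x_2)=-\lambda\, x_1x_2$. For $d(T)$, the sign is $(-1)^{|T|+1}=-1$, and the Gerstenhaber composition $T\bar{\circ}\pi$ has only the single position $i=1$ because $T$ is unary. Hence $d(T)(x_1,x_2)=-\mu\, T(x_1x_2)$.

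Next I compute $[T,T]$ from the explicit formula for $d_2$ with $f=g=T$ and $n=m=0$. The two product terms carry the signs $(-1)^{0}$ and $(-1)^{0}$, so they contribute $2\,T(x_1)T(x_2)$. The insertion terms correspond to the admissible positions where $\pi(T(\cdot),\cdot)$ or $\pi(\cdot,T(\cdot))$ is plugged into the unary map $T$. As I read the formula, the summation ranges over all admissible insertion positions $i=1,\dots,m+1$ (resp.\ $1,\dots,n+1$); for a unary outer map this is exactly the position $i=1$, and I will make this indexing explicit in the write-up. With all signs equal to $-1$ when $n=m=0$, these terms contribute $-2\,T(T(x_1)x_2)-2\,T(x_1T(x_2))$. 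Thus
$$\tfrac12[T,T](x_1,x_2)=T(x_1)T(x_2)-T(T(x_1)x_2)-T(x_1T(x_2)).$$

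Adding the three pieces, the MC equation reads
$$T(x_1)T(x_2)-T\big(T(x_1)x_2+x_1T(x_2)+\mu x_1x_2\big)-\lambda x_1x_2=0$$
for all $x_1,x_2\in A$. This is precisely \eqref{eq:diffwt}, and both implications follow at once. The only real obstacle is bookkeeping of the signs $(-1)^{(n+1)(m+i-1)}$ etc.\ at $n=m=0$ and the correct range of the insertion sums, since an incorrect range would drop the $T(T(x_1)x_2)$ and $T(x_1T(x_2))$ terms. I would double-check this step against the known weight-$(\lambda,0)$ Rota-Baxter case.
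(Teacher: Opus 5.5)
Your proposal is correct and is essentially the paper's argument: you write the MC equation as $-\lambda\pi-\mu\, T\circ\pi+\frac12[T,T]=0$ and evaluate it on $(x_1,x_2)$ to recover \eqref{eq:diffwt}. Your caution about the insertion sums is justified, since the printed ranges $\sum_{i=1}^{m}$ and $\sum_{i=1}^{n}$ in $d_2$ are empty at $n=m=0$ and must be read as running to $m+1$ and $n+1$; the paper sidesteps this by identifying $\frac12[T,T]$ with $\frac12 l_3(s\pi,T,T)=\pi(T\otimes T)-T\pi(\Id\otimes T)-T\pi(T\otimes\Id)$, as computed in the proof of Theorem~\ref{Thm: MC elements in ex Linifnity}.
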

    \begin{proof}
    	\begin{equation*}
    		\begin{aligned}
    			0 &= 	d_0(1)+d(T)+\frac{1}{2}[T,T]  \\
    			&=  -\lambda \pi-\mu T\circ \pi+ \frac{1}{2} [T,T].
    		\end{aligned}
    	\end{equation*}
    	This equation of $T$ coincides with the one obtained in the proof of Theorem \ref{Thm: MC elements in ex Linifnity}
    \end{proof}
    
    By Theorem~\ref{MC elements ass}, an extended Rota-Baxter operator $T$ is a Maurer-Cartan element in the curved dg Lie algebra $\mathrm{Hom}(\T(A),A)$. Twisting $\mathrm{Hom}(\T(A),A)$ by  $T$ yields a new differential.   
    
    \begin{prop-def}\label{cohomology of operator}
    	Consider an algebra $A$. \textbf{The cochain complex of the extended Rota-Baxter operator  $T$} is defined to be $(\mathrm{Hom}(\T(A),A),\partial_\RB:=d_{1}^{T})$. The corresponding  cohomology group is called \textbf{the cohomology of the extended Rota-Baxter operator  $T$}. One may observe that this complex (and cohomology group) coincides with the Hochschild cochain complex (and cohomology group)  of $A_\star$ with coefficients in $_\rhd A_\lhd$(See Proposition  \ref{Prop: new RB algebra} and \ref{descending property}).
    \end{prop-def}

    \begin{proof}
    	For   $x_1,\dots,x_{n}\in A$ and $f\in \Hom(A^{\otimes n-1},A)$, we have
    	\begin{eqnarray*}
    		&&(d_1(f)+d_2(T, f))(x_1, \dots, x_{n})\\
    		&=&\sum_{i=1}^{n-1} (-1)^{i+n}\mu f( x_1,\dots,x_ix_{i+1} \dots,x_n)+(-1)^n T(x_1) f(x_2,\dots, x_n)\\
    		&& +f(x_1,\dots, x_{n-1})T(x_n)-(-1)^{n}T(x_1 f(x_2,\dots x_n))-T(f(x_1,\dots x_{n-1})x_n)\\
    		&&+\sum_{i=1}^{n-1} (-1)^{i+n}f(x_1,\dots,(T(x_i)x_{i+1}+x_iT(x_{i+1})), \dots,x_n)\\
    		&=&(-1)^n x_1 \rhd f(x_2,\dots, x_n)+f(x_1,\dots x_{n-1})\lhd x_n\\
    		&&+\sum_{i=1}^{n-1} (-1)^{i+n}f( x_1,\dots,x_{i-1},x_i\star x_{i+1}, \dots,x_n)\\
    	\end{eqnarray*}
    	induces a new differential on $\mathrm{Hom}(\T(A),A)$ which differs from the usual Hochschild cochain complex. And  they are just the Hochschild cochain complex and cohomology of $A_\star$ with coefficient in $_\rhd A_\lhd$. 
    \end{proof}

   \bigskip

 \section{$L_\infty[1]$-algebras}\ \label{Subsect: Linfinity algebras}\label{sec 4}
 
 In this section, we  recall some preliminaries on $L_\infty[1]$-algebras. It is well known that $L_\infty[1]$-algebras are equivalent to $L_\infty$-algebras. Consequently, all results parallel to those for $L_\infty$-algebras hold. For $L_\infty$- algebras, see \cite{Get09} for more details.
 
 Let $V=\oplus_{n\in \mathbb{Z}} V^n$ be a graded vector space. Recall that   the graded symmetric algebra $\rmS(V)$ of $V$ is defined to be the quotient of the tensor algebra $\T(V)$ by   the two-sided ideal $I$   generated by
 $x\ot y -(-1)^{|x||y|}y\ot x$ for all homogeneous elements $x, y\in V$. For $x_1\ot\cdots\ot x_n\in V^{\ot n}\subseteq \T(V)$, write $ x_1\odot x_2\odot\dots\odot x_n$ for its image in $\rmS(V)$.
 For homogeneous elements $x_1,\dots,x_n \in V$ and $\sigma\in S_n$ which is  the symmetric group in $n$ variables, the Koszul sign $\varepsilon(\sigma):=\varepsilon(\sigma;  x_1,\dots, x_n)$ is defined by
 $$ x_1\odot x_2\odot\dots\odot x_n=\varepsilon(\sigma)x_{\sigma(1)}\odot x_{\sigma(2)}\odot\dots\odot x_{\sigma(n)}\in \rmS(V).$$
 Denote by $\rmS^n(V)$ the image of $V^{\otimes n}$ in $\rmS(V)$.
 
 Let  $n\geq 1$.
 For $0\leq i_1, \dots, i_r\leq n$ with $i_1+\cdots+i_r=n$,  $\Sh(i_1, i_2,\dots,i_r)$ is the   set of $(i_1,\dots, i_r)$-shuffles, i.e., the permutation $\sigma\in S_n$ such that
 $$\sigma(1)<\sigma(2)<\dots<\sigma(i_1),  \ \sigma(i_1+1)< \dots<\sigma(i_1+i_2),\ \dots,\
 \sigma(i_1+\cdots+i_{r-1}+1)< \cdots<\sigma(n).$$
 
 \smallskip
 \begin{defn}\label{Def:L[1]-infty}
 	An \textbf{$L_\infty[1]$-algebra} is a graded vector space  $\frakg=\bigoplus\limits_{i\in\mathbb{Z}}\frakg^i$   equipped with   a family of graded linear maps $l_n:\frakg^{\ot n}\rightarrow \frakg, n\geq 1$ of degree $1$ satisfying  the following equations:
 	for arbitrary  $n\geq 1$,  $ \sigma\in \rmS_n$ and $x_1,\dots, x_n\in \frakg$,
 	\begin{enumerate}
 		\item[(i)](graded symmetry)
 		\begin{equation*} \label{graded sym}
 			l_n(x_{\sigma(1)},\dots,x_{\sigma(n)})=\varepsilon(\sigma)l_n(x_1,\dots,x_n),
 		\end{equation*}

 		\item[(ii)](generalised Jacobi identity)
 		\begin{equation*}\label{graded Jacobi}
 			\sum_{i=1}^n\sum_{\sigma\in \Sh(i,n-i)}\varepsilon(\sigma)l_{n-i+1}(l_i(x_{\sigma(1)},\dots,x_{\sigma(i)}),x_{\sigma(i+1)},\dots,x_{\sigma(n)})=0.
 		\end{equation*}
 		
 	\end{enumerate}
 	
 \end{defn}

	\begin{rmk} \label{Rem: L[1]-infinity for small n}   Let us consider the generalised Jacobi identity for   $n\leq 3$ with the assumption of  generalised  symmetry.
		
		\begin{enumerate}
			\item[(i)]  For $n=1$,  then $l_1 \circ l_1 =0$, that is,  $l_1 $ is a differential.

			\item[(ii)] For $n=2$, then $l_1\circ l_2 +l_2 \circ (l_1 \ot\Id+\Id\ot l_1)=0$, that is , $l_1$ is a derivation with respect to $l_2$.

			\item[(iii)] For $n=3$ and arbitrary homogeneous elements $x_1, x_2, x_3\in \frakg$, we have
			$$\begin{array}{ll} &l_2 (l_2 (x_1\ot x_2)\ot x_3)+(-1)^{|x_1|(|x_2|+|x_3|)} l_2 (l_2 (x_2\ot x_3)\ot x_1)+
				(-1)^{|x_3|(|x_1|+|x_2|)} l_2 (l_2 (x_3\ot x_1)\ot x_2)
				\\
				=&-\Big(l_1 (l_3 (x_1\ot x_2\ot x_3))+ l_3 (l_1  (x_1)\ot x_2\ot x_3 )+(-1)^{|x_1|} l_3 (x_1\ot l_1  (x_2)\ot x_3 )+\\
				&(-1)^{|x_1|+|x_2|} l_3 (x_1\ot x_2\ot l_1  (x_3) )\Big),\end{array}$$
			that is, $l_2$ satisfies the   Jacobi identity up to homotopy.
		\end{enumerate}

	\end{rmk}

	\begin{defn}
		A \textbf{Maurer-Cartan element} of an $L_\infty[1]$-algebra $(\frakg,\{l_n \}_{n\geq1})$ is  an element $\alpha\in \frakg^{0}$   satisfying the Maurer-Cartan equation:
		\begin{eqnarray*}\label{Eq: mc-equation[1]}\sum_{n=1}^\infty\frac{1}{n!} l_n (\alpha^{\ot n})=0,\end{eqnarray*}
		whenever this infinite sum exists.  Denote $\mathcal{MC}(\frakg):=\{\mbox{Maurer-Cartan elements of}~ \frakg\}$.
	\end{defn}

	\begin{prop}[Twisting procedure] \label{Prop: twist-L-infty[1]}
		Let  $\alpha$ be a Maurer-Cartan element of $L_\infty[1]$-algebra $\frakg$. The twisted $L_\infty[1]$-algebra  is given by $l_n ^{\alpha}: \frakg^{\ot n}\rightarrow \frakg$ which is defined as follows$\colon$
		\begin{eqnarray*}\label{Eq: twisted L[1] infinity algebra} l^\alpha_n(x_1\ot \cdots\ot x_n)=\sum_{i=0}^\infty\frac{1}{i!}l_{n+i} (\alpha^{\ot i}\ot x_1\ot \cdots\ot x_n),\ \forall x_1, \dots, x_n\in \frakg,\end{eqnarray*}
		whenever these infinite sums exist.
	\end{prop}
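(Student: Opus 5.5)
The claim to prove is the twisting procedure: if $\alpha$ is a Maurer-Cartan element of $(\frakg,\{l_n\})$ and the sums converge, then $\{l_n^\alpha\}$ is again an $L_\infty[1]$-structure on $\frakg$. I would argue in the coalgebra language. An $L_\infty[1]$-structure is the same thing as a degree $1$ coderivation $D=\sum_n \widetilde{l_n}$ of the cofree cocommutative coalgebra $\bar{\rmS}^c(\frakg)$ with $D^2=0$. Here $\widetilde{l_n}$ denotes the unique coderivation extending $l_n:\rmS^n(\frakg)\to\frakg$. In this language, graded symmetry (i) is automatic because each $l_n$ is defined on $\rmS^n(\frakg)$. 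The generalised Jacobi identity (ii) is exactly the statement that the projection of $D^2$ to $\frakg$ vanishes, since a square of an odd coderivation is again a coderivation and is determined by its corestriction.

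First step: pass to the completed coalgebra $\rmS^c(\frakg)$ including the counit part, and introduce the group-like element $e^\alpha=\sum_{i\ge0}\frac{1}{i!}\alpha^{\odot i}$. Define the coalgebra automorphism $\Phi_\alpha(x_1\odot\cdots\odot x_n)=e^\alpha\odot x_1\odot\cdots\odot x_n$, i.e.\ multiplication by $e^\alpha$, whose inverse is multiplication by $e^{-\alpha}$. Since $\alpha$ has degree $0$, $\Phi_\alpha$ preserves degrees and no extra Koszul signs appear. The conjugate $D^\alpha:=\Phi_\alpha^{-1}\circ D\circ\Phi_\alpha$ is then a coderivation with $(D^\alpha)^2=0$.

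Second step: compute the corestriction of $D^\alpha$. For $x_1\odot\cdots\odot x_n$ it is $\sum_i \frac1{i!}l_{n+i}(\alpha^{\odot i}\odot x_1\odot\cdots\odot x_n)$, which is exactly $l_n^\alpha$. Because $\Phi_\alpha^{-1}$ followed by projection to $\frakg$ is just projection up to the constant term, the only new feature is a possible component $l_0^\alpha=\sum_i\frac1{i!}l_i(\alpha^{\otimes i})$ from $\bfk\to\frakg$. This is precisely the Maurer-Cartan expression, so it vanishes by hypothesis. Hence $D^\alpha$ restricted to $\bar{\rmS}^c(\frakg)$ is a genuine (uncurved) square-zero coderivation whose components are the $l_n^\alpha$, and (i),(ii) follow.

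Alternatively, I would present a direct check: expand the Jacobi sum for $l^\alpha$, collect the terms containing $\alpha^{\otimes k}$, and identify them with $\frac{1}{k!}$ times the original Jacobi identity applied to $(\alpha,\dots,\alpha,x_1,\dots,x_n)$. The discrepancy consists of terms where an inner $l_i$ eats only $\alpha$'s, and these assemble into $l_{\ast}(\mathrm{MC}(\alpha),\ldots)=0$. The main obstacle in this route is the combinatorics: showing that shuffles of $(\alpha^{k},x)$ distribute with the correct multiplicities $\binom{k}{j}$, which cancel the factorials. Sign issues are harmless since $|\alpha|=0$. Convergence is assumed (e.g.\ via a complete filtration or nilpotence) as in the statement.
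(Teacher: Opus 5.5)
Your argument is correct. The paper has no proof of its own to match: the twisting procedure is only recalled among the preliminaries on $L_\infty[1]$-algebras, with a pointer to Getzler. What you supply is the standard coalgebraic proof that the paper takes for granted.

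Two facts carry your argument and deserve to be stated explicitly:
\begin{enumerate}
\item Left multiplication by $e^\alpha$ is a coalgebra automorphism, because $\rmS(\frakg)$ is a bialgebra and $e^\alpha$ is group-like: $\Delta(e^\alpha)=e^\alpha\otimes e^\alpha$ and $\varepsilon(e^\alpha)=1$.
\item Every coderivation satisfies $\varepsilon\circ D=0$. This is precisely why $\mathrm{pr}_{\frakg}\circ\Phi_\alpha^{-1}\circ D=\mathrm{pr}_{\frakg}\circ D$, i.e.\ why ``projection up to the constant term'' loses nothing.
\end{enumerate}
Likewise, $D^\alpha(1)=\Phi_\alpha^{-1}D(e^\alpha)$ is exactly the Maurer--Cartan expression, since $D(e^\alpha)=\bigl(\sum_k\frac{1}{k!}l_k(\alpha^{\otimes k})\bigr)\odot e^\alpha$. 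Its vanishing is what lets $D^\alpha$ restrict to the reduced coalgebra.

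Compared with the direct check you sketch as an alternative, the coalgebra route removes all the shuffle and binomial-multiplicity bookkeeping. The price is a heavier reliance on completions. ``Whenever the sums exist'' does not by itself let you:
\begin{itemize}
\item define $\Phi_\alpha$;
\item apply $D$ and $\Delta$ to infinite sums;
\item pull $\sum_j\frac{1}{j!}l_j(\alpha^{\otimes j})$ inside another $l_k$ (the direct check needs this step too).
\end{itemize}
A rigorous version should therefore assume a complete filtration with respect to which the $l_n$ are continuous, or work over $\bfk[[t]]$.

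For the paper's purposes this is harmless. The components $l_i(sf,\xi_1,\dots,\xi_{i-1})$ vanish once $i$ exceeds the arity of $f$ by more than one, so every sum arising from twisting by $(s\pi,T)$ is finite.
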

	
    The following discussion concerns the relationship between two $L_\infty[1]$-algebras under a strict  $L_\infty[1]$-morphism.
    
   \begin{defn}
   	\textbf{A strict $L_\infty[1]$-morphism} $f$ between $L_\infty[1]$-algebras $(\frakg,\{l_i\})$ and $(\frakh,\{L_i\})$ is a  linear map $f: \frakg \to \frakh$ of degree $0$ such that
   	$$f_1(l_n(x_1,\dots,x_n))=L_n(f_1(x_1),\dots,f_1(x_n))$$
   		for all $n \ge 1$ and homogeneous elements $x_1,\dots, x_n\in \frakg$.
   	\end{defn}

	We then have the following observation:
	\begin{prop}\label{relations}
		With the above notation. Let  $\alpha$ be a Maurer-Cartan element of $L_\infty[1]$-algebra $\frakg$. Then $f_1(\alpha)$ is a Maurer-Cartan element of $\frakh$. And there exists an induced strict $L_\infty[1]$-morphism  between the twisted $L_\infty[1]$-algebras $(\frakg,\{l_n ^{\alpha}\})$ and $(\frakh,\{L_n ^{f_1(\alpha)}\})$.
	\end{prop}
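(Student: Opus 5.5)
The plan is to use only the defining identity of a strict morphism, $f_1\circ l_n = L_n\circ f_1^{\ot n}$, together with linearity of $f_1$. We tacitly assume that the infinite sums in question converge, or we work under a completeness or filtration hypothesis that $f_1$ preserves. The statement consists of two parts, and we treat them in turn.

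\emph{Step 1: $f_1(\alpha)$ is a Maurer--Cartan element.} Since $f_1$ has degree $0$, we have $f_1(\alpha)\in\frakh^0$. Applying $f_1$ to the Maurer--Cartan equation of $\alpha$ term by term gives
\[
0=f_1\Big(\sum_{n\ge1}\frac{1}{n!}l_n(\alpha^{\ot n})\Big)=\sum_{n\ge1}\frac{1}{n!}L_n(f_1(\alpha)^{\ot n}).
\]
This is exactly the Maurer--Cartan equation for $f_1(\alpha)$ in $\frakh$. The only subtlety is interchanging $f_1$ with the infinite sum. This is trivial when the sums are finite, as in all cases used later, where only finitely many $l_n$ are nonzero on the relevant elements. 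In general it requires $f_1$ to be continuous with respect to the filtrations, and I would state this as a standing assumption.

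\emph{Step 2: $f_1$ is a strict morphism between the twisted algebras.} We claim that the same linear map $f_1$ works. For homogeneous $x_1,\dots,x_n\in\frakg$, expand the twisted bracket using Proposition \ref{Prop: twist-L-infty[1]} and apply the strict morphism identity to each summand:
\[
f_1\big(l_n^{\alpha}(x_1,\dots,x_n)\big)=\sum_{i\ge0}\frac{1}{i!}f_1\big(l_{n+i}(\alpha^{\ot i}\ot x_1\ot\cdots\ot x_n)\big)=\sum_{i\ge0}\frac{1}{i!}L_{n+i}\big(f_1(\alpha)^{\ot i}\ot f_1(x_1)\ot\cdots\ot f_1(x_n)\big).
\]
The right-hand side equals $L_n^{f_1(\alpha)}(f_1(x_1),\dots,f_1(x_n))$. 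Since $\alpha$ has degree $0$, moving $\alpha$ past the $x_j$ introduces no Koszul signs. Moreover, $f_1$ preserves degrees, so it also introduces no signs.

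There is no genuine obstacle. The only delicate point is again convergence and the exchange of $f_1$ with infinite sums, which is handled by the same assumption as in Step 1. Note also that Proposition \ref{Prop: twist-L-infty[1]} already guarantees that both $\{l_n^\alpha\}$ and $\{L_n^{f_1(\alpha)}\}$ are $L_\infty[1]$-structures. Therefore the identity in Step 2 is all that is needed.
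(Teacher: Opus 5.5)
Your argument is correct and is the same termwise verification the paper relies on; the paper states this proposition as an observation without writing out a proof. Applying $f_1$ summand by summand to the Maurer--Cartan equation and to the twisted brackets $l_n^{\alpha}$, using $f_1\circ l_n=L_n\circ f_1^{\ot n}$, is all that is needed, and your caveat about exchanging $f_1$ with infinite sums is a sensible refinement that costs nothing in the paper's applications, where the relevant sums are finite in each arity.
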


	\bigskip
	\section{The $L_{\infty}$-structure and cohomology theory of extended Rota-Baxter algebras with weight}\label{sec 5}
	
	Now we will present the controlling algebra and the cohomology of extended Rota-Baxter algebras.
	
	\subsection{The $L_{\infty}$-structure for extended Rota-Baxter algebras with weight}\ \label{subsec 5.1}
	 
	Before constructing the $L_{\infty}$- algebra, we give some facts about the field $\bfk$.  The following is an easy exercise:
	\begin{lem} \label{key lemma}
		For $\mu,\lambda \in \bfk$ and $m,n \in \mathbb{Z}$, set $t=\sqrt{\mu^2-4\lambda}$. We have the identity whenever the following  elements exist:
		\begin{equation*}
			\begin{aligned}
				&\frac{\lambda}{t}((\frac{\mu +t}{2})^{m}-(\frac{\mu -t}{2})^{m})\frac{\lambda}{t}((\frac{\mu +t}{2})^{n}-(\frac{\mu -t}{2})^{n})+\frac{\lambda}{t}((\frac{\mu +t}{2})^{n+m+1}-(\frac{\mu -t}{2})^{n+m+1})\\
				&=\frac{\lambda}{t}((\frac{\mu +t}{2})^{m+1}-(\frac{\mu -t}{2})^{m+1})\frac{1}{t}((\frac{\mu +t}{2})^{n+1}-(\frac{\mu -t}{2})^{n+1})	
			\end{aligned}
		\end{equation*}
	\end{lem}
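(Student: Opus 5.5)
The plan is to reduce the statement to a Vieta-type (Fibonacci/Lucas-style) identity for the two roots of $x^2-\mu x+\lambda$. Set
$$\alpha=\frac{\mu+t}{2},\qquad \beta=\frac{\mu-t}{2},$$
working if necessary in the extension field $\bfk(t)$. The phrase ``whenever the following elements exist'' will be read as: $t\neq 0$, and $\alpha,\beta\neq 0$ whenever a negative exponent occurs. Then
$$\alpha+\beta=\mu,\qquad \alpha-\beta=t,\qquad \alpha\beta=\frac{\mu^2-t^2}{4}=\lambda .$$
For $k\in\mathbb{Z}$ write $F_k=\frac{1}{t}(\alpha^k-\beta^k)$. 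In this notation the claimed identity reads
$$\lambda^2F_mF_n+\lambda F_{m+n+1}=\lambda F_{m+1}F_{n+1}.$$

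If $\lambda=0$, both sides vanish and there is nothing to prove. Otherwise I divide by $\lambda$ and use $\lambda=\alpha\beta$. It then suffices to prove
$$F_{m+1}F_{n+1}-\alpha\beta\,F_mF_n=F_{m+n+1}.$$
In fact I will prove this identity for all $\lambda$, which covers both cases at once.

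To prove it, I multiply through by $t^2$ and expand. The left side becomes
$$(\alpha^{m+1}-\beta^{m+1})(\alpha^{n+1}-\beta^{n+1})-\alpha\beta(\alpha^m-\beta^m)(\alpha^n-\beta^n).$$
In this expansion the mixed terms $\alpha^{m+1}\beta^{n+1}$ and $\alpha^{n+1}\beta^{m+1}$ appear once with each sign, so they cancel. What remains is
$$\alpha^{m+n+2}-\alpha^{m+n+1}\beta+\beta^{m+n+2}-\alpha\beta^{m+n+1}=(\alpha-\beta)\bigl(\alpha^{m+n+1}-\beta^{m+n+1}\bigr),$$
which equals $t^2F_{m+n+1}$. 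Dividing by $t^2$ gives the desired identity.

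This computation uses only the group laws in $\bfk(t)^\times$, so it is valid for all integers $m,n$, including negative ones. The result is an identity among elements of $\bfk(t)$, and it holds in $\bfk$ itself whenever the original expressions lie there. There is no genuine obstacle. The only point requiring care is this bookkeeping of where the expressions make sense: that $t\neq0$ and $\alpha,\beta\neq0$ are exactly the hypotheses implicit in ``whenever the elements exist'', and that passing to $\bfk(t)$ is harmless.
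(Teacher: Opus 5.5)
Your proof is correct. The paper gives no written proof and calls the lemma an easy exercise, and your direct expansion using $\alpha\beta=\lambda$ and $\alpha-\beta=t$ is exactly that intended verification. The $\lambda$-divided identity $F_{m+1}F_{n+1}-\lambda F_mF_n=F_{m+n+1}$ that you actually establish is precisely the companion relation $A_mB_n+B_{m+n+1}=B_{m+1}B_{n+1}$ stated in the remark following the lemma, so your argument covers that as well.
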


	\begin{rmk}\label{coefficient}
		We will denote $A_{n}:=\frac{\lambda}{t}((\frac{\mu +t}{2})^{n}-(\frac{\mu -t}{2})^{n}) $ and $B_{n}:=\frac{1}{t}((\frac{\mu +t}{2})^{n}-(\frac{\mu -t}{2})^{n})$. 
		
		We only consider $A_n $ for $n\ge -1$ and $B_n $ for $n\ge 0$ in the following context. Note that we are Not concerned with the coefficients $A_n$($B_n$) when $t=0$ or the existence of $t$. We use it as a formal notation, what we actually use is  the explicit expression for each term just as given below. Then $A_mA_n+A_{m+n+1}=A_{m+1}B_{n+1}$( or $A_mB_n+B_{m+n+1}=B_{m+1}B_{n+1}$) without any assumption.
		
		 For small $n$, we have\\
		$A_{0}=0$, $A_{1}=\lambda$,\\
		$A_{2}=\lambda \mu$, $A_{3}=\lambda (\mu^2-\lambda)$,\\
		$A_{4}=\lambda (\mu^3-2\mu\lambda)$. In particular, we assume that	$A_{-1}=-1$ even if $\lambda=0$.

	\end{rmk}
    
    Let $A$ be a vector space.  Set $$\frakm:=\mathrm{Hom}(\T(A),A)\ \mathrm{and}\
    \fraka:=\Hom(\T(A),A).$$
    
	\begin{thm}\label{thm: Linfinity for ass case}
		Keep the above notation and denote $t=\sqrt{\mu^2-4\lambda}$. There exists an $L_\infty[1]$-algebra structure on  $s \mathfrak{m}\oplus\fraka$, where $l_i$ are given by 
		$$l_1(sf)=-A_n f$$
		$$
		l_2(sf,sg)      =    (-1)^{|f|} s[f,g]_{\G}, $$
		when $i\geq 2,n=i-2$, we have
		\begin{equation*}
			\begin{aligned}
				l_i(sf,\xi_1,\cdots,\xi_{i-1})=&\sum\limits_{\substack{\sigma\in \rmS_{i-1}}}\epsilon(\sigma)\\ &
				f(\xi_{\sigma(1)}\otimes\dots\otimes\xi_{\sigma(i-1)})\\
				&-\sum\limits_{\substack{\sigma\in \rmS_{i-1}}}\sum_{k=1}^{n+1} \epsilon(\sigma) (-1)^{nm_{\sigma(1)}} \\ 
				& \xi_{\sigma(1)}\bar{\circ}  f(\xi_{\sigma(2)}\otimes\dots\otimes\underline{\id}_{kth}  \ot\dots\otimes\xi_{\sigma(i-1)}).
			\end{aligned}
		\end{equation*}
		and for $i\geq 2$, $n> i-2$,
		\begin{equation*}
			\begin{aligned}
				l_i(sf,\xi_1,\cdots,\xi_{i-1})  =&-\sum\limits_{\substack{\sigma\in \rmS_{i-1}\\a_j+m_{\sigma(j)}+1\le a_{j+1}}}\epsilon(\sigma)\\ &
				A_{n-i+1} \big(\cdots ((f\bar{\circ}\xi_{\sigma(1)})\bar{\circ}_{a_2}\xi_{\sigma(2)})\bar{\circ}_{a_3}\cdots\big)\bar{\circ}_{a_{i-1}}\xi_{\sigma(i-1)}\\
				&-\sum\limits_{\substack{\sigma\in \rmS_{i-1}\\a_j+m_{\sigma(j)}+1\le a_{j+1}, j\ge 2}} \epsilon(\sigma) (-1)^{nm_{\sigma(1)}} \\ &
				B_{n-i+3} \xi_{\sigma(1)}\bar{\circ}_{a_1}\big(\cdots ((f\bar{\circ}_{a_2}\xi_{\sigma(2)})\bar{\circ}_{a_3}\cdots)\bar{\circ}_{a_{i-1}}\xi_{\sigma(i-1)}\big),
			\end{aligned}
		\end{equation*}
		
		for homogeneous elements	$f\in\Hom(A^{\otimes n+1},A)\subseteq \frakm$, $g\in\Hom(A^{\otimes m+1},A)\subseteq \frakm$ and $\xi_j\in\Hom(A^{\otimes m_j+1},A)\subseteq \fraka$, $1\leq j\leq i-1$, and all other components vanish.
	\end{thm}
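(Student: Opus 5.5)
The plan is to check the generalised Jacobi identities of Definition~\ref{Def:L[1]-infty} directly. We organise the verification by the number of inputs taken from $s\frakm$, using a weight/arity bookkeeping. All nonzero brackets have at most one input from $s\frakm$ (apart from $l_2$ on $s\frakm\ot s\frakm$), and they land in $s\frakm$ or $\fraka$.

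\textbf{Sorting the identities.} In $l_1(sf)=-A_nf$ and in the higher $l_i(sf,\xi_1,\dots,\xi_{i-1})$, the output lies in $\fraka$. On $\fraka$ alone every bracket vanishes. Hence the Jacobi identity for inputs $(sf,sg,\xi_1,\dots,\xi_k)$ only involves compositions of the form $l(l(sf,sg),\xi\ldots)$ and $l(l(sf,\xi\ldots),sg,\xi\ldots)$. Inputs with three or more elements of $s\frakm$ give trivially zero terms, or reduce to the Jacobi identity of the Gerstenhaber bracket. Graded symmetry is built into the formulas through the symmetrisation over $\sigma$. So the statement reduces to two families of identities:
\begin{enumerate}
\item[(a)] for $(sf,\xi_1,\dots,\xi_k)$: $l_1\circ l_{k+1}=0$, since $l_1$ vanishes on $\fraka$; this family is automatic;
\item[(b)] for $(sf,sg,\xi_1,\dots,\xi_k)$ with $k\ge0$: here we need
\begin{equation*}
l_{k+1}(l_2(sf,sg),\xi\ldots)+\sum \pm\, l_{p+2}\big(sf,\xi\ldots,\,l_{q+1}(sg,\xi\ldots)\big)\pm(f\leftrightarrow g)=0.
\end{equation*}
\end{enumerate}
Note that for $k=0$, identity (b) reads $l_1(l_2(sf,sg))+l_2(l_1 sf,sg)+\dots$. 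Since $l_2(\fraka,s\frakm)$ is the term $l_2(sg,\xi)$, this gives compatibility of the weights $A_n$ with the Gerstenhaber bracket.

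\textbf{Computation of (b).} Expand both sides as sums of iterated partial compositions of $f,g,\xi_j$, together with the terms where some $\xi$ is precomposed outermost. Then compare coefficients monomial by monomial. Each monomial of type $(f\bar\circ\cdots)\bar\circ_a(g\bar\circ\cdots)$ arises in two ways. It can come from $l(l_2(sf,sg),\xi)$, carrying coefficient $A_{\bullet}$ or $B_\bullet$ indexed by the arity of $[f,g]_\G$. Or it can come from nesting $l(sf,\dots,l(sg,\dots))$, carrying a product $A_\bullet A_\bullet$, $A_\bullet B_\bullet$ or $B_\bullet B_\bullet$. The required cancellation is exactly the content of Lemma~\ref{key lemma} and its companions recorded in Remark~\ref{coefficient}:
\begin{equation*}
A_mA_n+A_{m+n+1}=A_{m+1}B_{n+1},\qquad A_mB_n+B_{m+n+1}=B_{m+1}B_{n+1}.
\end{equation*}
The boundary cases also need attention: the case $n=i-2$ with its special formula, and $A_0=0$, $A_{-1}=-1$. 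These must be checked to match the generic formula, so that the ``saturated'' terms (where $f$ is entirely filled by $\xi$'s) cancel against the $\id$-insertion terms.

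\textbf{Main obstacle and a cross-check.} The main obstacle is the sign and index bookkeeping in (b). One must track Koszul signs $\epsilon(\sigma)$, the signs $(-1)^{nm_{\sigma(1)}}$, and the positions $a_j$. One must also make sure that every monomial on one side is matched to a single monomial on the other. I would first verify the low cases $k=0,1,2$ by hand, which recovers the curved dg Lie structure of Theorem on $\mathrm{Hom}(\T(A),A)$ once twisted. Then I would run the general case by induction on $k$, peeling off the outermost $\xi$. As an independent check of the weight combinatorics, one can specialise to $\lambda=0$, where $A_n=0$ for $n\ge0$ and $B_n=\mu^{n-1}$. In that case the structure should reduce to the known Rota--Baxter (weight $\mu$) $L_\infty[1]$-algebra of Wang--Zhou \cite{WZ24}. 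This makes the $B$-terms a deformation of a known case, and only the $A$-terms need the new identity.
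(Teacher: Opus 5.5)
Your proposal follows essentially the same route as the paper's Appendix: you reduce to the Jacobi identities with inputs $(sf,sg,\xi_1,\dots,\xi_k)$ (all other cases being trivial), expand into iterated compositions, and cancel coefficients via $A_aA_b+A_{a+b+1}=A_{a+1}B_{b+1}$ and $A_aB_b+B_{a+b+1}=B_{a+1}B_{b+1}$; the monomials in which $f$ and $g$ are not directly composed, which you did not mention, cancel purely by sign between the two nesting orders. One small correction to your cross-check: for $\lambda\neq 0$ the $B_n$ themselves depend on $\lambda$, so it is not true that only the $A$-terms need the new identity, since the $B$-type cancellations genuinely use the second identity, as your main argument already does.
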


	\begin{rmk}
		We can write $l_i(sf,\xi_1,\cdots,\xi_{i-1})$ for $i\ge 1$ in the same form. The proof will be given in the Appendix. Although somewhat involved, we can give a brief summary here:  the generalised Jacobi identity holds  by Lemma \ref{key lemma} and the fact that some terms  appear twice with opposite signs.
	\end{rmk}

	\begin{thm}\label{Thm: MC elements in ex Linifnity}
		With the notation above. Let $\pi\in\mathrm{Hom}(A^{\otimes 2},A)$, $T\in\mathrm{Hom}(A,A)$, then
		$(s \pi,T)\in \mathcal{MC}(s \frakm\oplus\fraka)$ if and only if $(A,\pi,T)$ is  an extended Rota-Baxter algebra  of weight  $(\mu,\lambda)$.
	\end{thm}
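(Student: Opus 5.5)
The plan is to expand the Maurer--Cartan equation of Theorem~\ref{thm: Linfinity for ass case} directly on the element $\alpha=(s\pi,T)$ and read off its two components. First I will check degrees. Since $\pi\in\Hom(A^{\ot 2},A)$ has degree $1$, the element $s\pi$ has degree $0$ in $s\frakm$. Also $T\in\Hom(A,A)$ has degree $0$ in $\fraka$. So $\alpha\in(s\frakm\oplus\fraka)^0$ is a legitimate candidate, and all Koszul signs $\varepsilon(\sigma)$ among copies of $s\pi$ and $T$ are $+1$.

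Next I will expand $\alpha^{\ot n}$ multilinearly and use that all components other than those listed in Theorem~\ref{thm: Linfinity for ass case} vanish. Only two kinds of terms then survive.
\begin{itemize}
\item[(a)] The term $l_2(s\pi,s\pi)$, which lands in $s\frakm$.
\item[(b)] The terms $l_i(s\pi,T,\dots,T)$ with exactly one $s\pi$, which land in $\fraka$.
\end{itemize}
By graded symmetry, $\alpha^{\ot i}$ contributes $i$ copies of $l_i(s\pi,T^{\ot i-1})$. The coefficient is therefore $\frac{i}{i!}=\frac{1}{(i-1)!}$ for $i\ge 2$, while $l_1(s\pi)$ appears with coefficient $1$. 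The $s\frakm$-component of the equation is $\frac12 l_2(s\pi,s\pi)=-\tfrac12 s[\pi,\pi]_{\G}=-s(\pi\bar\circ\pi)$. It vanishes if and only if $\pi$ is associative.

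For the $\fraka$-component I specialize to $f=\pi$, so $n=1$, and $\xi_j=T$, so $m_j=0$. Then:
\begin{itemize}
\item $l_1(s\pi)=-A_1\pi=-\lambda\pi$.
\item For $i=2$ we have $n>i-2$, so the second formula applies. The $A_{0}$-term vanishes, and the $B_{2}=\mu$-term gives $-\mu\, T\bar\circ\pi$ (the sign $(-1)^{nm}$ is $1$ since $m=0$).
\item For $i=3$ we have $n=i-2$, so the first formula applies. Summing over $\sigma\in S_2$ gives $2\pi(T\ot T)-2\big(T\bar\circ\pi(T\ot\id)+T\bar\circ\pi(\id\ot T)\big)$. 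Dividing by $2!$ leaves $\pi(T\ot T)-T\circ\pi(T\ot\id)-T\circ\pi(\id\ot T)$.
\item For $i\ge 4$ we have $n=1<i-2$, so $l_i(s\pi,T,\dots,T)=0$ and the sum is finite.
\end{itemize}
Evaluating on $a\ot b$, the $\fraka$-component reads
\[
T(a)T(b)-T\big(T(a)b+aT(b)\big)-\mu T(ab)-\lambda ab=0,
\]
which is exactly the extended Rota--Baxter identity of weight $(\mu,\lambda)$. Combining the two components proves both directions at once, and it also confirms the equation claimed in the proof of Theorem~\ref{MC elements ass}.

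I expect the main obstacle to be the bookkeeping in the $i=3$ and $i=2$ terms. One must interpret $\xi_{\sigma(1)}\bar\circ f(\xi_{\sigma(2)}\ot\underline{\id}_{k})$ correctly: $T$ is composed after $\pi$, with $T$ or $\id$ placed in slot $k$. One must also confirm that all signs $(-1)^{nm_{\sigma(1)}}$ and Gerstenhaber-composition signs are trivial because $m_j=0$, and match the normalisation $\frac{1}{(i-1)!}$ so that each monomial appears with coefficient exactly $\pm1$. I would handle this by writing out the two permutations in $S_2$ explicitly and evaluating on $a\ot b$.
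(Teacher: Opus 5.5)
Your proposal is correct and follows the paper's own proof. Both arguments split the Maurer--Cartan equation into its $s\frakm$-component, which is $[\pi,\pi]_{\G}=0$ (associativity), and its $\fraka$-component, where only $l_1(s\pi)=-\lambda\pi$, $l_2(s\pi,T)=-\mu\,T\bar\circ\pi$ and $\tfrac12 l_3(s\pi,T,T)$ survive, and then evaluate on $a\otimes b$. Your bookkeeping is slightly cleaner than the paper's displayed line, which writes $\mu\, l_2(s\pi,T)$ even though $B_2=\mu$ is already inside $l_2$, before reaching the correct term $-\mu T\bar\circ\pi$.
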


	\begin{proof}
		
		$(s\pi,T) \in\mathcal{MC}(s \frakm\oplus\fraka)$ if and only if
		$[\pi,\pi]_{\G}=0$ (that is, $\pi$ is an associative product) and
		\begin{equation*}
			\begin{aligned}
				0 &= 	l_1(s\pi)+\sum_{k=2}^\infty\frac{1}{(k-1)!}l_k(s\pi,\underbrace{T,\dots,T}_{(k-1)\ \mathrm{times}})  \\
				&=  	l_1(s\pi)+\mu  l_2(s\pi,T)+\frac{1}{2}l_3(s\pi,T,T)\\
				&=  -\lambda\pi-\mu T\bar{\circ} \pi+\frac{1}{2}l_3(s\pi,T,T),
			\end{aligned}
		\end{equation*}
		For arbitrary  $(x,y)\in A^{\otimes 2}$, we have
		\begin{equation*}
			\begin{aligned}
				0&=	\big(-\lambda\pi- \mu T\bar{\circ} \pi+\frac{1}{2}l_3(s\pi,T,T)\big)(x,y)\\
				&=-\lambda\pi(x,y)-\mu T\circ \pi(x,y)+\pi(Tx,Ty)-T\pi(x,Ty)-T\pi(Tx,y)\\
				&=-\lambda xy-\mu T(xy)+T(x)T(y)-T(T(x)y)- T(xT(y))\\
				&=T(x)T(y)-T(T(x)y- xT(y)- \mu xy)-\lambda xy.\\
			\end{aligned}
		\end{equation*}
		Hence $T$ is an extended Rota-Baxter operator of weight  $(\mu,\lambda)$.
	\end{proof}

    By Theorem \ref{thm: Linfinity for ass case}, we have
    
	\begin{cor}\label{curved dg Lie of operator}
		 With the above notation. Let $(A,\pi)$ be an associative algebra, substitute $s\pi$ in place of $s f$ in the maps $l_i(sf,\xi_1,\cdots,\xi_{i-1})$ for $i\geq 1$.  Then $\fraka:=\Hom(\T(A),A)$ equipped with $d_0(1)=l_1(s\pi), d_1(-)=l_2(s\pi,-),d_2(-,-)=l_3(s\pi,-,-)$ is a curved dg Lie algebra.  
	\end{cor}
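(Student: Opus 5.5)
The approach is to derive the corollary from Theorem \ref{thm: Linfinity for ass case} by twisting. Since $\pi$ is associative, $[\pi,\pi]_{\G}=0$, and the element $\alpha=(s\pi,0)\in s\frakm\oplus\fraka$ is a Maurer--Cartan element only up to the term $l_1(s\pi)=-A_1\pi=-\lambda\pi$. Because of this term, I will not twist directly. Instead I will write out the generalised Jacobi identities of the $L_\infty[1]$-algebra on $s\frakm\oplus\fraka$ for inputs of the special form $(s\pi,\dots,s\pi,\xi_1,\dots,\xi_k)$ with $\xi_j\in\fraka$. Whenever an operation has its $s\frakm$-slot occupied by $s\pi$, I will read it as the corresponding $d_j$.

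The first step is to record which operations are nonzero. Every $l_i$ takes exactly one argument from $s\frakm$ (apart from $l_2$ on $s\frakm\otimes s\frakm$), outputs lie in $\fraka$ (or in $s\frakm$ for $l_2(sf,sg)$), and all components with two or more $\fraka$-entries and no $s\frakm$-entry vanish. Hence in the Jacobi identity with inputs $s\pi,s\pi,\xi_1,\dots,\xi_k$ there are only two kinds of surviving terms. Terms of the first kind are
$$l(s\pi,\, l(s\pi,\xi\text{'s}),\ \xi\text{'s}),$$
where the inner output lies in $\fraka$ and is fed back as a $\xi$. Terms of the second kind are
$$l(l_2(s\pi,s\pi),\ \xi\text{'s}),$$
and these vanish because $l_2(s\pi,s\pi)=\pm s[\pi,\pi]_{\G}=0$. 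One also gets terms $l_1(s\pi)$ inserted as a $\xi$, namely $l(s\pi, l_1(s\pi),\xi\text{'s})$. This is exactly where the curvature $d_0(1)$ enters.

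The second step is to specialize to $k=0,1,2,3$, with two copies of $s\pi$ in each case. The case $k=0$ gives $l_2(s\pi,l_1(s\pi))=0$, which is (1): $d_1\circ d_0=0$. For $k=1$ the surviving terms are $l_2(s\pi,l_2(s\pi,x))$ and $l_3(s\pi,l_1(s\pi),x)$, which gives (2). For $k=2$, the terms $l_2(s\pi,l_3(s\pi,x,y))$ and $l_3(s\pi,l_2(s\pi,x),y)$ together with their symmetric counterparts, plus $l_4(s\pi,l_1(s\pi),x,y)$, give (3) once we check that $l_4(s\pi,d_0(1),x,y)=0$. For $k=3$ the identity gives Jacobi (4) once we check that $l_4(s\pi,\dots)$ and $l_5(s\pi,\dots)$ do not contribute. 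In each case I will match the combinatorial factors (counts of shuffles placing the two copies of $s\pi$) so that every identity is an exact multiple of the required one. Property (0), graded symmetry, follows directly from (i) of Definition \ref{Def:L[1]-infty}.

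The main obstacle is the vanishing of the higher brackets with $f=\pi$. Here $n=1$, and the second formula in Theorem \ref{thm: Linfinity for ass case} applies only when $n>i-2$, i.e.\ $i\le 2$; for $i=3$ the first formula applies; for $i\ge4$ we have $n<i-2$, and I claim these components are zero. I need to verify this from the statement: for $n=i-2$ the sum over $k$ runs over $n+1$ slots of $f$ filled by the $\xi$'s with one $\id$, and once $n+1<i-2$ no such terms exist. If the statement is ambiguous here, I would fall back on the appendix convention that the terms are indexed by compositions requiring $i-1\le n+1$ insertions, so that they vanish for $i\ge4$ when $n=1$. With that established, the Jacobi identities reduce precisely to (1)--(4), and the factor $2$ from two copies of $s\pi$ cancels uniformly.
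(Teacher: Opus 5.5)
Your proposal is correct and takes the same route as the paper, which obtains the corollary by a bare citation of Theorem \ref{thm: Linfinity for ass case}. Restricting the generalised Jacobi identities to inputs $(s\pi,s\pi,\xi_1,\dots,\xi_k)$ supplies exactly the detail that citation leaves unstated: it uses $[\pi,\pi]_{\G}=0$, the absence of Koszul signs among the degree-$0$ copies of $s\pi$, and the vanishing of $l_i(s\pi,\dots)$ for $i\ge 4$ (the case $n=1<i-2$, covered by ``all other components vanish'').
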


	\medskip
	
	\subsection{Applications for special cases: Rota-Baxter algebras with weight and modified Rota-Baxter algebras}\label{subsec 5.2}\
	
	We now discuss two significant algebraic structures: Rota-Baxter algebras with weight and modified Rota-Baxter algebras.
	
	\begin{cor}\label{wRBLie}
		Keep the above notation and let $\lambda=0$, then there exists an $L_\infty[1]$-algebra structure on  $s \frakm\oplus\fraka$. It is just the  $L_\infty[1]$-algebra of  Rota-Baxter algebras of weight $\mu$, where $l_i$ are given by :

		$$
		l_2(sf,sg)      =    (-1)^{|f|} s[f,g]_{\G}, $$
		and for $i\geq 2,n=i-2$,
		\begin{equation*}
			\begin{aligned}
				l_i(sf,\xi_1,\cdots,\xi_{i-1})=&\sum\limits_{\substack{\sigma\in \rmS_{i-1}}}\epsilon(\sigma)\\ &
				f(\xi_{\sigma(1)}\otimes\dots\otimes\xi_{\sigma(i-1)})\\
				&+\sum\limits_{\substack{\sigma\in \rmS_{i-1}}}\sum_{k=1}^{n+1} \epsilon(\sigma) (-1)^{nm_{\sigma(1)}} \\ 
				&- \xi_{\sigma(1)}\bar{\circ}  f(\xi_{\sigma(2)}\otimes\dots\otimes\underline{\id}_{kth}  \ot\dots\otimes\xi_{\sigma(i-1)}),
			\end{aligned}
		\end{equation*}
		for $i\geq 2,n>i-2$,
		\begin{equation*}
			\begin{aligned}
				&l_i(sf,\xi_1,\cdots,\xi_{i-1})=
				\sum\limits_{\substack{\sigma\in \rmS_{i-1}\\a_j+m_{\sigma(j)}+1\le a_{j+1}, j\ge 2}} \epsilon(\sigma) (-1)^{nm_{\sigma(1)}} \\ &
				-\mu^{n-i+2} \xi_{\sigma(1)}\bar{\circ}\big(\cdots ((f\bar{\circ}_{a_2}\xi_{\sigma(2)})\bar{\circ}_{a_3}\cdots)\bar{\circ}_{a_{i-1}}\xi_{\sigma(i-1)}\big).
			\end{aligned}
		\end{equation*}
		
		For homogeneous elements	$f\in\Hom(A^{\otimes n+1},A)\subseteq \frakm$, $g\in\Hom(A^{\otimes m+1},A)\subseteq \frakm$	and $\xi_j\in\Hom(A^{\otimes m_j+1},A)\subseteq \fraka$, $1\leq j\leq i-1$, and all other components vanish.
	\end{cor}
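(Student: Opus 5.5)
This corollary is a specialization of Theorem \ref{thm: Linfinity for ass case}, so I would not re-verify any generalised Jacobi identity. The plan is to substitute $\lambda=0$ into the coefficients $A_k$ and $B_k$ of Remark \ref{coefficient}, simplify the formulas for the $l_i$, and check that they become the displayed ones.

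The first step is to compute the coefficients at $\lambda=0$. Then $t=\sqrt{\mu^2}$, and we may take $t=\mu$, so $\frac{\mu+t}{2}=\mu$ and $\frac{\mu-t}{2}=0$. Because Remark \ref{coefficient} treats $A_k$ and $B_k$ as explicit polynomial expressions rather than as division by $t$, it is cleaner to use the recursions instead of the closed form. The identities $A_mA_n+A_{m+n+1}=A_{m+1}B_{n+1}$ and $A_mB_n+B_{m+n+1}=B_{m+1}B_{n+1}$, together with $B_1=1$ and $B_2=\mu$, show that $B_{k+1}=\mu B_k-\lambda B_{k-1}$ and $A_k=\lambda B_k$ for $k\ge 0$. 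At $\lambda=0$ this gives $A_k=0$ for all $k\ge 0$ and $B_k=\mu^{k-1}$ for $k\ge 1$. The convention $A_{-1}=-1$ is kept even when $\lambda=0$.

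The second step is to substitute these values case by case.
\begin{enumerate}
\item[(a)] $l_1(sf)=-A_nf$ with $n\ge 0$, so $l_1$ vanishes and does not appear in the corollary.
\item[(b)] $l_2(sf,sg)$ does not involve the coefficients, so it is unchanged.
\item[(c)] When $n=i-2$, the formula in Theorem \ref{thm: Linfinity for ass case} has no coefficients, so it is unchanged. This is the boundary case where $A_{n-i+1}=A_{-1}=-1$ and $B_{n-i+3}=B_1=1$, which is consistent with the general formula.
\item[(d)] When $n>i-2$, the index $n-i+1\ge 0$, so $A_{n-i+1}=0$ and the first sum disappears. In the second sum $B_{n-i+3}=\mu^{n-i+2}$. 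The positions $a_1$ then range over all insertion slots, so $\xi_{\sigma(1)}\bar{\circ}_{a_1}(\cdots)$ summed over $a_1$ becomes the full Gerstenhaber composition $\xi_{\sigma(1)}\bar{\circ}(\cdots)$, with the sign conventions absorbed as in the theorem. This yields exactly the stated formula.
\end{enumerate}

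The third step is to justify the phrase ``it is just the $L_\infty[1]$-algebra of Rota-Baxter algebras of weight $\mu$''. By the Example, extended Rota-Baxter algebras of weight $(\mu,0)$ are exactly Rota-Baxter algebras of weight $\mu$. Theorem \ref{Thm: MC elements in ex Linifnity} then shows that Maurer-Cartan elements of this specialised algebra are exactly such structures.

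The main obstacle is the legitimacy of the evaluation at $\lambda=0$, since $t$ may formally vanish when $\mu=0$. I would handle it with the recursion argument above, which avoids dividing by $t$. A secondary point is matching the summation over $a_1$ with the Gerstenhaber composition, which requires careful bookkeeping of signs.
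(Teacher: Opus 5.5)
Your route is the intended one: the paper simply specializes Theorem \ref{thm: Linfinity for ass case}. Your coefficient computation ($A_k=\lambda B_k=0$ for $k\ge 0$, $B_k=\mu^{k-1}$ for $k\ge 1$) and items (b)--(d) are fine. The gap is item (a).

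In this paper $\frakm=\Hom(\T(A),A)=\bigoplus_{n\ge 0}\Hom(A^{\otimes n},A)$ contains the arity-zero summand $\Hom(\bfk,A)$. For these elements the index in the theorem is $n=-1$, so $l_1(sf)=-A_{-1}f=f$. This holds precisely because of the convention $A_{-1}=-1$ at $\lambda=0$, which you yourself keep; it is also the $i=1$, $n=i-2$ instance of the boundary formula. So the specialization does not give $l_1=0$, and your assertion ``$n\ge 0$'' silently drops this component.

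Nor can the component be discarded. Take $f\in\Hom(\bfk,A)$ with $f(1)=e$, $g\in\Hom(A^{\otimes 2},A)$ and $\xi\in\Hom(A,A)$, and consider the generalised Jacobi identity for $(sf,sg,\xi)$. Every term vanishes except $l_3(l_1(sf),sg,\xi)$ and $l_2(l_2(sf,sg),\xi)$. The others die because $f$ has no inputs, because $l_1(sg)=-\lambda g=0$, and because $l_1$ vanishes on $\fraka$ and $l_3$ vanishes with two $s\frakm$ entries. The surviving term $l_2(l_2(sf,sg),\xi)$ is the map $x\mapsto g(e,\xi(x))-g(\xi(x),e)-\xi(g(e,x))+\xi(g(x,e))$. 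This is nonzero in general, for instance when $g$ is the product of a noncommutative algebra. Hence with $l_1\equiv 0$ on the whole space the displayed brackets do not form an $L_\infty[1]$-algebra.

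This arity-zero piece of $l_1$ is also what produces $\delta^0=\Id$ in the twisted complex. The paper relies on that in the rigidity argument, through $\delta^1\circ\partial^0_{\Alg}=\partial^0_{\RB}$.

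To complete the proof you must do one of two things.
\begin{enumerate}
\item[(i)] Add the component $l_1(sf)=f$ for $f\in\Hom(\bfk,A)$ to the list of brackets.
\item[(ii)] Replace $\frakm$ by $\Hom(\overline{\T}(A),A)$. Then check that $s\Hom(\overline{\T}(A),A)\oplus\fraka$ is closed under all $l_i$: the only bracket landing in $s\frakm$ is $l_2(sf,sg)$, whose arity is $p+q-1\ge 1$. So it is a sub-$L_\infty[1]$-algebra, $l_1$ vanishes identically on it, and it still contains every Maurer--Cartan element $(s\pi,T)$.
\end{enumerate}
Either way, the clause ``all other components vanish'' in the statement needs this amendment, and your step (a) must address it rather than assume $n\ge 0$.
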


	\begin{cor}\label{mRNLie}
		Keep the above notation and let $\mu=0$, so  $t=2\sqrt{-\lambda}$, then there exists an $L_\infty[1]$-algebra structure on  $s \frakm\oplus\fraka$. It is precisely the  $L_\infty[1]$-algebra of modified Rota-Baxter algebras, where $l_i$ are given by :
		
		\begin{eqnarray}
			l_1(sf)=
			\begin{cases}
				(-\lambda)^{\frac{n+1}{2}} f	, &  \text{$n$ is odd }  \\
				0, & \text{$n$ is even},
			\end{cases}
		\end{eqnarray}
		$$
		l_2(sf,sg)      =    (-1)^{|f|} s[f,g]_{\G}, $$
		and for $i\geq 2$,
		when $n-i$ is even, 
		\begin{equation*}
			\begin{aligned}
				l_i(sf,\xi_1,\cdots,\xi_{i-1})=&\sum\limits_{\substack{\sigma\in \rmS_{i-1}\\a_j+m_{\sigma(j)}+1\le a_{j+1}}}\epsilon(\sigma)\\ &
				(-\lambda)^{\frac{n-i+2}{2}}\big(\cdots ((f\bar{\circ}_{a_1}\xi_{\sigma(1)})\bar{\circ}_{a_2}\xi_{\sigma(2)})\bar{\circ}_{a_3}\cdots\big)\bar{\circ}_{a_{i-1}}\xi_{\sigma(i-1)}\\
				&+\sum\limits_{\substack{\sigma\in \rmS_{i-1}\\a_j+m_{\sigma(j)}+1\le a_{j+1}, j\ge 2}} \epsilon(\sigma) (-1)^{nm_{\sigma(1)}} \\ 
				&(-\lambda)^{\frac{n-i+2}{2}} \xi_{\sigma(1)}\bar{\circ}\big(\cdots ((f\bar{\circ}_{a_2}\xi_{\sigma(2)})\bar{\circ}_{a_3}\cdots)\bar{\circ}_{a_{i-1}}\xi_{\sigma(i-1)}\big),    \\ 
			\end{aligned}
		\end{equation*}\\
		when $n-i$ is odd, $l_i(sf,\xi_1,\cdots,\xi_{i-1})=0$,
		for homogeneous elements	$f\in\Hom(A^{\otimes n+1},A)\subseteq \frakm$, $g\in\Hom(A^{\otimes m+1},A)\subseteq \frakm$	and $\xi_j\in\Hom(A^{\otimes m_j+1},A)\subseteq \fraka$, $1\leq j\leq i-1$, and all other components vanish.
	\end{cor}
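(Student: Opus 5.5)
The statement is the specialization $\mu=0$ of Theorem \ref{thm: Linfinity for ass case}. The plan is to substitute $\mu=0$ into the general formulas, compute the coefficients $A_k$ and $B_k$ explicitly, and read off the stated brackets. The $L_\infty[1]$-identities then need no separate check, since they were established in Theorem \ref{thm: Linfinity for ass case} for arbitrary $(\mu,\lambda)$.

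\textbf{Step 1: the coefficients at $\mu=0$.} Here $t=2\sqrt{-\lambda}$, so $\frac{\mu\pm t}{2}=\pm\sqrt{-\lambda}$. Writing $s=\sqrt{-\lambda}$, we get
\[
A_k=\frac{\lambda}{2s}\bigl(s^k-(-s)^k\bigr),
\]
which vanishes for even $k$ and equals $\lambda s^{k-1}=\lambda(-\lambda)^{\frac{k-1}{2}}=-(-\lambda)^{\frac{k+1}{2}}$ for odd $k$. Similarly $B_k=0$ for even $k$ and $B_k=(-\lambda)^{\frac{k-1}{2}}$ for odd $k$. Since only these polynomial expressions are used, Remark \ref{coefficient} means no square root actually has to exist. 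As a check, the recursion $A_{k+1}=\mu A_k-\lambda A_{k-1}$ with $\mu=0$ gives the same values: $A_1=\lambda$, $A_3=-\lambda^2$, and $A_{-1}=-1$ is consistent with the odd-$k$ formula.

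\textbf{Step 2: $l_1$.} From $l_1(sf)=-A_nf$ we obtain $l_1(sf)=(-\lambda)^{\frac{n+1}{2}}f$ when $n$ is odd, and $l_1(sf)=0$ when $n$ is even. This is exactly the stated formula. The bracket $l_2(sf,sg)$ is unchanged.

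\textbf{Step 3: $l_i$ for $n>i-2$.} The first sum carries the coefficient $-A_{n-i+1}$ and the second carries $-B_{n-i+3}$.
\begin{itemize}
\item If $n-i$ is even, then $n-i+1$ and $n-i+3$ are both odd. So $-A_{n-i+1}=(-\lambda)^{\frac{n-i+2}{2}}$ and $B_{n-i+3}=(-\lambda)^{\frac{n-i+2}{2}}$.
\item If $n-i$ is odd, both coefficients vanish.
\end{itemize}

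\textbf{Step 4: the boundary case $n=i-2$.} Here the general theorem has its own formula, and $n-i=-2$ is even. To present the answer uniformly, one checks that this formula agrees with the $n>i-2$ pattern at exponent $\frac{n-i+2}{2}=0$:
\begin{itemize}
\item The first term, $f(\xi_{\sigma(1)},\dots)$, is the iterated composition with every slot filled, and it carries coefficient $1=-A_{-1}$.
\item The second term, with the $\underline{\id}$ insertions, should be the $B_1=1$ term with $\xi_{\sigma(1)}$ placed outside.
\end{itemize}

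Matching these forms and their signs is the main obstacle. I expect the sign convention of the second sum to need care, since the stated corollary writes a $+$ sign where the theorem has $-$. I would reconcile this directly using the definitions of $\bar{\circ}$ and $\bar{\circ}_{a}$.
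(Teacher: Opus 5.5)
Your route is the paper's own: the corollary is just the general theorem for weight $(\mu,\lambda)$ specialized to $\mu=0$, and the paper gives no further argument. Your values are correct: $A_k=-(-\lambda)^{\frac{k+1}{2}}$ and $B_k=(-\lambda)^{\frac{k-1}{2}}$ for odd $k$, and zero for even $k$. So is the resulting $l_1$ and the first sum. In the boundary case $n=i-2$, the fully filled term is the exponent-$0$ term with $-A_{-1}=1$, and the $\Id$-insertion term is the $B_1$-term.

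The gap is the sign of the second sum. You leave it open and propose to reconcile it through the conventions for $\bar{\circ}$ and $\bar{\circ}_a$. That cannot work, for two reasons.

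First, the mismatch is not confined to $n=i-2$. In your Step 3 the theorem's coefficient is $-B_{n-i+3}=-(-\lambda)^{\frac{n-i+2}{2}}$. So for every even $n-i$, what you actually read off is opposite to the printed $+(-\lambda)^{\frac{n-i+2}{2}}$.

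Second, no composition convention can absorb a global sign on this sum. Take $f=\pi\in\Hom(A^{\otimes 2},A)$, $i=3$, and $\xi_1=\xi_2=T\in\Hom(A,A)$. All degrees are $0$ and $T$ has arity one, so $\bar{\circ}$ and $\bar{\circ}_1$ coincide and no Koszul or Gerstenhaber sign appears. With the printed sign you get $l_1(s\pi)=-\lambda\pi$, $l_2(s\pi,T)=0$, and $\tfrac12 l_3(s\pi,T,T)=\pi(T\otimes T)+T\pi(T\otimes\Id)+T\pi(\Id\otimes T)$. The Maurer--Cartan equation would then read $T(x)T(y)+T(T(x)y+xT(y))=\lambda xy$, which is not the modified Rota--Baxter identity. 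With the minus sign it is exactly $T(x)T(y)=T(T(x)y+xT(y))+\lambda xy$.

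So the correct conclusion of your argument is that the second sum carries $-(-\lambda)^{\frac{n-i+2}{2}}$, just as the $\lambda=0$ corollary keeps $-\mu^{n-i+2}$. The $+$ in the printed statement is a misprint, not a convention to be reconciled.

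This matters for your proof in two places:
\begin{itemize}
\item Only the signed version is a specialization of the general theorem. Only for it do the $L_\infty[1]$ identities come for free, as you claim.
\item Only for it does the clause ``precisely the $L_\infty[1]$-algebra of modified Rota--Baxter algebras'' hold. You never address that clause; justify it explicitly. Use the Maurer--Cartan characterization of $(s\pi,T)$ at $\mu=0$, together with the identification of modified Rota--Baxter algebras of weight $\lambda$ with extended Rota--Baxter algebras of weight $(0,\lambda)$.
\end{itemize}
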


	\medskip
	
	\subsection{The cohomology of extended Rota-Baxter algebras }\ \label{subsec 5.3}

	Let $A$ be a vector space.  Let $$\frakm:=\mathrm{Hom}(\T(A),A)=\bigoplus_{n\ge 0}\Hom(A^{\otimes n},A)\ \mathrm{and}\
	\fraka:=\Hom(\T(A),A)=\bigoplus_{n\ge 0}\Hom(A^{\otimes n},A).$$
	Recall that we have constructed an $L_\infty[1]$-structure on $s\frakm\oplus\fraka$ in Theorem~\ref{thm: Linfinity for ass case}.

	Let $(A, \pi, T)$ be an extended Rota-Baxter  algebra.  By Theorem~\ref{Thm: MC elements in ex Linifnity}, $(s\pi, T)$ is a Maurer-Cartan element in the $L_\infty[1]$-algebra $s\frakm\oplus\fraka$. By  Proposition~\ref{Prop: twist-L-infty[1]}, twisting $s\frakm\oplus\fraka$ by  $(s\pi, T)$ gives a new $L_\infty[1]$-algebra, whose new differential is denoted by $l_{1}^{(s\pi,T)}$.

	\begin{prop}\label{cochaincomplexad}
		The complex $(s\frakm\oplus\fraka,l_{1}^{(s\pi,T)})$ is given by a mapping cone.
	\end{prop}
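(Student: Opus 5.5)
We compute the twisted differential $l_1^{(s\pi,T)}$ directly from Proposition~\ref{Prop: twist-L-infty[1]}. Write $\alpha=(s\pi,T)$. For $sf\in s\frakm$ and $\xi\in\fraka$ we have
\[
l_1^{\alpha}(sf,\xi)=\sum_{i\ge 0}\frac{1}{i!}\,l_{i+1}\big(\alpha^{\otimes i}\otimes (sf+\xi)\big).
\]
By Theorem~\ref{thm: Linfinity for ass case}, every nonzero bracket has exactly one entry in $s\frakm$ and the rest in $\fraka$, except $l_2$ on $s\frakm\otimes s\frakm$. So the sum splits into three pieces. The first is $l_2(s\pi,sf)=\pm s[\pi,f]_{\G}$, which lies in $s\frakm$. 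The second is $\sum_i \frac{1}{i!}l_{i+1}(sf,T,\dots,T)$, which lies in $\fraka$. The third is $\sum_i\frac{1}{i!}l_{i+2}(s\pi,T^{\otimes i},\xi)$, which lies in $\fraka$. There is no component $\fraka\to s\frakm$, so the differential is lower triangular:
\[
l_1^{\alpha}(sf,\xi)=\big(\partial_{\Alg}'(sf),\ \Phi(sf)+\partial'(\xi)\big).
\]

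We then identify each block.

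\begin{enumerate}
\item The $s\frakm$-block is $\pm s[\pi,-]_{\G}$, i.e.\ up to suspension signs the Hochschild differential $\partial_{\Alg}$ of Definition~\ref{cohomology of algebra}.
\item On $\fraka$, only $l_1,l_2,l_3$ with first entry $s\pi$ contribute, since $\pi$ has arity $2$ (so $n=1$) and $l_i(s\pi,\dots)$ vanishes for $i>3$. Thus
\[
\partial'(\xi)=l_2(s\pi,\xi)+l_3(s\pi,T,\xi)=d_1(\xi)+d_2(T,\xi)=d_1^T(\xi)=\partial_\RB(\xi).
\]
Here we use Corollary~\ref{curved dg Lie of operator} and Proposition-Definition~\ref{cohomology of operator}. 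This is the Hochschild complex of $A_\star$ with coefficients in ${}_\rhd A_\lhd$.
\item The off-diagonal map $\Phi(sf)=\sum_{i}\frac1{i!}l_{i+1}(sf,T,\dots,T)$ needs the most care. Since $T$ has arity $1$ (all $m_j=0$), the composites $f\bar\circ_{a_1}T\bar\circ\cdots$ over ordered positions with $\epsilon(\sigma)=1$ reduce, after dividing by $i!$, to a sum over subsets of inputs of $f$ into which $T$ is inserted. The $B$-terms give $T\circ f$ with $T$ inserted in the remaining slots. In arity $n+1$ this yields the explicit formula
\[
\Phi(f)(x_1,\dots,x_{n+1})=\sum_{k} A_{\,n+1-k}\, f(\text{$T$ applied to $k$ of the $x_j$}) \;-\; T\Big(\sum_{k} B_{\,n+1-k}\,f(\text{$T$ applied to $k$ of the $x_j$})\Big),
\]
with a top term $f(Tx_1,\dots,Tx_{n+1})$ and a term $-T\circ f(\dots)$ coming from the case $i=n+2$.
\end{enumerate}

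The main step is to show that $\Phi$ is a chain map from $(\frakm,\partial_{\Alg})$ to $(\fraka,\partial_\RB)$, up to the sign conventions of the shift. We do not need to verify this by hand: $(l_1^\alpha)^2=0$ holds automatically because the twisted structure is $L_\infty[1]$. Taking the $\fraka$-component of $(l_1^\alpha)^2(sf,0)$ gives
\[
\Phi\circ\partial_{\Alg}' + \partial_\RB\circ\Phi=0.
\]
Hence $(s\frakm\oplus\fraka, l_1^\alpha)$ is precisely the mapping cone of $\Phi$ (up to a shift and signs), with the shift $s$ on $\frakm$ playing the role of the cone suspension.

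The expected obstacle is the bookkeeping of signs and coefficients in item 3: collapsing the shuffle/position sums with the $1/i!$ and matching the $A_{n-i+1}$ and $B_{n-i+3}$ indices. I would do this first for $n=0,1$ as a sanity check against Theorem~\ref{Thm: MC elements in ex Linifnity}, where $\Phi(\pi)$ should reproduce the extended Rota-Baxter identity, and then state the general formula.
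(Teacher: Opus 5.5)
Your proposal is correct and follows the paper's route. You expand $l_1^{(s\pi,T)}$, observe that it is block lower-triangular, identify the diagonal blocks as $-s[\pi,-]_{\G}$ and $l_2(s\pi,-)+l_3(s\pi,T,-)=d_1^T=\partial_\RB$, and read off the off-diagonal block as the connecting map of the cone; deriving the chain-map identity from $(l_1^{\alpha})^2=0$ is exactly what justifies the paper's unproved remark that $\delta$ is a cochain map.

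There is one slip in item 3, and it does not affect the mapping-cone conclusion. For $f$ of arity $n+1$ with $T$ inserted in $k$ slots, the coefficient is $-A_{n-k}$ for $0\le k\le n+1$, not $A_{n+1-k}$. So the $k=0$ term is $l_1(sf)=-A_nf$, and the top term carries $-A_{-1}=1$. Your coefficients $-B_{n+1-k}$ on the $T\circ f$ terms are right. The $n=1$ check you propose would catch the error, since it must return $T(x)T(y)-T(T(x)y+xT(y)+\mu xy)-\lambda xy$.
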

	\begin{proof}

		It suffices to make explicit the differential $l_1^{(s\pi, T)}$.
		
		For  $n\geq 1$, $f\in \mathrm{Hom}( A^{\otimes n},A), g\in  \mathrm{Hom}( A^{\otimes n-1},A)$,
		\begin{eqnarray*}
			l_1^{(s\pi, T)}(sf, g)
			&=& \sum_{k=0}^{\infty}\frac{1}{k!}l_{k+1}(\underbrace{(s\pi,T),\cdots,(s\pi,T)}_{k\ \mathrm{times}}, (sf, g))\\
			&=&l_1(sf,g)+ l_2((s\pi, T),(sf,g))
			+ \sum_{k=2}^{\infty}\frac{1}{k!}l_{k+1}(\underbrace{(s\pi,T),\cdots,(s\pi,T)}_{k\ \mathrm{times}}, (sf, g))\\
			&=& \big(l_2(s\pi, sf), l_1(sf)+l_2(s\pi, g)+l_3(s\pi, T, g)+l_2(sf, T)
			+ \sum_{k=2}^{n}\frac{1}{k!}l_{k+1}(sf, \underbrace{T,\cdots,T}_{k\ \mathrm{times}}) \big).
		\end{eqnarray*}

		It is easy to see that
		$l_2(s\pi, sf)=- s[\pi, f]_{\G}$ induces a  differential on $\frakm=\Hom(\T(A),A)$. Denote it by $\partial_\Alg=[\pi,-]_{\G}$. $(\frakm,\partial_\Alg)$ is just the cochain complex of $A$ in Definition \ref{cohomology of algebra}

		Let us compute $l_1(sf)+l_2(s\pi, g)+l_3(s\pi, T, g)
		+l_2(sf, T)+ \sum\limits_{k=2}^{n}\frac{1}{k!}l_{k+1}(sf, \underbrace{T,\cdots,T}_{k\ \mathrm{times}})$.
		
		For   $x_1,\dots,x_{n}\in A$, note that $(l_2(s\pi, g)+l_3(s\pi, T, g))(x_1, \dots, x_{n})$
		induces a  differential on $\fraka=\Hom(\T(A),A)$ just as in Proposition-Definition \ref{cohomology of operator}. Denote this differential by $\partial_\RB$. So $(\fraka,\partial_\RB)$ is just the cochain complex of $T$.
		
		On the other hand,
		\begin{eqnarray*}
			&&l_1(sf)+l_2(sf, T)+\sum\limits_{k=2}^{n}\frac{1}{k!}l_{k+1}(sf, \underbrace{T,\cdots,T}_{k\ \mathrm{times}})(x_1, \dots, x_{n})\\
			&=&-A_{n-1}f-A_{n-2}f\bar{\circ} T-B_{n}T\bar{\circ} f\\
			&&-\sum_{k=2}^{n} \sum\limits_{\substack{\sigma\in \rmS_{k}\\a_j+m_{\sigma(j)}+1\le a_{j+1}}} A_{n-k-1}((f\bar{\circ}_{a_1}T)\bar{\circ}_{a_2}T)\bar{\circ}_{a_3}\cdots\big)\bar{\circ}_{a_{k}}T(x_1, \dots, x_{n})\\
			&&-\sum_{k=2}^{n} B_{n-k+1} \sum\limits_{\substack{\sigma\in \rmS_{k}\\a_j+m_{\sigma(j)}+1\le a_{j+1}}} 
			T\bar{\circ}\big(\cdots ((f\bar{\circ}_{a_1}T)\bar{\circ}_{a_2}\cdots)\bar{\circ}_{a_{k-1}}T\big) (x_1, \dots, x_{n})\\
			&=&-\sum\limits_{\substack{0\le i_1\le \cdots \le i_{k}\le n \\0\le k \le n}}A_{n-k-1}f(x_1, \cdots, T(x_{i_1}),\cdots, T(x_{i_{k}}),\cdots,x_n )\\
			&&-\sum\limits_{\substack{0\le i_1\le \cdots \le i_{k}\le n\\0\le k\le n-1}} B_{n-k+1}T\circ f(x_1, \cdots, T(x_{i_1}),\cdots, T(x_{i_{k}}),\cdots,x_n )\\
		\end{eqnarray*}
		It  induces a linear map $\frakm\to  \fraka$, denote by $\delta$.
		Hence we have
		$$ l_1^{(s\pi, T)}(sf, g) =\big(-s\partial_\Alg^{n}(f), \partial_\RB^{n-1}(g)+\delta^n(f)\big),$$ and $\delta$ is a cochain map.
	\end{proof}
	
	From the above proof, we could see that:
	\begin{defn}\label{RBA homology}
		With the above notation.
			\textbf{The cochain complex of the extended Rota-Baxter algebra $(A,T)$} is defined to be $(\rmC^*_{\RBA}(A)=s\frakm\oplus \fraka,\partial_\RBA)$ where $\partial_\RBA^{n}(sf,g)=\big(s\partial_\Alg^{n}(f), -\partial_\RB^{n-1}(g)-\delta^n(f)\big)$. It's induced by the mapping cone of the cochain map $\delta$ from the cochain complex of the algebra $A$ to the cochain complex of operator $T$.  The corresponding cohomology group is called \textbf{the cohomology of the extended Rota-Baxter algebra $(A,T)$}. 
	
	\end{defn}

	We generalize the above definition to the case of coefficients in an arbitrary extended Rota-Baxter bimodule.
	
	Let $(M,T_M)$ be a bimodule of the extended Rota-Baxter algebra $(A,T)$ with weight $(\mu,\lambda)$.

	Consider the trivial extension  $ A \ltimes M$ of the extended Rota-Baxter algebra $(A,T)$ by the bimodule $(M,T_M)$ in Proposition~\ref{Prop: trivial extension of extended Rota-Baxter bimodule}, then we get the complex $( \rmC^*_{\RBA}(A \ltimes M),\partial_{\RBA}^*)$ by  Definition \ref{RBA homology}. 
	
	\begin{prop-def} \label{RBA homology on M}
		\textbf{The cochain complex of the extended Rota-Baxter algebra $(A,T)$ with coefficients in the bimodule $(M,T_M)$},  denoted by $(\rmC_{\RBA}^*(A, M),\partial_{\RBA}^*)$, is the subcomplex of
		$( \rmC^*_{\RBA}(A \ltimes M, (A \ltimes M)_\ad),\partial_{\RBA}^*)$ given by the natural injection
		from $$\rmC_{\RBA}^n(A, M)=\mathrm{Hom}( A^{\otimes n},M)\oplus \mathrm{Hom}( A^{\otimes n-1},M)$$ to $$\rmC^n_{\RBA}(A \ltimes M, (A \ltimes M)_\ad)=\mathrm{Hom}( (A\oplus M)^{\otimes n},A\oplus M)\oplus \mathrm{Hom}( (A\oplus M)^{\otimes n-1},A\oplus M).$$ The corresponding cohomology group is called \textbf{the cohomology of the extended Rota-Baxter algebra $(A,T)$ with coefficients in the extended Rota-Baxter bimodule $(M,T_M)$}. 
	\end{prop-def}
	
	\begin{rmk}
		Now we provide a more detailed description of the cochain complex of extended Rota-Baxter algebras. Likewise, we can generalize the cohomology of algebras and extended Rota-Baxter operators to the case of coefficients in an arbitrary bimodule.
		
		Let $(A,\pi)$ be an associative algebra and  $M$ be a  bimodule over $A$. \textbf{The cochain complex of $A$ with coefficients in $M$} is defined to be $(\rmC^*_{\Alg}(A,M):=\mathrm{Hom}(\T(A),M),\partial_{\Alg})$ where the coboundary operator $\partial_{\Alg}$ is given as follows. The corresponding cohomology is called \textbf{the cohomology of $A$ with coefficients in $M$}. We could see that this is just the Hochschild cochain complex and Hochschild cohomology of $A$ with coefficients in $M$. \\
		
		The coboundary operator $$\partial_{\Alg}^n: \mathrm{Hom}(A^{\otimes n},M)\longrightarrow  \mathrm{Hom}(A^{\otimes n+1},M), n\geq 0$$ is given by
		\[\begin{split}
			\partial_{\Alg}^n (f)(x_1,\dots,x_{n+1})&=(-1)^n x_1 f(x_2,\dots, x_n)+f(x_1,\dots x_{n-1}) x_n\\
			&+\sum_{i=1}^{n-1} (-1)^{i+n}f( x_1,\dots,x_{i-1},x_i x_{i+1}, \dots,x_n),
		\end{split}\]
		for all $f\in \mathrm{Hom}(A^{\otimes n},M),~x_1,\dots, x_{n+1}\in A$.

		Consider an extended Rota-Baxter algebra $(A,T)$ and $(M, T_M)$ is an extended Rota-Baxter bimodule over $(A,T)$. \textbf{The cochain complex of the extended Rota-Baxter operator  $T$ with coefficients in $M$} $(\rmC^*_{\RB}(A,M):=\mathrm{Hom}(\T(A),M),\partial_{\RB})$ is defined to be the Hochschild cochain complex   of $A_\star$ with coefficient in $_\rhd M_\lhd$(See Proposition  \ref{Prop: new RB algebra} and \ref{descending property}). The corresponding  cohomology group is called \textbf{the cohomology of the extended Rota-Baxter operator  $T$ with coefficients in $M$}.
		
		There exists a cochain map $\delta: \rmC^*_{\Alg}(A,M)\to \rmC^*_{\RB}(A,M)$ defined by
		\begin{align*}
			&\delta(f)(x_1,\cdots,x_n)\\
			=&-\sum\limits_{\substack{0\le i_1\le \cdots \le i_{k}\le n \\0\le k \le n}}A_{n-k-1}f(x_1, \cdots, T(x_{i_1}),\cdots, T(x_{i_{k}}),\cdots,x_n )\\
			&-\sum\limits_{\substack{0\le i_1\le \cdots \le i_{k}\\0\le k\le n-1}} B_{n-k+1}T\circ f(x_1, \cdots, T(x_{i_1}),\cdots, T(x_{i_{k}}),\cdots,x_n ) ,
		\end{align*}
		for $f\in \mathrm{Hom}(A^{\otimes n},M)$ and $x_1,\dots,x_n \in A$.
		
		Then the  coboundary operator of  cochain complex of the extended Rota-Baxter algebra $(A,T)$ with coefficients in the  $(M,T_M)$ is just $\partial_{\RBA}^{n}(sf,g)=\big(s\partial_\Alg^{n}(f), -\partial_\RB^{n-1}(g)-\delta^n(f)\big)$ for $f\in \mathrm{Hom}( A^{\otimes n},M), g\in  \mathrm{Hom}( A^{\otimes n-1},M)$.
		
		 We compute $n$-cocycles of $\rmC_{\RBA}^*(A,M)$ for small $n$.

		For all $(f,m)\in\Hom (A,M)\oplus M$, $\partial^1_{\RBA} (f,m)=0$ if and only if $\partial^1_\Alg  f=0$ and
		\begin{align}\label{1-cocycle}
		T(x) m-T_M(xm)-mT(x)+T_M(mx)=f(T(x))-T_M(f(x)),\quad \forall x\in A.
     	\end{align}
		
		For all $(f,g)\in\Hom (A\o A,M)\oplus\Hom (A,M) $,  $\partial^2_{\RBA}(f,g)=0$ if and only if $\partial^2_\Alg f=0,$ and
		\begin{align}\label{2-cocycle}
			&-T(x) g(y)+T_M(x g(y))-  g(x)T(y)+T_M(g(x)y )+g(x \star y)\\
			= & -\lambda f(x,y)+f(T(x),T(y))-\mu T_Mf(x,y)-T_M(f(T(x),y))-T_M(f(x,T(y))),\notag
		\end{align}
		for all $x,y\in A.$
	\end{rmk}
	 
    \bigskip
	\section{Deformation theory, abelian extensions and homotopy analogs}\label{sec 6}
	The content of this section is as one would expect of a good cohomology theory and of a good controlling algebra of extended Rota-Baxter algebras.

	\subsection{Formal deformations of extended Rota-Baxter  algebras}\label{subsec 6.1}\
	
	In this subsection, we will study formal deformations of extended Rota-Baxter algebras and interpret  them  via the   low-degree   cohomology groups  of extended Rota-Baxter algebras defined in  Proposition-Definition \ref{RBA homology on M}.

	Let $(A, T)$ be an extended Rota-Baxter algebra of weight $(\mu,\lambda)$.   Consider a 1-parameter family
	\[\mu_t=\sum_{i=0}^\infty \mu_it^i, \ \mu_i\in \mathrm{Hom}( A\o A,A),\quad  T_t=\sum_{i=0}^\infty T_it^i,  \ T_i\in \mathrm{Hom}(A,A).\]
	
	\begin{defn}
		\textbf{A  1-parameter formal deformation} of the   extended Rota-Baxter algebra $(A, \pi,T)$ is a pair $(\mu_t,T_t)$ which endows the free $\bfk[[t]]$-module $A[[t]]$ with an extended  Rota-Baxter algebra structure over $\bfk[[t]]$ such that $T_0=T$ and $\mu_0=\pi$.
	\end{defn}
	
	The power series $\mu_t$ and $ T_t$ determine a  1-parameter formal deformation of the extended Rota-Baxter algebra $(A,T)$ if and only if for any $a,b,c\in A$, the following equations hold :
	\begin{eqnarray*}
		\mu_t(a\ot \mu_t(b\ot c))&=&\mu_t(\mu_t(a\ot b)\ot c),\\
		\mu_t(T_t(a)\ot T_t(b))&=& T_t\Big(\mu_t(a\ot T_t(b))+\mu_t(T_t(a)\ot b)+\mu \mu_t(a\ot b)\Big)+\lambda \mu_t(a\ot b).
	\end{eqnarray*}
	By expanding these equations and comparing the coefficient of $t^n$, we obtain  that $\{\mu_i\}_{i\geqslant0}$ and $\{T_i\}_{i\geqslant0}$ have to  satisfy: for any $n\geqslant 0$,
	\begin{equation}\label{Eq: deform eq for  products in RBA}
		\sum_{i=0}^n\mu_i\circ(\mu_{n-i}\ot \id)=\sum_{i=0}^n\mu_i\circ(\id\ot \mu_{n-i}),\end{equation}
	\begin{equation}\label{Eq: Deform RB operator in RBA} \begin{array}{rcl}
			\sum_{i+j+k=n\atop i, j, k\geqslant 0}	\mu_{i}\circ(T_j\ot T_{k})&=&\sum_{i+j+k=n\atop i, j, k\geqslant 0} T_{i}\circ \mu_j\circ (\id\ot T_{k})\\
			&  &+\sum_{i+j+k=n\atop i, j, k\geqslant 0} T_{i}\circ\mu_j\circ (T_{k}\ot \id)+\mu\sum_{i+j=n\atop i, j \geqslant 0}T_i\circ\mu_{j}+\lambda \mu_n.
	\end{array}\end{equation}
	Obviously, when $n=0$, it is just the extended Rota-Baxter algebra $(A,T)$.
	
	\smallskip
	
	\begin{prop}\label{Prop: Infinitesimal is 2-cocycle}
		Let $(A[[t]],\mu_t,T_t)$ be a  1-parameter formal deformation of the
		extended Rota-Baxter algebra $(A,T)$ of weight $(\mu,\lambda)$. Then
		$(\mu_1,T_1)$ is a 2-cocycle in the cochain complex
		$\rmC_{\RBA}^\bullet(A)$.
	\end{prop}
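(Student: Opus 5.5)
We extract the $t^1$-coefficients of the deformation equations \eqref{Eq: deform eq for  products in RBA} and \eqref{Eq: Deform RB operator in RBA} and compare them with the explicit description of $\partial^2_{\RBA}$. Here $\rmC_{\RBA}^\bullet(A)$ means the complex with coefficients in the regular extended Rota-Baxter bimodule $(A,T)$. By Definition \ref{RBA homology} and the subsequent Remark, $\partial^2_{\RBA}(\mu_1,T_1)=0$ is equivalent to two conditions: $\partial^2_\Alg\mu_1=0$, and identity \eqref{2-cocycle} with $f=\mu_1$, $g=T_1$ and $T_M=T$. We verify them separately.

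For the first condition, take $n=1$ in \eqref{Eq: deform eq for  products in RBA}:
\[\mu_1\circ(\pi\ot\id)+\pi\circ(\mu_1\ot\id)=\mu_1\circ(\id\ot\pi)+\pi\circ(\id\ot\mu_1).\]
Rewritten, this says $x\mu_1(y,z)-\mu_1(xy,z)+\mu_1(x,yz)-\mu_1(x,y)z=0$. That is exactly the Hochschild $2$-cocycle condition, i.e. $\partial^2_\Alg\mu_1=0$ in the sign convention of Definition \ref{cohomology of algebra}, up to an overall sign which is harmless.

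For the second condition, take $n=1$ in \eqref{Eq: Deform RB operator in RBA}. Since $i+j+k=1$, each triple sum has three terms, according to which index equals $1$. Evaluated on $x,y$, the equation becomes
\begin{align*}
&\mu_1(T(x),T(y))+T_1(x)T(y)+T(x)T_1(y)\\
&=T_1\big(xT(y)+T(x)y+\mu xy\big)+T\big(\mu_1(x,T(y))+\mu_1(T(x),y)+\mu\mu_1(x,y)\big)\\
&\quad+T\big(xT_1(y)+T_1(x)y\big)+\lambda\mu_1(x,y).
\end{align*}
We then move terms so that the $T_1$-terms sit on the left and the $\mu_1$-terms on the right. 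Using $x\star y=xT(y)+T(x)y+\mu xy$, the term $T_1(x\star y)$ appears, and we obtain
\begin{align*}
&-T(x)T_1(y)+T(xT_1(y))-T_1(x)T(y)+T(T_1(x)y)+T_1(x\star y)\\
&=-\lambda\mu_1(x,y)+\mu_1(T(x),T(y))-\mu T\mu_1(x,y)-T\mu_1(T(x),y)-T\mu_1(x,T(y)).
\end{align*}
This is precisely \eqref{2-cocycle} with $f=\mu_1$, $g=T_1$ and $T_M=T$. Hence $(\mu_1,T_1)$ is a $2$-cocycle.

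The only delicate point is matching signs. The Remark gives \eqref{2-cocycle} as a restatement of the formula $\partial^2_{\RBA}(sf,g)=\big(s\partial_\Alg f,-\partial_\RB g-\delta f\big)$. So we will also check it directly against the explicit forms of $\partial_\RB$ and of $\delta$ in degree $2$. For $\partial_\RB$ we use the Hochschild differential of $A_\star$ with coefficients in $_\rhd A_\lhd$. For $\delta$ we use the coefficients $A_{-1}=-1$, $A_0=0$, $A_1=\lambda$, $B_1=1$, $B_2=\mu$, which give
\[\delta f(x,y)=f(T(x),T(y))-\lambda f(x,y)-\mu Tf(x,y)-Tf(T(x),y)-Tf(x,T(y)),\]
up to the overall sign convention. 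Even if the signs come out differently, the cocycle condition is a vanishing condition, so a global sign on either component does not affect the conclusion. The real check is the relative sign between the $\partial_\RB$ part and the $\delta$ part, and the displayed expansion confirms it.
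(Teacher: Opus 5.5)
Your proposal is correct and follows the paper's own proof: you extract the $t^1$-coefficients of \eqref{Eq: deform eq for  products in RBA} and \eqref{Eq: Deform RB operator in RBA} and identify them with $\partial^2_{\Alg}(\mu_1)=0$ and with \eqref{2-cocycle} for $(f,g,T_M)=(\mu_1,T_1,T)$, i.e.\ $\delta^2(\mu_1)=-\partial^1_{\RB}(T_1)$. Your expansion and sign bookkeeping check out. The degree-$2$ formula you use for $\delta$ is the correct one, and it agrees with the $l_i$-computation in Proposition~\ref{cochaincomplexad}.
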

	\begin{proof} When $n=1$,   Equations~(\ref{Eq: deform eq for  products in RBA}) and (\ref{Eq: Deform RB operator in RBA})  become
		$$\mu_1\circ(\pi\ot \id)+\pi\circ(\mu_1\ot \id)=\mu_1\circ(\id\ot \pi)+\pi\circ (\id\ot \mu_1)$$
		and
		$$\begin{array}{cl}
			&\mu_1 (T\ot T)-\{T\circ\mu_1\circ(\id\ot T)+T\circ\mu_1\circ(T\ot \id)+\mu T\circ \mu_1+\lambda \mu_1\}\\
			=&T\circ \pi\circ(\id\ot T_1)+T\circ \pi\circ(T_1\ot\id)
			+T_1\circ \pi\circ(\id\ot T)+T_1\circ \pi\circ(T\ot \id)+\mu T_1\circ \pi\\
			&-\pi\circ(T_1\ot T)-\pi\circ(T\ot T_1),
		\end{array}$$
		Note that  the first equation is exactly $\partial_{\Alg}^2(\mu_1)=0\in \C^\bullet_{\Alg}(A)$ and that  second equation is exactly   \[\delta^2(\mu_1)=-\partial_{\RB}^1(T_1) \in \C^\bullet_{\RB}(A).\]
		So $(\mu_1,T_1)$ is a 2-cocycle in $\C^\bullet_{\RBA}(A)$, see Eq. (\ref{2-cocycle}).
	\end{proof}
	
	\smallskip
	
	\begin{defn} The 2-cocycle $(\mu_1,T_1)$ is called the \textbf{infinitesimal} of the 1-parameter formal deformation $(A[[t]],\mu_t,T_t)$ of the extended Rota-Baxter algebra $(A,T)$.
	\end{defn}

	In general, we can rewrite Equations~(\ref{Eq: deform eq for  products in RBA}) and (\ref{Eq: Deform RB operator in RBA}) as
	\begin{eqnarray} \label{Eq: general formal of deform product in RBA} \partial_{\Alg}^2(\mu_n) = \frac{1}{2}\sum_{i=1}^{n-1} [\mu_i, \mu_{n-i}]_{\G},\end{eqnarray}
	\begin{equation} \label{Eq: general formal of deform RBO in RBA}
		\begin{array}{rcl}\partial_{\RB}^{1}(T_n)   +\delta^2(\mu_n)&=& \sum_{i+j+k=n\atop 0 \leqslant i, j, k\leqslant n-1}	\mu_{i}\circ(T_j\ot T_{k})-\sum_{i+j+k=n\atop 0 \leqslant i, j, k\leqslant n-1} T_{i}\circ \mu_j\circ (\id\ot T_{k})\\ &&-\sum_{i+j+k=n\atop 0 \leqslant i, j, k\leqslant n-1} T_{i}\circ\mu_j\circ (T_{k}\ot \id)-\sum_{i+j=n\atop 0 \leqslant i, j\leqslant n-1}T_i\circ\mu_{j}.
	\end{array}\end{equation}
	
	\smallskip
	\begin{defn}
		Let $(A[[t]],\mu_t,T_t)$ and $(A[[t]],\mu_t',T_t')$ be two 1-parameter formal deformations of the extended Rota-Baxter algebra $(A,T)$.\textbf{ A formal isomorphism} from $(A[[t]],\mu_t',T_t')$ to $(A[[t]], \mu_t, T_t)$ is a power series $\psi_t=\sum_{i=0}\psi_it^i: A[[t]]\rightarrow A[[t]]$, where $\psi_i: A\rightarrow A$ are linear maps with $\psi_0=\id_A$, such that
		\begin{eqnarray}\label{Eq: equivalent deformations}\psi_t\circ \mu_t' &=& \mu_t\circ (\psi_t\ot \psi_t),\\
			\psi_t\circ T_t'&=&T_t\circ\psi_t. \label{Eq: equivalent deformations2}
		\end{eqnarray}
		In this case, we say that 1-parameter formal deformations $(A[[t]], \mu_t,T_t)$ and
		$(A[[t]],\mu_t',T_t')$ are  equivalent.
	\end{defn}

	\smallskip

	Given an extended Rota-Baxter algebra $(A,T)$, the power series $\mu_t,T_t$
	with $\mu_i=0, T_i=0$ for $i\ge 1$ make
	$(A[[t]],\mu_t,T_t)$ into a $1$-parameter formal deformation of
	$(A,T)$. The formal deformations equivalent to this one are called \textbf{trivial}.
	\smallskip
	
	\begin{thm}
		The infinitesimals of two equivalent 1-parameter formal deformations of $(A,T)$ are in the same cohomology class in $\rmH^2_{\RBA}(A)$.
	\end{thm}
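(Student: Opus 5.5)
Let $\psi_t=\sum_{i\ge0}\psi_it^i$ be a formal isomorphism from $(A[[t]],\mu_t',T_t')$ to $(A[[t]],\mu_t,T_t)$. I will show that
\[(\mu_1',T_1')-(\mu_1,T_1)=\partial^1_{\RBA}(\psi_1,0)\]
up to the global sign conventions of Definition \ref{RBA homology}. Here $(\psi_1,0)\in \rmC^1_{\RBA}(A)=\Hom(A,A)\oplus\Hom(A^{\otimes 0},A)$. Since both infinitesimals are 2-cocycles by Proposition \ref{Prop: Infinitesimal is 2-cocycle}, this gives equality of their classes in $\rmH^2_{\RBA}(A)$.

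First, expand \eqref{Eq: equivalent deformations} and \eqref{Eq: equivalent deformations2} and compare coefficients of $t^1$, using $\psi_0=\id$, $\mu_0=\mu_0'=\pi$ and $T_0=T_0'=T$. The product equation gives
\[\mu_1'-\mu_1=\pi\circ(\psi_1\ot\id)+\pi\circ(\id\ot\psi_1)-\psi_1\circ\pi.\]
Up to the sign fixed by $\partial_{\RBA}^1(sf,g)=(s\partial_\Alg^1 f,\dots)$, the right-hand side is the Hochschild coboundary $\partial^1_\Alg(\psi_1)$ from Definition \ref{cohomology of algebra}. The operator equation gives
\[T_1'-T_1=T\circ\psi_1-\psi_1\circ T.\]

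Second, I will identify $T\circ\psi_1-\psi_1\circ T$ with the second component $-\partial^0_\RB(0)-\delta^1(\psi_1)=-\delta^1(\psi_1)$. I specialize the formula for $\delta$ to $n=1$, using $A_{-1}=-1$, $A_0=0$ and $B_1=1$ from Remark \ref{coefficient}. The terms are as follows: $k=0$ contributes $-A_0 f=0$; $k=1$ contributes $-A_{-1}f\circ T=f\circ T$; the $T\circ f$ term contributes $-B_1\,T\circ f$. Hence $\delta^1(f)=f\circ T-T\circ f$, and therefore $-\delta^1(\psi_1)=T\circ\psi_1-\psi_1\circ T=T_1'-T_1$. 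As a consistency check, this matches the 1-cocycle condition \eqref{1-cocycle} with $m=0$.

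The main obstacle is sign bookkeeping. The paper has two conventions: Proposition \ref{cochaincomplexad} gives $(-s\partial_\Alg,\dots)$, while Definition \ref{RBA homology} gives $(s\partial_\Alg,-\partial_\RB-\delta)$. The index range in $\delta$ also has to be read correctly at $n=1$. I will fix the convention of Definition \ref{RBA homology} and verify directly that both components of $\partial^1_{\RBA}(\psi_1,0)$ agree with $(\mu_1'-\mu_1,\,T_1'-T_1)$, replacing $\psi_1$ by $-\psi_1$ if necessary. Whichever sign occurs, the difference of the infinitesimals is a coboundary, which proves the theorem.
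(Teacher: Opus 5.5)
Your proposal is correct and is essentially the paper's proof. Comparing $t^1$-coefficients gives $\mu_1'-\mu_1=\partial^1_{\Alg}(\psi_1)$ and $T_1'-T_1=T\circ\psi_1-\psi_1\circ T=-\delta^1(\psi_1)$, so the difference of the infinitesimals is $\partial^1_{\RBA}(\psi_1,0)$. With the convention of Definition \ref{RBA homology} no sign adjustment is needed, and your value $\delta^1(f)=f\circ T-T\circ f$ is correct: it is read off from $l_1(sf)+l_2(sf,T)$ with coefficients $A_0$, $A_{-1}$, $B_1$, whereas the closed formula for $\delta$ in the remark would literally give $B_2$ at $n=1$.
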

	
	\begin{proof} Let $\psi_t:(A[[t]],\mu_t',T_t')\rightarrow (A[[t]],\mu_t,T_t)$ be a formal isomorphism.
		Expanding the identities and collecting coefficients of $t$, we get from Equations~(\ref{Eq: equivalent deformations}) and (\ref{Eq: equivalent deformations2}):
		\begin{eqnarray*}
			\mu_1'&=&\mu_1+\pi\circ(\id\ot \psi_1)-\psi_1\circ\pi+\pi\circ(\psi_1\ot \id),\\
			T_1'&=&T_1+T\circ\psi_1-\psi_1\circ T,
		\end{eqnarray*}
		that is, we have\[(\mu_1',T_1')-(\mu_1,T_1)=(\partial_{\Alg}^1(\psi_1), -\delta^1(\psi_1))=\partial_{\RBA}^1(\psi_1,0)\in  \C^2_{\RBA}(A),\] see Eq. (\ref{1-cocycle}).
	\end{proof}
	
	\smallskip
	
	\begin{defn}
		An extended Rota-Baxter algebra $(A,T)$ is said to be \textbf{rigid} if every 1-parameter formal deformation is trivial.
	\end{defn}
	
	\begin{thm}
		Let $(A,T)$ be an extended Rota-Baxter algebra of weight $(\mu,\lambda)$. If $\rmH^2_{\RBA}(A)=0$, then $(A,T)$ is rigid.
	\end{thm}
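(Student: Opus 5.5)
The plan is the standard Gerstenhaber argument: kill the lowest nonzero term of the deformation by a formal isomorphism, and iterate. Let $(A[[t]],\mu_t,T_t)$ be a 1-parameter formal deformation of $(A,T)$. Suppose it is already of the form $\mu_t=\pi+\sum_{i\ge r}\mu_i t^i$ and $T_t=T+\sum_{i\ge r}T_i t^i$ for some $r\ge 1$. At the start $r=1$ with no further assumption.

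\textbf{Step 1: the leading term is a cocycle.} Compare coefficients of $t^r$ in Equations~(\ref{Eq: general formal of deform product in RBA}) and (\ref{Eq: general formal of deform RBO in RBA}). Every quadratic or cubic term on the right-hand side involves some $\mu_i$ or $T_i$ with $1\le i\le r-1$, and these vanish. Hence
\[
\partial^2_\Alg(\mu_r)=0,\qquad \partial^1_\RB(T_r)+\delta^2(\mu_r)=0.
\]
This is the same computation as in Proposition~\ref{Prop: Infinitesimal is 2-cocycle}, shifted from degree $1$ to degree $r$. So $(\mu_r,T_r)$ is a $2$-cocycle in $\rmC^\bullet_\RBA(A)$.

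\textbf{Step 2: the leading term is a coboundary.} Since $\rmH^2_\RBA(A)=0$, there is $(\psi,m)\in \rmC^1_\RBA(A)=\Hom(A,A)\oplus A$ with $(\mu_r,T_r)=\partial^1_\RBA(\psi,m)$. The argument needs the $A$-component $m$ to contribute nothing, so that the coboundary comes from a linear map alone. This is the step I expect to be the main obstacle, because the $m$-part of $\partial^1_\RBA$ lands in the $T$-component through $\partial^0_\RB(m)$. I would handle it as follows. Since $\partial^1_\RBA(0,m)=(0,\mp\partial^0_\RB(m))$, one has to either absorb this term or show it is harmless. Concretely, $\partial^0_\RB(m)(x)=x\rhd m-m\lhd x=T(x)m-T(xm)-mT(x)+T(mx)$. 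The plan is to replace $\psi$ by $\psi'=\psi+\mathrm{ad}$-type correction built from $m$, namely $x\mapsto \pm(xm-mx)$. This correction has $\partial^1_\Alg=0$, being an inner derivation, and $\delta^1$ of it should equal $\partial^0_\RB(m)$ up to sign, by the computation of Eq.~(\ref{1-cocycle}) with $f$ the inner derivation. Thus we may assume $(\mu_r,T_r)=\partial^1_\RBA(\psi,0)=(\partial^1_\Alg\psi,-\delta^1\psi)$. If instead the signs do not match, I would still get the needed form by working directly with Eq.~(\ref{1-cocycle}).

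\textbf{Step 3: the gauge step.} Set $\psi_t=\id_A-\psi t^r$, which is invertible over $\bfk[[t]]$. Define the transported structure
\[
\mu'_t=\psi_t^{-1}\circ\mu_t\circ(\psi_t\ot\psi_t),\qquad T'_t=\psi_t^{-1}\circ T_t\circ\psi_t.
\]
This is an equivalent deformation. Expanding to order $t^r$, exactly as in the proof of the preceding theorem, gives $\mu'_i=0$ and $T'_i=0$ for $1\le i\le r-1$. At order $r$,
\[
(\mu'_r,T'_r)=(\mu_r,T_r)-\partial^1_\RBA(\psi,0)=0.
\]
Hence the new deformation starts at order $r+1$.

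\textbf{Step 4: passing to the limit.} Iterating gives formal isomorphisms $\Psi^{(r)}=\psi^{(1)}_t\circ\cdots\circ\psi^{(r)}_t$. Since $\psi^{(r)}_t\equiv\id \pmod{t^r}$, the composites stabilize coefficientwise. The infinite product $\Psi_t=\prod_r\psi^{(r)}_t$ therefore converges in the $t$-adic topology, has $\Psi_0=\id_A$, and carries $(\mu_t,T_t)$ to the undeformed structure $(\pi,T)$. So every deformation is trivial, and $(A,T)$ is rigid.
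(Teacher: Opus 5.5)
Your proposal is correct and follows the paper's proof. The paper likewise absorbs the $\Hom(\bfk,A)$-component of the primitive into the linear map by setting $\psi_1=\psi_1'+\partial^0_{\Alg}(x)$ (using $\delta^1\partial^0_{\Alg}=\partial^0_{\RB}$), gauges by $\psi_t=\Id_A-\psi_1 t$, and iterates. Your Steps 1 and 4 simply spell out the order-$r$ inductive step and the $t$-adic convergence of the composed formal isomorphisms, which the paper leaves as ``by induction''.
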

	
	\begin{proof}Let $(A[[t]], \mu_t, T_t)$ be a $1$-parameter formal deformation of $(A,T)$. By Proposition~\ref{Prop: Infinitesimal is 2-cocycle},
		$(\mu_1, T_1)$ is a $2$-cocycle. By $\rmH^2_{\RBA}(A)=0$, there exists a $1$-cochain $$(\psi_1', x) \in \C^1_\RBA(A)= C^1_{\Alg}(A)\oplus \Hom(\bfk, A)$$ such that
		$(\mu_1, T_1) = \partial_{\RBA}	(\psi_1', x), $
		that is, $\mu_1=\partial_{\Alg}^1(\psi_1')$ and $T_1=-\partial_{\RB}^0(x)-\delta^1(\psi_1')$. Let $\psi_1=\psi_1'+\partial_{\Alg}^0(x)$. Then
		$\mu_1= \partial_{\Alg}^1(\psi_1)$ and $T_1=-\delta^1(\psi_1)$, as it can be readily seen that $\delta^1(\partial_{\Alg}^0(x))=\partial_{\RB}^0(x)$.
		
		Setting $\psi_t = \Id_A -\psi_1t$, we have a deformation $(A[[t]], \overline{\mu}_t, \overline{T}_t)$, where
		$$\overline{\mu}_t=\psi_t^{-1}\circ \mu_t\circ (\psi_t\times \psi_t)$$
		and $$\overline{T}_t=\psi_t^{-1}\circ T_t\circ \psi_t.$$
		It can be easily verifed  that $\overline{\mu}_1=0, \overline{T}_1=0$. Then
		$$\begin{array}{rcl} \overline{\mu}_t&=& \pi+\overline{\mu}_2t^2+\cdots,\\
			\overline{T}_t&=& T+\overline{T}_2t^2+\cdots.\end{array}$$
		By Equations~(\ref{Eq: general formal of deform product in RBA}) and (\ref{Eq: general formal of deform RBO in RBA}), we see that $(\overline{\mu}_2,  \overline{T}_2)$ is still a $2$-cocycle, so by induction, we can show that
		$ (A[[t]], \mu_t , T_t) $ is equivalent to the trivial  deformation $(A[[t]],\pi,T).$
		Thus, $(A,T)$ is rigid.
		
	\end{proof}

	\medskip
	
	\subsection{Abelian extensions of extended Rota-Baxter algebras}\label{subsec 6.2}\
	
	In this subsection, we study abelian extensions of extended Rota-Baxter algebras and show that they are classified by the second cohomology group.

	Notice that a vector space $M$ together with a linear transformation $T_M:M\to M$ is naturally an extended Rota-Baxter algebra where the multiplication on $M$ is defined to be $uv=0$ for all $u,v\in M.$
	
	\begin{defn}
		\textbf{An   abelian extension}  of extended Rota-Baxter algebras is a short exact sequence of  morphisms of extended Rota-Baxter algebras
		\begin{eqnarray}\label{Eq: abelian extension} 0\to (M, T_M)\stackrel{i}{\to} (\hat{A}, \hat{T})\stackrel{p}{\to} (A, T)\to 0,
		\end{eqnarray}
		that is, there exists a commutative diagram:
		\[\begin{CD}
			0@>>> {M} @>i >> \hat{A} @>p >> A @>>>0\\
			@. @V {T_M} VV @V {\hat{T}} VV @V T VV @.\\
			0@>>> {M} @>i >> \hat{A} @>p >> A @>>>0,
		\end{CD}\]
		where the extended Rota-Baxter algebra $(M, T_M)$	satisfies  $uv=0$ for all $u,v\in M.$
		
		We will call $(\hat{A},\hat{T})$ an abelian extension of $(A,T)$ by $(M,T_M)$.
	\end{defn}

	\begin{defn}
		Let $(\hat{A}_1,\hat{T}_1)$ and $(\hat{A}_2,\hat{T}_2)$ be two abelian extensions of $(A,T)$ by $(M,T_M)$. They are said to be  \textbf{isomorphic}  if there exists an isomorphism of extended Rota-Baxter algebras $\zeta:(\hat{A}_1,\hat{T}_1)\rar (\hat{A}_2,\hat{T}_2)$ such that the following commutative diagram holds:
		\begin{eqnarray}\label{Eq: isom of abelian extension}\begin{CD}
				0@>>> {(M,T_M)} @>i >> (\hat{A}_1,{\hat{T}_1}) @>p >> (A,T) @>>>0\\
				@. @| @V \zeta VV @| @.\\
				0@>>> {(M,T_M)} @>i >> (\hat{A}_2,{\hat{T}_2}) @>p >> (A,T) @>>>0.
		\end{CD}\end{eqnarray}
	\end{defn}
	
	A   section of an abelian extension $(\hat{A},{\hat{T}})$ of $(A,T)$ by $(M,T_M)$ is a linear map $s:A\rar \hat{A}$ such that $p\bar{\circ} s=\Id_A$. We identify $M$ with $i(M).$
	
	\bigskip
	
	Let    $(\hat{A},\hat{T})$ be  an abelian extension of $(A,T)$ by $(M,T_M)$ having the form \eqref{Eq: abelian extension}. Choose a section $s:A\rar \hat{A}$. We   define
	$$
	am:=s(a)m,\quad ma:=ms(a), \quad \forall a\in A, m\in M.
	$$
	\begin{prop}\label{Prop: new RB bimodules from abelian extensions}
		With the above notation, $(M, T_M)$ is an extended Rota-Baxter  bimodule over $(A,T)$.
	\end{prop}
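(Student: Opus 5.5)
We need to show three things: the actions $am:=s(a)m$, $ma:=ms(a)$ are well defined (independent of the choice of section $s$), they make $M$ an $A$-bimodule, and the two identities of Definition~\ref{Def: extended Rota-Baxter bimodules} hold.

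\emph{Independence of the section.} If $s'$ is another section, then $p(s(a)-s'(a))=0$, so $s(a)-s'(a)\in M$. Since $uv=0$ for $u,v\in M$, we get $s(a)m=s'(a)m$, and similarly on the right. Thus the actions do not depend on $s$.

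\emph{Bimodule axioms.} Since $M=i(M)=\ker p$ is a two-sided ideal of $\hat{A}$, both $s(a)m$ and $ms(a)$ lie in $M$. We have $p(s(a)s(b)-s(ab))=ab-ab=0$, so $s(a)s(b)-s(ab)\in M$. Because $M$ squares to zero, this gives
$$(ab)m=s(ab)m=s(a)s(b)m=a(bm).$$
The identities $m(ab)=(ma)b$ and $(am)b=a(mb)$ follow in the same way from associativity of $\hat{A}$.

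\emph{The extended Rota-Baxter identities.} This is the step that needs a little care.

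First, $\hat{T}(s(a))-s(T(a))\in M$, since $p\hat{T}=Tp$. Multiplying this difference by an element of $M$ gives zero, so
$$T(a)\,T_M(m)=s(T(a))\,T_M(m)=\hat{T}(s(a))\,T_M(m).$$
Here we identify $T_M$ with $\hat{T}|_M$, using $\hat{T}\circ i=i\circ T_M$.

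Next, apply the extended Rota-Baxter identity of $\hat{T}$ on $\hat{A}$ to the pair $s(a)$ and $m\in M$:
$$\hat{T}(s(a))\hat{T}(m)=\hat{T}\bigl(\hat{T}(s(a))m+s(a)\hat{T}(m)+\mu s(a)m\bigr)+\lambda s(a)m.$$
Each argument inside $\hat{T}$ lies in $M$, where $\hat{T}$ acts as $T_M$. Moreover $\hat{T}(s(a))m=T(a)m$ by the same replacement as above. Substituting, we obtain
$$T(a)T_M(m)=T_M\bigl(T(a)m+aT_M(m)+\mu am\bigr)+\lambda am.$$
The right-hand identity is proved symmetrically, using the pair $(m,s(a))$.

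The only subtle point in the whole argument is justifying the replacement of $\hat{T}(s(a))$ by $s(T(a))$ when multiplying against elements of $M$. This follows from the commutativity $p\hat{T}=Tp$ together with $M^2=0$.
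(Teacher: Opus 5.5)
Your proof is correct and is essentially the paper's argument. Both use that $s(a)s(b)-s(ab)$ and $\hat{T}(s(a))-s(T(a))$ lie in the square-zero ideal $M=\ker p$, which gives associativity of the actions and lets you replace $s(T(a))$ by $\hat{T}(s(a))$; the bimodule identity is then read off from the extended Rota-Baxter identity of $\hat{T}$ applied to $(s(a),m)$. Your one addition, the independence-of-section check, is proved separately in the paper, in part (i) of its later proposition on different choices of sections.
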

	\begin{proof}
		For any $a,b\in A,\,m\in M$, since $s(ab)-s(a)s(b)\in M$ implies $s(ab)m=s(a)s(b)m$, we have
		\[ (ab)  m=s(ab)m=s(a)s(b)m=a(bm).\]
		Hence,  this gives a left $A$-module structure and the case of right module structure is similar.

		Moreover, ${\hat{T}}(s(a))-s(T(a))\in M$ means that  ${\hat{T}}(s(a))m=s(T(a))m$. Thus we have
		\begin{align*}
			T(a)T_M(m)&=s(T(a))T_M(m)\\
			&=\hat{T}(s(a))T_M(m)\\
			&=\hat{T}(\hat{T}(s(a))m+s(a)T_M(m)+\mu s(a)m)+\lambda s(a)m \\
			&=T_M(T(a)m+aT_M(m)+\mu am)+\lambda am
		\end{align*}
		It is similar to see $T_M(m)T(a)=T_M(T_M(m)a+mT(a)+\mu ma) +\lambda ma$.
		
		Hence, $(M, T_M)$ is an extended  Rota-Baxter  bimodule over $(A,T)$.
	\end{proof}
	
	We  further  define linear maps $\psi:A\ot A\rar M$ and $\chi:A\rar M$ respectively by
	\begin{align*}
		\psi(a\ot b)&=s(a)s(b)-s(ab),\quad\forall a,b\in A,\\
		\chi(a)&={\hat{T}}(s(a))-s(T(a)),\quad\forall a\in A.
	\end{align*}

	\begin{prop}\label{psi and chi}
		The pair
		$(\psi,\chi)$ is a 2-cocycle  of  the extended Rota-Baxter algebra $(A,T)$ with  coefficients  in the extended Rota-Baxter bimodule $(M,T_M)$ introduced in  Proposition~\ref{Prop: new RB bimodules from abelian extensions}.
	\end{prop}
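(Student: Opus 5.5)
By the description of $2$-cocycles in the Remark following Proposition-Definition \ref{RBA homology on M}, it suffices to verify two identities: $\partial^2_\Alg\psi=0$, and Eq.~(\ref{2-cocycle}) with $f=\psi$ and $g=\chi$. Throughout we use that $M\subseteq\hat A$ is a square-zero ideal, so $mm'=0$ for $m,m'\in M$. We also use the facts from the proof of Proposition~\ref{Prop: new RB bimodules from abelian extensions} that $s(a)m=am$ and $\hat T(s(a))m=T(a)m$, together with $\hat T|_M=T_M$.

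\emph{Hochschild part.} Expand the associativity identity $(s(a)s(b))s(c)=s(a)(s(b)s(c))$ in $\hat A$. Substitute $s(a)s(b)=s(ab)+\psi(a,b)$ on both sides and apply it again to $s(ab)s(c)$ and $s(a)s(bc)$. The $s(abc)$ terms cancel, leaving
$$\psi(a,b)c+\psi(ab,c)=a\psi(b,c)+\psi(a,bc).$$
This is exactly $\partial^2_\Alg\psi(a,b,c)=0$ with the sign conventions of Definition~\ref{cohomology of algebra}. This step is standard.

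\emph{Operator part.} Apply the extended Rota--Baxter identity of $\hat T$ to $s(a),s(b)$:
$$\hat T(s(a))\hat T(s(b))=\hat T\big(\hat T(s(a))s(b)+s(a)\hat T(s(b))+\mu s(a)s(b)\big)+\lambda s(a)s(b).$$
Then write $\hat T(s(x))=s(T(x))+\chi(x)$ and $s(x)s(y)=s(xy)+\psi(x,y)$ everywhere, dropping products of two elements of $M$.

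The left side becomes
$$s(T(a)T(b))+\psi(T(a),T(b))+T(a)\chi(b)+\chi(a)T(b).$$
Inside $\hat T(\cdots)$ on the right we obtain
$$s\big(T(a)b+aT(b)+\mu ab\big)+\psi(T(a),b)+\psi(a,T(b))+\mu\psi(a,b)+\chi(a)b+a\chi(b).$$
Applying $\hat T$ gives
$$s\big(T(T(a)b+aT(b)+\mu ab)\big)+\chi(a\star b)+T_M\big(\psi(T(a),b)+\psi(a,T(b))+\mu\psi(a,b)+\chi(a)b+a\chi(b)\big).$$
The remaining term is $\lambda s(ab)+\lambda\psi(a,b)$.

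Project to $A$ using the identity for $T$: the $s(\cdot)$-terms cancel against each other. What remains is an identity in $M$:
$$\psi(T a,T b)+T(a)\chi(b)+\chi(a)T(b)=\chi(a\star b)+T_M(\psi(Ta,b)+\psi(a,Tb))+\mu T_M\psi(a,b)+T_M(\chi(a)b+a\chi(b))+\lambda\psi(a,b).$$
Rearranging this identity should give precisely Eq.~(\ref{2-cocycle}) with $f=\psi$ and $g=\chi$, since the left side of that equation is $-\partial^1_\RB(g)$ written via $\rhd,\lhd,\star$.

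The main obstacle is the sign bookkeeping. The rearranged identity must match Eq.~(\ref{2-cocycle}) exactly, including the overall sign convention $\partial_\RBA(sf,g)=(s\partial_\Alg f,-\partial_\RB g-\delta f)$ and the low-degree formula for $\delta^2$, namely $-\lambda f+f(T,T)-\mu T_Mf-T_M(f(T,\cdot)+f(\cdot,T))$. If a global sign discrepancy appears, the first fix to try is replacing $\chi$ by $-\chi$, or equivalently $\psi$ by $-\psi$, since either choice of normalization is cohomologically harmless. I would compare coefficient by coefficient against the proof of Proposition~\ref{Prop: Infinitesimal is 2-cocycle}, whose $n=1$ deformation equation has exactly this shape.
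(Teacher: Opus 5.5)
Your proposal is correct and follows essentially the same route as the paper: expand associativity and the extended Rota--Baxter identity of $\hat T$ on $s(a),s(b)$ using $s(x)s(y)=s(xy)+\psi(x,y)$ and $\hat T s=sT+\chi$, cancel the $s(\cdot)$-terms via the identity for $T$, and read off $\partial^2_\Alg\psi=0$ and $\partial^1_\RB\chi+\delta^2\psi=0$. The sign worry is unfounded: moving all terms of your final $M$-valued identity to one side gives exactly Eq.~(\ref{2-cocycle}) with $f=\psi$, $g=\chi$ (its left side is $-\partial^1_\RB\chi$ and its right side is $\delta^2\psi$), so no renormalization is needed. This matters, because flipping the sign of only one of $\psi,\chi$ would have changed the cochain in the statement.
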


	\begin{proof}
		
		To see $(\psi,\chi)$ is a  2-cocycle, it suffices to check the identities in Eq. (\ref{2-cocycle}).
		
		For the first identity, 
		\begin{align*}
			&a\psi(b\ot c)-\psi(ab\ot c)+\psi(a\ot bc)-\psi(a\ot b)c\\
			=&a(s(b)s(c)-s(bc))-s(ab)s(c)+s(abc)+s(a)s(bc)-s(abc)-(s(a)s(b)-s(ab))c\\
			=&s(a)s(b)s(c)-s(a)s(bc)-s(ab)s(c)+s(a)s(bc)-s(a)s(b)s(c)+s(ab)s(c)\\
			=&0
		\end{align*}
		Thus $\partial^2_\Alg (\psi)=0$.
		
		For the second identity, 
		\begin{align*}
			&T(a)\chi(b)+\chi(a)T(b)+\psi(T(a)\ot T(b))\\
			=&s(T(a))({\hat{T}}(s(b))-s(T(b)))+({\hat{T}}(s(a))-s(T(a)))s(T(b))+s(T(a))s(T(b))-s(T(a)T(b))\\
			=&\hat{T}(s(a))({\hat{T}}(s(b))-s(T(b)))+({\hat{T}}(s(a))-s(T(a)))\hat{T}(s(b))+s(T(a))s(T(b))-s(T(a)T(b))\\
			=&2\hat{T}(s(a))\hat{T}(s(b))-\hat{T}(s(a))s(T(b))-s(T(a))\hat{T}(s(b))+s(T(a))s(T(b))-s(T(a)T(b))\\
			=&2\hat{T}(s(a))\hat{T}(s(b))-\hat{T}(s(a))s(T(b))-\hat{T}(s(a))(\hat{T}(s(b))-s(T(b)))-s(T(a)T(b))\\
			=&\hat{T}(s(a))\hat{T}(s(b))-s(T(a)T(b))
		\end{align*}
		
		\begin{align*}
			&T_M(\chi(a)b)+T_M(\psi(T(a)\ot b))+\chi(T(a)b)+T_M(a\chi(b))+T_M(\psi(a\ot T(b)))+\chi(aT(b))\\
			&+\mu T_M(\psi(a\ot b))+\lambda \psi(a\ot b)+\mu \chi(ab)\\
			=&\hat{T}((\hat{T}(s(a))-s(T(a)))s(b))+\hat{T}(s(T(a))s(b)-s(T(a)b))+{\hat{T}}(s(T(a)b))-s(T(T(a)b))\\
			&+\hat{T}(s(a)({\hat{T}}(s(b))-s(T(b))))+\hat{T}(s(a)s(T(b))-s(aT(b)))+{\hat{T}}(s(aT(b)))-s(T(aT(b)))\\
			&+\mu \hat{T}(s(a)s(b)-s(ab))+\lambda(s(a)s(b)-s(ab))+\mu ({\hat{T}}(s(ab))-s(T(ab)))\\
			=&\hat{T}(\hat{T}(s(a))s(b)+s(a)\hat{T}(s(b))+\mu s(a)s(b))+\lambda s(a)s(b)\\
			&-s(T(aT(b)+T(a)b+\mu ab))-\lambda s(ab)\\
			=&T(a)\chi(b)+\chi(a)T(b)+\psi(T(a)\ot T(b))
		\end{align*}
		
		Thus \[ \partial_{\RB}^1(\chi)+\delta^2(\psi)=0.\]
		
	\end{proof}

	The choice of the section $s$ in fact determines a splitting
	$$\xymatrix{0\ar[r]&  M\ar@<1ex>[r]^{i} &\hat{A}\ar@<1ex>[r]^{p} \ar@<1ex>[l]^{t}& A \ar@<1ex>[l]^{s} \ar[r] & 0}$$
	subject to $t\circ i=\Id_M, t\circ s=0$ and $ it+sp=\Id_{\hat{A}}$.
	Then there is an induced isomorphism of vector spaces
	$$\left(\begin{array}{cc} p& t\end{array}\right): \hat{A}\cong   A\oplus M: \left(\begin{array}{c} s\\ i\end{array}\right).$$
	We now  transfer the extended Rota-Baxter algebra structure on $\hat{A}$ to $A\oplus M$ via this isomorphism. 	Since the following  proposition is obtained by straightforward  computation, we omit the details for brevity.
	\begin{prop}\label{product and operator}
		It is direct to verify that this  endows $A\oplus M$ with an associative product $\cdot_\psi$ and an extended Rota-Baxter operator $T_\chi$ defined by
		\begin{align}
			\label{eq:mul}(a,m)\cdot_\psi(b,n)&=(ab,an+mb+\psi(a,b)),\,\forall a,b\in A,\,m,n\in M,\\
			\label{eq:dif}T_\chi(a,m)&=(T(a),\chi(a)+T_M(m)),\,\forall a\in A,\,m\in M.
		\end{align}
		Moreover, we get an abelian extension
		$$0\to (M, T_M)\stackrel{\left(\begin{array}{cc} 0& 1\end{array}\right) }{\to} (A\oplus M, T_\chi)\stackrel{\left(\begin{array}{c} 1\\ 0\end{array}\right)}{\to} (A, T)\to 0$$
		which is easily seen to be  isomorphic to the original one \eqref{Eq: abelian extension}.
	\end{prop}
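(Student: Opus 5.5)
The plan is to transport the structure of $\hat{A}$ along the linear isomorphism $\left(\begin{array}{cc} p& t\end{array}\right)$ with inverse $\left(\begin{array}{c} s\\ i\end{array}\right)$. Concretely, $(a,m)$ corresponds to $s(a)+m$. The transported product and operator are then automatically associative and extended Rota-Baxter of weight $(\mu,\lambda)$, since being an extended Rota-Baxter algebra is preserved under isomorphism. It therefore suffices to show that the transported product and operator are exactly $\cdot_\psi$ and $T_\chi$.

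For the product, expand $(s(a)+m)(s(b)+n)=s(a)s(b)+s(a)n+ms(b)+mn$. Here $mn=0$ because the multiplication on $M$ is zero. Since $s(a)s(b)=s(ab)+\psi(a\otimes b)$ and the actions are defined by $an=s(a)n$ and $mb=ms(b)$, this equals $s(ab)+\big(an+mb+\psi(a,b)\big)$. Applying $\left(\begin{array}{cc} p& t\end{array}\right)$ gives $(ab,an+mb+\psi(a,b))$, using $p s=\Id_A$, $ps$-compatibility $p(M)=0$, $t s=0$ and $t i=\Id_M$.

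For the operator, $\hat{T}(s(a)+m)=\hat{T}(s(a))+\hat{T}(m)$. By definition of $\chi$, $\hat{T}(s(a))=s(T(a))+\chi(a)$. Commutativity of the left square of the diagram gives $\hat{T}(m)=T_M(m)$. Transporting yields $(T(a),\chi(a)+T_M(m))$, which is $T_\chi$.

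As a sanity check, these identities are also consistent with Proposition~\ref{psi and chi}. One could alternatively verify associativity directly from $\partial^2_\Alg\psi=0$, and the extended Rota-Baxter identity for $T_\chi$ from the second cocycle identity together with the bimodule axioms of Proposition~\ref{Prop: new RB bimodules from abelian extensions}. The transport argument avoids this computation.

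It remains to check that the sequence is an abelian extension isomorphic to \eqref{Eq: abelian extension}. The inclusion $m\mapsto(0,m)$ and the projection $(a,m)\mapsto a$ are algebra maps intertwining the operators, since $T_\chi(0,m)=(0,T_M(m))$ and the first component of $T_\chi(a,m)$ is $T(a)$. Exactness is clear. The map $\zeta=\left(\begin{array}{cc} p& t\end{array}\right):\hat{A}\to A\oplus M$ is by construction an isomorphism of extended Rota-Baxter algebras. It satisfies $\zeta i=(0,\Id_M)$ because $pi=0$ and $ti=\Id$, and the projection composed with $\zeta$ equals $p$. So diagram \eqref{Eq: isom of abelian extension} commutes.

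The only mildly delicate point is bookkeeping the identities $ti=\Id$, $ts=0$, $it+sp=\Id$ and $pi=0$. I do not expect a genuine obstacle.
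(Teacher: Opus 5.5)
Your proposal is correct and is essentially the paper's argument: the paper likewise transfers the extended Rota-Baxter structure of $\hat{A}$ to $A\oplus M$ along the isomorphism $x\mapsto (p(x),t(x))$, with inverse $(a,m)\mapsto s(a)+i(m)$, and omits the computation that you carry out. You also use, without stating it, that $s(a)n$, $ms(b)$ and $\chi(a)$ lie in $M=\ker p$; this holds because $\ker p$ is an ideal and $p\hat{T}=Tp$.
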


    By Propositions \ref{psi and chi} and \ref{product and operator}, we have:
   \begin{cor}\label{prop:2-cocycle}
   	The triple $(A\oplus M,\cdot_\psi,T_\chi)$ is an extended Rota-Baxter algebra   if and only if
   	$(\psi,\chi)$ is a 2-cocycle  of the extended Rota-Baxter algebra $(A,T)$ with  coefficients  in $(M,T_M)$.
   \end{cor}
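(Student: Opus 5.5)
We must show: $(A\oplus M,\cdot_\psi,T_\chi)$ is an extended Rota-Baxter algebra if and only if $(\psi,\chi)$ is a 2-cocycle, i.e. satisfies the two conditions of Eq.~(\ref{2-cocycle}) with coefficients in $(M,T_M)$. The plan is a direct comparison of components. Both defining identities, associativity of $\cdot_\psi$ and the extended Rota-Baxter identity for $T_\chi$, are checked on pairs $(a,m),(b,n)$, and where needed $(c,p)$. By linearity we split the check into the case where all inputs lie in $A$ and the cases where at least one input lies in $M$.

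First I handle associativity. For inputs $(a,0),(b,0),(c,0)$ the $A$-component is associativity of $A$. The $M$-component reads
\[
a\psi(b,c)+\psi(a,bc)=\psi(a,b)c+\psi(ab,c).
\]
This is exactly $\partial^2_\Alg(\psi)=0$, the first half of the computation in Proposition~\ref{psi and chi}, run in reverse. If any input lies in $M$, the terms involving $\psi$ vanish because $\psi$ only sees the $A$-components. The identity then reduces to the $A$-bimodule axioms, which hold by hypothesis. Hence $\cdot_\psi$ is associative if and only if $\partial^2_\Alg\psi=0$.

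Next I handle the operator. The key fact is that $M\cdot_\psi M=0$. When one input is $(0,m)$, $T_\chi(0,m)=(0,T_M(m))$, and the identity
\[
T_\chi(x)\cdot_\psi T_\chi(y)=T_\chi\big(T_\chi(x)\cdot_\psi y+x\cdot_\psi T_\chi(y)+\mu\, x\cdot_\psi y\big)+\lambda\, x\cdot_\psi y
\]
reduces to the two axioms of Definition~\ref{Def: extended Rota-Baxter bimodules}. A $\chi(b)$ term paired with $m$ lands in $M\cdot M=0$, and $\psi$ never receives an $M$-argument. When both inputs are $(a,0),(b,0)$, the $A$-component is the extended Rota-Baxter identity for $T$. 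The $M$-component of the left side is
\[
T(a)\chi(b)+\chi(a)T(b)+\psi(T(a),T(b)).
\]
The $M$-component of the right side is
\[
\chi\big(T(a)b+aT(b)+\mu ab\big)+T_M\big(\chi(a)b+\psi(T(a),b)+a\chi(b)+\psi(a,T(b))+\mu\psi(a,b)\big)+\lambda\psi(a,b).
\]
Setting these equal is precisely the identity in the proof of Proposition~\ref{psi and chi}. After substituting $a\star b$ and the actions $\rhd,\lhd$, it matches the second condition of Eq.~(\ref{2-cocycle}) with $f=\psi$ and $g=\chi$, that is, $\partial^1_\RB(\chi)+\delta^2(\psi)=0$.

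The only real obstacle is the sign and term bookkeeping needed to identify the $M$-component equation with Eq.~(\ref{2-cocycle}) exactly. For example, the $\lambda$ and $\mu T_M$ terms must come out on the correct sides. I would do this by rewriting Eq.~(\ref{2-cocycle}) with all terms on one side and matching the terms one by one. Combining the two parts proves both directions, and the statement follows, in agreement with Propositions~\ref{psi and chi} and \ref{product and operator}.
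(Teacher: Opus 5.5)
Your proposal is correct. The associator of $\cdot_\psi$ and the expression $T_\chi(x)\cdot_\psi T_\chi(y)-T_\chi\big(T_\chi(x)\cdot_\psi y+x\cdot_\psi T_\chi(y)+\mu\,x\cdot_\psi y\big)-\lambda\,x\cdot_\psi y$ are multilinear, so reducing to pure inputs is legitimate. The mixed cases are exactly the assumed bimodule axioms. In the pure-$A$ case, your $M$-component equation, once all terms are moved to one side, agrees term by term with the explicit 2-cocycle identity, including the $\lambda\psi(a,b)$ and $\mu T_M\psi(a,b)$ terms. So the bookkeeping you flag as the obstacle works out.

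The paper does no computation here. It deduces the corollary from two results: the proposition that the pair $(\psi,\chi)$ attached to an abelian extension and a section is a 2-cocycle, and the transport of structure from $\hat A$ to $A\oplus M$. Read literally, this gives only the ``only if'' direction. Even that needs one extra observation: if $(A\oplus M,\cdot_\psi,T_\chi)$ is an extended Rota--Baxter algebra, it is itself an abelian extension whose canonical section $a\mapsto(a,0)$ returns exactly $\psi$, $\chi$ and the given bimodule structure. The ``if'' direction, which the paper later uses to build an extension from an arbitrary cocycle, amounts to running that computation backwards. Your direct check supplies it, because every step is an equivalence.

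One small caution. When you identify the $M$-component equation with $\partial^1_{\RB}(\chi)+\delta^2(\psi)=0$, compare against the explicit 2-cocycle identity rather than the paper's general formula for $\delta$. That formula's $B$-subscripts are shifted by one: for $n=2$ it would put $B_3=\mu^2-\lambda$ in front of $T_M\psi$ instead of $B_2=\mu$.
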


	\medskip
	
	Now we investigate the influence of different choices of   sections.

	\begin{prop}\ \label{prop: different sections give}
		\begin{itemize}
			\item[(i)] Different choices of the section $s$ give the same  extended Rota-Baxter bimodule structures on $(M, T_M)$.
			
			\item[(ii)]   The cohomology class of $(\psi,\chi)$ does not depend on the choice of sections.
			
		\end{itemize}
		
	\end{prop}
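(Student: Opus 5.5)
Let $s$ and $s'$ be two sections of $p$. Since $p\circ(s-s')=0$, the difference $\varphi:=s'-s$ takes values in $M$, so it gives a linear map $\varphi:A\to M$. Every step below compares the data built from $s'$ with the data built from $s$. The only facts used are that $M$ has zero multiplication ($uv=0$ for $u,v\in M$) and that $\hat T|_M=T_M$, which comes from the commutative diagram of the abelian extension.

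For (i), I compare the two left actions. For $a\in A$ and $m\in M$,
$$s'(a)m=s(a)m+\varphi(a)m=s(a)m,$$
because $\varphi(a)m\in MM=0$. The right action is handled the same way. The operator $T_M$ is the given map and does not depend on any section. Hence both sections give the same bimodule, and by Proposition~\ref{Prop: new RB bimodules from abelian extensions} the same extended Rota-Baxter bimodule structure.

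For (ii), I compute the new cocycle $(\psi',\chi')$ attached to $s'$. Expanding and dropping the term $\varphi(a)\varphi(b)\in MM=0$ gives
$$\psi'(a\otimes b)=\psi(a\otimes b)+a\varphi(b)+\varphi(a)b-\varphi(ab).$$
For the operator part,
$$\chi'(a)=\hat T(s(a))+\hat T(\varphi(a))-s(T(a))-\varphi(T(a))=\chi(a)+T_M(\varphi(a))-\varphi(T(a)).$$
The claim is then
$$(\psi',\chi')-(\psi,\chi)=\partial^1_{\RBA}(\varphi,0),$$
up to the overall sign convention for $\partial_{\RBA}$. This mirrors the computation in the proof of the equivalence theorem for formal deformations, where
$$(\mu_1',T_1')-(\mu_1,T_1)=\big(\partial^1_{\Alg}(\psi_1),-\delta^1(\psi_1)\big)=\partial^1_{\RBA}(\psi_1,0).$$

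The first component is clear: $a\varphi(b)-\varphi(ab)+\varphi(a)b$ is $\partial^1_{\Alg}(\varphi)$ in the module version of the Hochschild differential. For the second component I must evaluate $\delta^1(\varphi)$ from the formula in the remark on coefficients. With $n=1$, the only terms are
\begin{itemize}
\item $k=0$: coefficient $A_0=0$;
\item $k=1$: coefficient $A_{-1}=-1$, term $\varphi(T(a))$;
\item $k=0$ in the $B$-sum: coefficient $B_2=1$, term $T_M\varphi(a)$.
\end{itemize}
So $\delta^1(\varphi)(a)=\varphi(T(a))-T_M(\varphi(a))$, and therefore
$$\chi'-\chi=-\delta^1(\varphi)=-\partial^0_{\RB}(0)-\delta^1(\varphi),$$
which is exactly the second component of $\partial^1_{\RBA}(\varphi,0)$.

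The main obstacle is sign and coefficient bookkeeping: making sure the $A_{-1}$ and $B_2$ values and the sign conventions of $\partial_{\RBA}$ line up. I will fix these by checking against the $1$-cocycle condition (\ref{1-cocycle}), which shows how $\delta^1$ acts on an element of $\Hom(A,M)$. Once this is settled, $(\psi',\chi')$ and $(\psi,\chi)$ differ by a coboundary, so they define the same class in $\rmH^2_{\RBA}(A,M)$.
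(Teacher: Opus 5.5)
Your proposal is correct and follows the paper's proof essentially line by line. The difference of two sections takes values in $M$, so $MM=0$ makes the induced actions independent of the section, and the same expansion gives $(\psi',\chi')=(\psi,\chi)+\big(\partial^1_{\Alg}\varphi,-\delta^1\varphi\big)=(\psi,\chi)+\partial^1_{\RBA}(\varphi,0)$.

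One small slip in your coefficient check: $B_2=\mu$, not $1$. The displayed $\delta$-formula in the remark appears to be off by one in the $B$-index; the correct coefficient of $T_M\varphi$ is $-B_1=-1$, consistent with $l_2(sf,T)=f\circ T-T\circ f$ for $f$ of arity one and with the $1$-cocycle equation. So your final value $\delta^1(\varphi)(a)=\varphi(T(a))-T_M(\varphi(a))$ is nonetheless right.
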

	\begin{proof}Let $s_1$ and $s_2$ be two distinct sections of $p$.
		We define $\gamma:A\rar M$ by $\gamma(a)=s_1(a)-s_2(a)$.
		
		Since $uv=0$ for all $u,v\in M$,
		$$s_1(a)m= s_2(a)m+\gamma(a)m=s_2(a)m.$$ So different choices of the section $s$ give the same  extended Rota-Baxter bimodule structures on $(M, T_M)$;

		We   show that the cohomology class of $(\psi,\chi)$ does not depend on the choice of sections.   We have
		\begin{align*}
			\psi_1(a,b)&=s_1(a)s_1(b)-s_1(ab)\\
			&=(s_2(a)+\gamma(a))(s_2(b)+\gamma(b))-(s_2(ab)+\gamma(ab))\\
			&=(s_2(a)s_2(b)-s_2(ab))+s_2(a)\gamma(b)+\gamma(a)s_2(b)-\gamma(ab)\\
			&=(s_2(a)s_2(b)-s_2(ab))+a\gamma(b)+\gamma(a)b-\gamma(ab)\\
			&=\psi_2(a,b)+\partial_{\Alg}(\gamma)(a,b)
		\end{align*}
		and
		\begin{align*}
			\chi_1(a)&={\hat{T}}(s_1(a))-s_1(T(a))\\
			&={\hat{T}}(s_2(a)+\gamma(a))-(s_2(T(a))+\gamma(T(a)))\\
			&=({\hat{T}}(s_2(a))-s_2(T(a)))+{\hat{T}}(\gamma(a))-\gamma(T(a))\\
			&=\chi_2(a)+T_M(\gamma(a))-\gamma(T(a))\\
			&=\chi_2(a)-\delta^1(\gamma)(a).
		\end{align*}
		That is, $(\psi_1,\chi_1)=(\psi_2,\chi_2)+\partial_{\RBA}(\gamma)$. Thus $(\psi_1,\chi_1)$ and $(\psi_2,\chi_2)$ form the same cohomology class  {in $\rmH_{\RBA}^2(A,M)$}.
		
	\end{proof}
	
	We show now the isomorphic abelian extensions give rise to the same cohomology class.
	\begin{prop}Let $M$ be a vector space and  $T_M\in\End_\bfk(M)$. Let $(M, T_M)$ be an extended Rota-Baxter algebra with trivial multiplication.
		Let $(A,T)$ be an extended Rota-Baxter algebra.
		Then any two isomorphic abelian extensions of extended Rota-Baxter algebra $(A, T)$ by  $(M, T_M)$  give rise to the same cohomology class  in $\rmH_{\RBA}^2(A,M)$.
	\end{prop}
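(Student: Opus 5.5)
The plan is to reduce to Proposition~\ref{prop: different sections give}(ii) by transporting a section along the isomorphism. Let $(\hat{A}_1,\hat{T}_1)$ and $(\hat{A}_2,\hat{T}_2)$ be isomorphic abelian extensions of $(A,T)$ by $(M,T_M)$, and let $\zeta:\hat{A}_1\to\hat{A}_2$ be an isomorphism of extended Rota-Baxter algebras making diagram \eqref{Eq: isom of abelian extension} commute. Choose a section $s_1:A\to\hat{A}_1$ of $p_1$ and set $s_2:=\zeta\circ s_1$. Since $p_2\circ\zeta=p_1$, we get $p_2\circ s_2=p_1\circ s_1=\Id_A$, so $s_2$ is a section of $p_2$. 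By Proposition~\ref{prop: different sections give}, both the bimodule structure on $M$ and the cohomology class of the $2$-cocycle are independent of the chosen section. It therefore suffices to show that the cocycles $(\psi_1,\chi_1)$ built from $s_1$ and $(\psi_2,\chi_2)$ built from $s_2$ coincide on the nose.

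First I will check that the two induced extended Rota-Baxter bimodule structures on $(M,T_M)$ agree. We have $\zeta\circ i=i$, so $\zeta$ restricts to the identity on $M$. Because $\zeta$ is multiplicative,
\[s_2(a)m=\zeta(s_1(a))\zeta(m)=\zeta(s_1(a)m)=s_1(a)m,\]
where the last equality uses $s_1(a)m\in M$. The right action is handled the same way. The operator on $M$ is $T_M$ in both extensions, by the commutativity of the diagram together with $\zeta|_M=\Id_M$. So both cocycles live in the same complex $\rmC^*_{\RBA}(A,M)$.

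Next I compute the cocycles directly. For the product part,
\[\psi_2(a,b)=\zeta(s_1(a))\zeta(s_1(b))-\zeta(s_1(ab))=\zeta\big(\psi_1(a,b)\big)=\psi_1(a,b),\]
using again that $\psi_1(a,b)\in M$. For the operator part, $\zeta\circ\hat{T}_1=\hat{T}_2\circ\zeta$ gives
\[\chi_2(a)=\hat{T}_2(\zeta s_1(a))-\zeta s_1(T(a))=\zeta\big(\hat{T}_1(s_1(a))-s_1(T(a))\big)=\zeta(\chi_1(a))=\chi_1(a).\]
Hence $(\psi_1,\chi_1)=(\psi_2,\chi_2)$, and together with Proposition~\ref{prop: different sections give}(ii) the two cohomology classes in $\rmH^2_{\RBA}(A,M)$ coincide.

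There is no real obstacle here. The only point needing care is justifying $\zeta|_M=\Id_M$ and that the relevant elements lie in $M$, and both follow directly from the commutative diagram.
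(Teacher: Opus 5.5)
Your proposal is correct and takes the same approach as the paper's proof. Like the paper, you transport the section via $s_2=\zeta\circ s_1$, use $\zeta|_M=\Id_M$ to see that the induced bimodule structures on $M$ agree, and compute $\psi_2=\zeta(\psi_1)=\psi_1$ and $\chi_2=\zeta(\chi_1)=\chi_1$; your explicit appeal to the section-independence of the class is a point the paper leaves implicit.
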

	\begin{proof}
		Assume that $(\hat{A}_1,{\hat{T}_1})$ and $(\hat{A}_2,{\hat{T}_2})$ are two isomorphic abelian extensions of $(A,T)$ by $(M,T_M)$ as is given in \eqref{Eq: isom of abelian extension}. Let $s_1$ be a section of $(\hat{A}_1,{\hat{T}_1})$. As $p_2\circ\zeta=p_1$, we have
		\[p_2\circ(\zeta\circ s_1)=p_1\circ s_1=\Id_{A}.\]
		Therefore, $\zeta\circ s_1$ is a section of $(\hat{A}_2,{\hat{T}_2})$. Denote $s_2:=\zeta\circ s_1$. Since $\zeta$ is a homomorphism of extended Rota-Baxter  algebras such that $\zeta|_M=\Id_M$, $\zeta(am)=\zeta(s_1(a)m)=s_2(a)m=am$, so $\zeta|_M: M\to M$ is compatible with the induced extended Rota-Baxter bimodule structures of $M$.
		We have
		\begin{align*}
			\psi_2(a\ot b)&=s_2(a)s_2(b)-s_2(ab)=\zeta(s_1(a))\zeta(s_1(b))-\zeta(s_1(ab))\\
			&=\zeta(s_1(a)s_1(b)-s_1(ab))=\zeta(\psi_1(a,b))\\
			&=\psi_1(a,b)
		\end{align*}
		and
		\begin{align*}
			\chi_2(a)&={\hat{T}_2}(s_2(a))-s_2(T(a))={\hat{T}_2}(\zeta(s_1(a)))-\zeta(s_1(T(a)))\\
			&=\zeta({\hat{T}_1}(s_1(a))-s_1(T(a)))=\zeta(\chi_1(a))\\
			&=\chi_1(a).
		\end{align*}
		Consequently, two isomorphic abelian extensions give rise to the same element in {$\rmH_{\RBA}^2(A,M)$}.
	\end{proof}
	\bigskip

	Now we consider the reverse direction.

	\begin{prop}
		Two cohomologous $2$-cocycles give rise to isomorphic abelian extensions.
	\end{prop}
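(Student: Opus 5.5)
Given two cohomologous $2$-cocycles $(\psi_1,\chi_1)$ and $(\psi_2,\chi_2)$ in $\rmC^2_{\RBA}(A,M)$, Corollary~\ref{prop:2-cocycle} realizes them as abelian extensions $(A\oplus M,\cdot_{\psi_1},T_{\chi_1})$ and $(A\oplus M,\cdot_{\psi_2},T_{\chi_2})$ as in Proposition~\ref{product and operator}. Cohomologous means there is a $1$-cochain $(\gamma,m_0)\in \Hom(A,M)\oplus M$ with $(\psi_1,\chi_1)-(\psi_2,\chi_2)=\partial_{\RBA}^1(\gamma,m_0)$.

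First I would reduce to $m_0=0$, exactly as in the rigidity theorem of Subsection~\ref{subsec 6.1}. Put $\gamma'=\gamma+\partial^0_{\Alg}(m_0)$. Then $\partial_{\Alg}^1(\gamma')=\partial_{\Alg}^1(\gamma)$ because $\partial_\Alg^2$ is a differential. The identity $\delta^1\partial^0_{\Alg}=\partial^0_{\RB}$, which holds because $\delta$ is a cochain map, gives $-\partial^0_{\RB}(m_0)-\delta^1(\gamma)=-\delta^1(\gamma')$. Hence
\[\psi_1=\psi_2+\partial_{\Alg}^1(\gamma'),\qquad \chi_1=\chi_2-\delta^1(\gamma'),\]
and explicitly
\[\psi_1(a,b)-\psi_2(a,b)=a\gamma'(b)-\gamma'(ab)+\gamma'(a)b,\qquad \chi_1(a)-\chi_2(a)=T_M(\gamma'(a))-\gamma'(T(a)).\]
These match the formulas in the proof of Proposition~\ref{prop: different sections give}. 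The main point to get right is the sign conventions of $\partial_{\Alg}^1$ and $\delta^1$ on $1$-cochains. From the definitions, $\delta^1(\gamma)=\gamma\circ T-T_M\circ\gamma$: only the terms with $A_{-1}=-1$, $A_0=0$ and $B_1=1$ survive. So I would check that the signs agree with those displayed formulas, and otherwise replace $\gamma'$ by $-\gamma'$.

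Next define $\zeta:A\oplus M\to A\oplus M$ by $\zeta(a,m)=(a,m+\gamma'(a))$. This is a linear isomorphism with inverse $(a,m)\mapsto(a,m-\gamma'(a))$. It is the identity on $M$ and commutes with the projection to $A$, so the diagram \eqref{Eq: isom of abelian extension} commutes.

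It remains to check that $\zeta$ is a morphism of extended Rota-Baxter algebras from $(A\oplus M,\cdot_{\psi_1},T_{\chi_1})$ to $(A\oplus M,\cdot_{\psi_2},T_{\chi_2})$.

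For the product, compare the $M$-components of both sides:
\[\zeta\big((a,m)\cdot_{\psi_1}(b,n)\big)\ \text{has}\ an+mb+\psi_1(a,b)+\gamma'(ab),\]
\[\zeta(a,m)\cdot_{\psi_2}\zeta(b,n)\ \text{has}\ an+mb+a\gamma'(b)+\gamma'(a)b+\psi_2(a,b).\]
These agree by the first displayed relation. Here we use that $M\cdot M=0$, so the product $\gamma'(a)\gamma'(b)$ vanishes.

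For the operator, compare the $M$-components again:
\[\zeta T_{\chi_1}(a,m)\ \text{has}\ \chi_1(a)+T_M(m)+\gamma'(T(a)),\qquad T_{\chi_2}\zeta(a,m)\ \text{has}\ \chi_2(a)+T_M(m)+T_M(\gamma'(a)).\]
These agree by the second displayed relation.

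So $\zeta$ is an isomorphism of abelian extensions, and the two cohomologous cocycles give isomorphic abelian extensions. The only real obstacle is the sign bookkeeping in identifying $\partial^1_{\RBA}$ explicitly. Everything else is a direct computation.
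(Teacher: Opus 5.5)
Your proof is correct and follows essentially the paper's own argument: absorb the $0$-cochain part into $\gamma'=\gamma+\partial^0_{\Alg}(m_0)$ using $\delta^1\partial^0_{\Alg}=\partial^0_{\RB}$, then take $\zeta(a,m)=(a,m+\gamma'(a))$. You also carry out the componentwise check that $\zeta$ intertwines $\cdot_{\psi_1},\cdot_{\psi_2}$ and $T_{\chi_1},T_{\chi_2}$, which the paper leaves implicit, and your identification $\delta^1(\gamma)=\gamma\circ T-T_M\circ\gamma$ agrees with the convention the paper uses elsewhere in this section.
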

	\begin{proof}

		Given two 2-cocycles $(\psi_1,\chi_1)$ and $(\psi_2,\chi_2)$, we can construct two abelian extensions $(A\oplus M,\cdot_{\psi_1},T_{\chi_1})$ and  $(A\oplus M,\cdot_{\psi_2},T_{\chi_2})$ via Equations~\eqref{eq:mul} and \eqref{eq:dif}. If they represent the same cohomology  class {in $\rmH_{\RBA}^2(A,M)$}, then there exist two linear maps $\gamma_0:\textbf{k}\rightarrow M, \gamma_1:A\to M$ such that $$(\psi_1,\chi_1)=(\psi_2,\chi_2)+(\partial_{\Alg}^1(\gamma_1),-\delta^1(\gamma_1)-\partial_{\RB}^0(\gamma_0)).$$
		Notice that $\partial_{\RB}^0=\delta^1\bar{\circ}\partial_{\Alg}^0$. Define $\gamma: A\rightarrow M$ to be $\gamma_1+\partial_{\Alg}^0(\gamma_0)$. Then $\gamma$ satisfies
		\[(\psi_1,\chi_1)=(\psi_2,\chi_2)+(\partial_{\Alg}^1(\gamma),-\delta^1(\gamma)).\]
		
		Define $\zeta:A\oplus M\rar A\oplus M$ by
		\[\zeta(a,m):=(a, \gamma(a)+m).\]
		Then $\zeta$ is an isomorphism of these two abelian extensions $(A\oplus M,\cdot_{\psi_1},T_{\chi_1})$ and  $(A\oplus M,\cdot_{\psi_2},T_{\chi_2})$.
	\end{proof}
	
	Finally, by all the propositions above in this subsection, we have:
	\begin{prop}
		 There is a bijection between the isomorphism classes of  abelian extensions of $(A,T)$ by $(M,T_M)$ and the second cohomology group   ${\rmH}_{\RBA}^2(A,M)$.
	\end{prop}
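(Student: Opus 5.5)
The plan is to construct two mutually inverse maps between the set $\mathcal{E}(A,M)$ of isomorphism classes of abelian extensions of $(A,T)$ by $(M,T_M)$ and $\rmH^2_{\RBA}(A,M)$. Throughout, $M$ carries the extended Rota-Baxter bimodule structure of Proposition~\ref{Prop: new RB bimodules from abelian extensions}, which by Proposition~\ref{prop: different sections give}(i) does not depend on the section. Implicitly we fix this bimodule structure and consider only extensions inducing it.

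\emph{Forward map.} Define $\Phi:\mathcal{E}(A,M)\to \rmH^2_{\RBA}(A,M)$ as follows. Given $(\hat A,\hat T)$, choose a section $s$ and send the extension to the class $[(\psi,\chi)]$.
\begin{itemize}
\item By Proposition~\ref{psi and chi}, $(\psi,\chi)$ is a $2$-cocycle.
\item By Proposition~\ref{prop: different sections give}(ii), its class does not depend on $s$.
\item By the proposition on isomorphic abelian extensions, isomorphic extensions give the same class.
\end{itemize}
Hence $\Phi$ is well defined.

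\emph{Backward map.} Define $\Psi:\rmH^2_{\RBA}(A,M)\to\mathcal{E}(A,M)$ by sending $[(\psi,\chi)]$ to the class of $(A\oplus M,\cdot_\psi,T_\chi)$.
\begin{itemize}
\item By Corollary~\ref{prop:2-cocycle} this is an extended Rota-Baxter algebra.
\item With the inclusion $m\mapsto(0,m)$ and the projection $(a,m)\mapsto a$ it is an abelian extension, since $M$ squares to zero and both maps commute with the operators.
\item The induced bimodule structure, computed with the section $a\mapsto(a,0)$, is the given one.
\item By the proposition that cohomologous $2$-cocycles give isomorphic abelian extensions, $\Psi$ is independent of the representative.
\end{itemize}

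\emph{$\Psi\circ\Phi=\Id$.} Starting from $(\hat A,\hat T)$ with section $s$, Proposition~\ref{product and operator} shows that transporting the structure via $\hat A\cong A\oplus M$ yields exactly $(A\oplus M,\cdot_\psi,T_\chi)$. It also shows this extension is isomorphic to the original one.

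\emph{$\Phi\circ\Psi=\Id$.} Take the section $s(a)=(a,0)$ of $(A\oplus M,\cdot_\psi,T_\chi)$ and compute directly:
\[
s(a)\cdot_\psi s(b)-s(ab)=(0,\psi(a,b)), \qquad T_\chi(s(a))-s(T(a))=(0,\chi(a)).
\]
So the recovered cocycle is $(\psi,\chi)$ itself.

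The only step needing care, and the main one I expect to verify, is the implicit check in the backward-map construction. For $\zeta(a,m)=(a,\gamma(a)+m)$ to be an isomorphism, it must intertwine the products, which reduces to $\psi_1=\psi_2+\partial_\Alg^1\gamma$. It must also intertwine the operators, which reduces to $\chi_1=\chi_2-\delta^1\gamma$. This uses the explicit formula $\delta^1(\gamma)=\gamma\circ T-T_M\circ\gamma$ from the proof of Proposition~\ref{prop: different sections give}, together with the reduction $\partial_{\RB}^0=\delta^1\circ\partial_{\Alg}^0$ to absorb the $\Hom(\bfk,M)$ component of the coboundary. Everything else is assembling the earlier propositions.
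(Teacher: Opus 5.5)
Your proposal is correct and follows the paper's route. The paper also obtains the bijection by assembling the subsection's results: the $2$-cocycle $(\psi,\chi)$ attached to a section, its independence of the section and of the isomorphism class, the transported extension $(A\oplus M,\cdot_\psi,T_\chi)$, and the map $\zeta(a,m)=(a,\gamma(a)+m)$ for cohomologous cocycles after absorbing $\gamma_0$ via $\partial^0_{\RB}=\delta^1\circ\partial^0_{\Alg}$. You are more explicit than the paper in checking both composites and in noting that the bimodule structure on $M$ must be held fixed.
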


\medskip

\subsection{Homotopy extended Rota-Baxter algebras}\ \label{subsec 6.3}

In this subsection, we will introduce the notion of homotopy extended Rota-Baxter algebras with weight.

   Let $A=\bigoplus\limits_{i\in \mathbb{Z}}A^i$ be a graded space.  
   \begin{defn}
   	 Consider the graded space $\Hom(\T(A),A)$. The \textbf{graded partial Gerstenhaber composition} of two operators  $f$ and $g$ on position $i$ is defined to be
   	 $$f\bar{\circ}_i g(x_1,\dots,x_{m+n+1}):=(-1)^{q(\sum_{j=1}^{i-1} |x_i|)} f(x_1,\dots,x_{i-1},g(x_{i},\dots,x_{i+n}),x_{i+n+1},\dots,x_{m+n+1})$$ for $f\in \mathrm{Hom}(A^{\otimes m+1},A)$ of degree $p$, $g\in \mathrm{Hom}(A^{\otimes n+1},A)$, homogeneous elements $x_1,\dots,x_{m+n+1}\in A$  and $1\le i \le m+1$.
   	
   	Keeping the same notation, the \textbf{graded Gerstenhaber composition} of $f$ and $g$ is given by $f\bar{\circ} g=\sum_{i=1}^{m+1} f\bar{\circ}_i g$ and the \textbf{graded Gerstenhaber bracket} $[-,-]_{\bf{G}}$ defined on $\mathrm{Hom}(\T(A),A) $ is given by $[f,g]_{\bf{G}}=f\bar{\circ} g-(-1)^{pq}g\bar{\circ} f$.
   \end{defn}
   
   \begin{prop}\
   	
   	\begin{enumerate}
   		\item Let $A$ be a vector space. Then $(s\mathrm{Hom}(\T(A),A),[\cdot,\cdot])$ forms a graded Lie algebra where we define $[sx,sy]:=(-1)^{|x|}s[x,y]_{\bf{G}}$ for any homogeneous elements $x\in \mathrm{Hom}(A^{\otimes p+1},A)$ and $y\in \mathrm{Hom}(A^{\otimes q+1},A)$.
   		\item An element  $s\pi=\{s\pi_i\}_{i\geqslant 1} \in s\mathrm{Hom}(\T(A),A)$ with $\pi_i\in \mathrm{Hom}(A^{\otimes i},A)$  is a Maurer-Cartan element   if and only if $(A,\pi)$ is an  $A_\infty [1]$-algebra, see Remark \ref{A infty}.
   	\end{enumerate}
   \end{prop}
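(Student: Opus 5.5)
The plan is to reduce both parts to the ungraded case already treated, keeping track of the extra Koszul signs introduced by the graded partial compositions. For (1), first check that $(\mathrm{Hom}(\T(A),A),[-,-]_{\bf G})$ with the graded Gerstenhaber composition is an ordinary graded Lie algebra in the sense of the Remark following Definition~\ref{curved dg Lie}. The grading is now the total degree: a map $f\in\Hom(A^{\otimes m+1},A)$ of internal degree $p$ gets total degree $m+p$. The cleanest route is to identify $\Hom(\T(A),A)$ with the space of coderivations of the tensor coalgebra $\T^c(sA)$ (or of $\T^c(A)$ with the suspended grading). Under this identification the graded composition $f\bar{\circ}g$ corresponds, up to the sign conventions fixed by the graded partial composition, to composing the associated coderivations. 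The graded bracket then corresponds to the graded commutator of coderivations, which is automatically graded antisymmetric and satisfies the graded Jacobi identity.

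Alternatively, I would verify directly that $\bar{\circ}$ is a graded pre-Lie product. Concretely, I would show that the associator $(f\bar{\circ}g)\bar{\circ}h-f\bar{\circ}(g\bar{\circ}h)$ is graded symmetric in $g,h$. Expanding by insertion positions, the terms where $h$ is inserted inside $g$ cancel against $f\bar{\circ}(g\bar{\circ}h)$. The remaining terms, where $g$ and $h$ occupy disjoint slots of $f$, are symmetric in $g,h$ after commuting the two insertions. That commutation produces exactly the Koszul sign $(-1)^{|g||h|}$ coming from the factor $(-1)^{q\sum|x_j|}$ in the definition. This is the step I expect to be the main obstacle: one must confirm that the chosen sign in $f\bar{\circ}_i g$ (which involves the degree of $g$ and the degrees of the preceding inputs) combines with the arity shift so that the disjoint-insertion terms carry the correct symmetric sign. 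I would track this by treating each input $x_j$ together with the arity contribution as a single suspended element.

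Once $\bar{\circ}$ is graded pre-Lie, its commutator $[-,-]_{\bf G}$ is a graded Lie bracket by the standard argument. The passage to $s\Hom(\T(A),A)$ with $[sx,sy]=(-1)^{|x|}s[x,y]_{\bf G}$ is then exactly the décalage described in the Remark after Definition~\ref{curved dg Lie}. So (1) follows as in the ungraded proposition of Subsection~\ref{subsec 3.2}.

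For (2), write $s\pi=\sum_i s\pi_i$ with each $s\pi_i$ of degree $0$ after the shift, that is, $\pi_i$ of the appropriate degree so that $\pi_i$ has total degree $1$. The Maurer–Cartan equation $\tfrac12[s\pi,s\pi]=0$ reduces, via the definition of the shifted bracket, to $\pi\bar{\circ}\pi=0$, since $[\pi,\pi]_{\bf G}=2\pi\bar{\circ}\pi$ for an element of odd total degree. Decomposing $\pi\bar{\circ}\pi=0$ by arity $n$ gives
\[\sum_{i+j=n+1}\sum_{k}\pi_i\bar{\circ}_k\pi_j=0\]
for each $n$, with the Koszul signs built into $\bar{\circ}_k$. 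These are precisely the defining relations of an $A_\infty[1]$-algebra as recalled in Remark~\ref{A infty}. Hence $s\pi$ is a Maurer–Cartan element if and only if $(A,\pi)$ is an $A_\infty[1]$-algebra.
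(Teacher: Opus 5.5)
Your overall route is the paper's: show that $\bar\circ$ is graded pre-Lie (equivalently, identify $\Hom(\T(A),A)$ with coderivations), pass to $s\Hom(\T(A),A)$ by the d\'ecalage of the Remark after the definition of curved dg Lie algebras, and unpack the Maurer--Cartan equation. The paper simply cites the graded Lie structure as well known. However, the grading you choose is wrong for the composition in the statement, and this is exactly where your argument breaks.

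The graded partial composition carries only the Koszul sign $(-1)^{q\sum_{j<i}|x_j|}$, with $q$ the \emph{internal} degree of $g$. It carries no arity sign $(-1)^{(i-1)n}$, and the bracket sign $(-1)^{pq}$ also uses internal degrees. So $[-,-]_{\G}$ is the commutator of coderivations of $\T^c(A)$ with $A$'s own grading. It is not the commutator for $\T^c(sA)$, which would produce arity signs absent from the formula; your ``up to the sign conventions'' hides precisely this discrepancy. The shift is already built into $A$, which is what the $[1]$ in $A_\infty[1]$ records. Hence the Lie grading is the internal degree $p$, not $m+p$, and there is no arity contribution to absorb.

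With internal degrees your associator computation works verbatim. Nested insertions cancel against $f\bar\circ(g\bar\circ h)$, and the two orders of disjoint insertion differ by exactly $(-1)^{qr}$, where $q,r$ are the internal degrees of $g,h$. With total degrees the required sign would also involve the arities of $g$ and $h$, nothing in the formula produces it, and the identity fails. Concretely, take $A=\bfk$ in degree $0$, so $\bar\circ$ carries no signs, and $f(a,b)=ab$, of total degree $1$. The total-degree bracket gives $\{f,f\}=2f\bar\circ f$ and $\{f,\{f,f\}\}(1,1,1,1)=-4\neq 0$, contradicting graded Jacobi for an odd element.

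The same slip invalidates (2). Since $s\pi$ must have degree $0$, every $\pi_i$ must have internal degree $1$; this is the $A_\infty[1]$ convention. Then $[\pi,\pi]_{\G}=2\pi\bar\circ\pi$, and its arity-$n$ component is $\sum_{i+j+k=n,\ j\ge 1}\pi_{i+1+k}\circ(\Id^{\otimes i}\otimes\pi_j\otimes\Id^{\otimes k})=0$ with Koszul signs. These are exactly the $A_\infty[1]$ relations written in the Proposition-Definition of homotopy extended Rota--Baxter algebras.

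Your condition ``total degree $1$'', i.e.\ $|\pi_i|=2-i$, is the unshifted $A_\infty$ convention. Such a $\pi$ is not homogeneous for the bracket's grading, and one gets $[\pi,\pi]_{\G}=\sum_{i,j}(1-(-1)^{ij})\pi_i\bar\circ\pi_j$. This loses every term with $ij$ even, for instance $\pi_1\bar\circ\pi_2$, $\pi_2\bar\circ\pi_1$ and $\pi_2\bar\circ\pi_2$. In any case, Koszul-signed relations without arity signs are not the $A_\infty$ relations for those degrees.

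Replacing total degree by internal degree throughout, and $\T^c(sA)$ by $\T^c(A)$, repairs both parts.
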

   
   \begin{rmk}
   	It is well known that $(\mathrm{Hom}(\T(A),A),[\cdot,\cdot]_{\bf{G}})$ is an ordinary graded Lie algebra. It follows that the above proposition holds. 
   \end{rmk}

   Recall $\overline{\T}(A)=\bigoplus\limits_{n=1}^\infty(A)^{\ot n}$.
    Denote $\mathfrak{C}_{\Alg}(A)=\Hom(\overline{\T}(A),A)$ and
   $$ \mathfrak{C}_{\RB}(A)=\Hom(\overline{\T}(A),A).$$
    Set $$\mathfrak{C}_{\RBA}(A) =s\mathfrak{C}_{\Alg}(A)\oplus\mathfrak{C}_{\RB}(A).$$  Then    $\mathfrak{C}_{\RBA}(A)$ is  an $L_\infty[1]$-algebra. The $L_\infty$-structure is given by:
   
   \begin{thm}\label{thm: Linfinity for homotopy}
   	Keep the above notation and denote $t=\sqrt{\mu^2-4\lambda}$. There exists an $L_\infty[1]$-algebra structure on   ${\mathfrak{C}_{\RBA}}(A)$, where $l_i$ are given by 
   	$$l_1(sf)=-A_n f$$
   	$$
   	l_2(sf,sg)      =    (-1)^{|f|} s[f,g]_{\G}, $$
   	and for $i\geq 2$,
   	\begin{equation*}
   		\begin{aligned}
   			l_i(sf,\xi_1,\cdots,\xi_{i-1})  =&\sum\limits_{\substack{\sigma\in \rmS_{i-1}\\a_j+m_{\sigma(j)}+1\le a_{j+1}}}\epsilon(\sigma)\\ &
   			-A_{n-i+1} \big(\cdots ((f\bar{\circ}_{a_1}\xi_{\sigma(1)})\bar{\circ}_{a_2}\xi_{\sigma(2)})\bar{\circ}_{a_3}\cdots\big)\bar{\circ}_{a_{i-1}}\xi_{\sigma(i-1)}\\
   			&-\sum\limits_{\substack{\sigma\in \rmS_{i-1}\\a_j+m_{\sigma(j)}+1\le a_{j+1}, j\ge 2}} \epsilon(\sigma) (-1)^{|f||\xi_{\sigma(1)}|} \\ &
   			B_{n-i+3} \xi_{\sigma(1)}\bar{\circ}\big(\cdots ((f\bar{\circ}_{a_2}\xi_{\sigma(2)})\bar{\circ}_{a_3}\cdots)\bar{\circ}_{a_{i-1}}\xi_{\sigma(i-1)}\big),
   		\end{aligned}
   	\end{equation*}
   	
   	for homogeneous elements	$f\in\Hom(A^{\otimes n+1},A)\subseteq\mathfrak{C}_{\Alg}(A)$, $g\in\Hom(A^{\otimes m+1},A)\subseteq \mathfrak{C}_{\Alg}(A)$    	and $\xi_j\in\Hom(A^{\otimes m_j+1},A)\subseteq \mathfrak{C}_{\RB}(A)$, $1\leq j\leq i-1$, and all other components vanish.
   \end{thm}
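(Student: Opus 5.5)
The plan is to deduce Theorem~\ref{thm: Linfinity for homotopy} from the ungraded Theorem~\ref{thm: Linfinity for ass case}. The point is that the proof in the ungraded case only uses formal properties of the partial compositions, together with the coefficient identities of Lemma~\ref{key lemma}. Concretely, the generalised Jacobi identity of Definition~\ref{Def:L[1]-infty} only needs the following:
\begin{itemize}
\item the associativity relations of the partial Gerstenhaber compositions $\bar{\circ}_i$ (sequential and parallel composition), which encode the pre-Lie/brace structure on $\Hom(\T(A),A)$;
\item the recursions $A_mA_n+A_{m+n+1}=A_{m+1}B_{n+1}$ and $A_mB_n+B_{m+n+1}=B_{m+1}B_{n+1}$ from Remark~\ref{coefficient}.
\end{itemize}
Once the graded partial compositions (with the Koszul sign $(-1)^{q\sum|x_j|}$) are shown to satisfy the same operadic associativity relations, with the standard Koszul signs inserted, the ungraded argument carries over verbatim. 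The extra signs $(-1)^{|f||\xi_{\sigma(1)}|}$ are exactly those produced by moving $\xi_{\sigma(1)}$ past $f$. Equivalently, $\Hom(\overline{\T}(A),A)$ is the convolution of the endomorphism operad with the associative cooperad. The construction in Theorem~\ref{thm: Linfinity for ass case} is operadic, so the same formulas remain valid for any graded $A$, and the numerical degree $n$ in the ungraded signs is replaced by the total degree $|f|$.

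The verification will proceed in the following order.

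\emph{Graded symmetry.} This is immediate, since each formula is symmetrised over $\rmS_{i-1}$ with Koszul signs $\epsilon(\sigma)$, and $l_2$ on $s\mathfrak{C}_{\Alg}$ is the shifted graded Gerstenhaber bracket.

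\emph{Jacobi identity, sorted by input type.} Because every $l_i$ has at most one input from $s\mathfrak{C}_{\Alg}$ and its output lands in $\mathfrak{C}_{\RB}$ unless all inputs come from $s\mathfrak{C}_{\Alg}$, nonzero contributions occur only in three cases:
\begin{itemize}
\item inputs $(sf,sg)$ only: this is the graded Jacobi identity for $[-,-]_{\G}$, together with $l_1\circ l_2$ compatibility, which reduces to $l_1$ being compatible with the bracket up to the terms below;
\item inputs $(sf,sg,\xi_1,\dots,\xi_{k})$;
\item inputs $(sf,\xi_1,\dots,\xi_k)$, where only $l_1(sf)$ and $l_j(sf,\dots)$ enter, and terms with two $\mathfrak{C}_{\RB}$-outputs composed vanish.
\end{itemize}

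\emph{Expanding the composites.} In each case, expand $l_{k}(l_{j}(\dots),\dots)$ into sums of nested compositions of shape $\xi\bar{\circ}(\cdots(f\bar{\circ}_{a}\xi)\cdots)$ or $(\cdots(f\bar{\circ}\xi)\cdots)$. Terms where two $\xi$'s are composed in a non-nested way cancel pairwise by the graded parallel-composition relation, with opposite Koszul signs. The remaining terms group by the total number of $\xi$'s and the arity of $f$. The coefficients that survive are $A_{n-i+1}$, $A_{n-i+1}B_{\ast}$, $B_{\ast}B_{\ast}$, and so on, and their vanishing is precisely the content of Lemma~\ref{key lemma}.

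\emph{Main obstacle: signs.} The main difficulty will be the sign bookkeeping in the mixed case $(sf,sg,\xi_1,\dots,\xi_k)$, where $l_2(sf,sg)$ feeds into $l_{k+1}$ and must be matched with $l_{j}(sf,\dots)$ fed into $l_{k-j+2}(sg,\dots)$. I would handle it by first reducing to the case where all $\xi_j$ have even degree and $A$ is concentrated in degree zero, which is the ungraded Theorem~\ref{thm: Linfinity for ass case}. I would then argue that the graded case follows from a universal sign rule: in every term, the sign equals the Koszul sign of the permutation bringing the symbols $f,g,\xi_{\sigma(1)},\dots$ into the order in which they appear in the nested composition. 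This property holds for every defining formula, so it holds for every composite, and each cancellation identity verified in the ungraded case lifts with matching signs.
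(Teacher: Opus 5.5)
Your outline follows the paper's route. The paper proves this theorem only by saying it is ``similar'' to the appendix computation for the ungraded structure on $s\frakm\oplus\fraka$. What you add is a reduction meant to dispose of the signs, and that reduction fails as stated.

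\emph{The base case is misidentified.} The graded theorem with $A$ concentrated in degree $0$ is not the ungraded theorem. In the graded setting a cochain's degree is its internal degree. So for $A$ in degree $0$ every cochain has degree $0$, and the signs $(-1)^{q\sum|x_j|}$, $(-1)^{pq}$ and $(-1)^{|f||\xi|}$ all disappear. The bracket becomes $[f,g]_{\G}=f\bar\circ g-g\bar\circ f$ with unsigned $\bar\circ$; for example, $[\pi,\pi]_{\G}=0$ for every binary $\pi$. In the ungraded theorem, by contrast, a cochain of arity $k+1$ has degree $k$ and $\bar\circ$ carries the sign $(-1)^{(i-1)k}$, so $[\pi,\pi]_{\G}=2(\pi\bar\circ_1\pi-\pi\bar\circ_2\pi)$. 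Restricting the ungraded theorem to even-degree $\xi_j$ does not help either: that covers only $\xi_j$ of odd arity, $f$ and $g$ still have degree equal to arity minus one, and no sign rule can change arities. So nothing proved in the paper supplies your base case.

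\emph{The sign rule is not literally true.} The prefactor $(-1)^{|f|}$ in $l_2$, and the Jacobi signs computed with $|sf|=|f|-1$, are not Koszul signs for permuting $f,g,\xi_j$ with their Hom-degrees. You must count $s$ as an odd letter, and you must also include the inputs $x_k$, whose degrees vary as well.

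There are two ways to repair this.

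(a) Keep your base case but prove it. When all of $f,g,\xi_j,x_k$ are even, only the fixed signs coming from $s$ survive. You must then rerun the appendix cancellation in this setting: each group of copies of one planar composite is killed by $A_mA_n+A_{m+n+1}=A_{m+1}B_{n+1}$ or $A_mB_n+B_{m+n+1}=B_{m+1}B_{n+1}$. Since every $l_i$ is a natural Koszul-signed operation once $s$ is counted as odd, the even-rules (Grassmann envelope) principle then gives arbitrary parities. This handles all signs at once, but the combinatorics must still be redone.

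(b) Start from the ungraded theorem on an ungraded $A_0$ via the suspension $f\mapsto\tilde f=s\circ f\circ(s^{-1})^{\otimes(n+1)}$. This realises it as the graded configuration with all inputs odd and each cochain of parity equal to arity minus one. The translation is not verbatim. For arities $n+1$ and $m+1$ one finds $\tilde f\bar\circ_i\tilde g=(-1)^{(i-1)m+nm}\widetilde{f\bar\circ_i g}$, so you must rescale (by $(-1)^{\binom{n}{2}}$ to match the brackets) and check the higher $l_i$ separately. You then transfer to other parities using the fact that the appendix cancellations always occur among copies of one planar composite. Changing parities therefore multiplies each cancelling group by a common Koszul factor.

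Smaller points:
\begin{itemize}
\item $l_2(sf,sg)$ has two $s\mathfrak{C}_{\Alg}(A)$-inputs, contrary to your claim that every $l_i$ has at most one.
\item The two-input case $(sf,sg)$ is not the Jacobi identity of $[-,-]_{\G}$; that is the case $(sf,sg,sh)$. It is the mixed case with no $\xi$'s: $l_2(l_1(sf),sg)=\pm A_n\,l_2(sg,f)$ with $f$ viewed in $\mathfrak{C}_{\RB}(A)$, and one needs $A_{n-1}A_m+A_{n+m}=A_nB_{m+1}$.
\item The case $(sf,\xi_1,\dots,\xi_k)$ vanishes term by term.
\end{itemize}
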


   \begin{rmk}
   	The proof is  similar to Theorem \ref{thm: Linfinity for ass case}.
   \end{rmk}

   \begin{prop-def}\label{Def: homotopy RB algebras}
   	Let $A=\bigoplus\limits_{i\in \mathbb{Z}}A^i$ be a graded space. Then a homotopy extended Rota-Baxter algebra $(A,\pi=\{\pi_i\}_{i\geqslant 1}, T=\{T_i\}_{i\geqslant 1})$  of weight $(\mu,\lambda)$, where $\pi_i,T_i\in \Hom(A^{\otimes i},A)$ for each $i$, is defined to be such that $(s\pi,T)$ is a Maurer-Cartan element in the $L_\infty[1]$-algebra $\mathfrak{C}_{\RBA}(A)$. The explicit formulas are given as follows.
   \end{prop-def}

   \begin{proof}
   	$(s\pi,T)=(\{s\pi_i\}_{i\geqslant 1}, \{T_i\}_{i\geqslant 1}) \in\mathcal{MC}(\mathfrak{C}_{\RBA}(A))$ if and only if
   	$[\pi,\pi]_{\G}=0$, i.e. 
   	\begin{eqnarray}
   		\label{Eq: A infinity}   \sum_{i+j+k= n,\atop i,   k\geq 0, j\geq 1  }\pi_{i+1+k}\circ(\Id^{\ot i}\ot \pi_j\ot \Id^{\ot k})=0,
   	\end{eqnarray}
   	
   	and
   	\begin{equation*}
   		\begin{aligned}
   			0 &= 	l_1(s\pi)+\sum_{k=2}^\infty\frac{1}{(k-1)!}l_k(s\pi,\underbrace{T,\dots,T}_{(k-1)\ \mathrm{times}}) 
   		\end{aligned}
   	\end{equation*}
   	For arbitrary  $(x_1,x_2,\dots,x_n)\in A^{\otimes n}$, we have
   	
   		\begin{align}\label{Eq: T infinity}
   			0=&	-A_{n-1}\pi_n\\
   			&+\sum\limits_{\substack{\\a_j+m_{\sigma(j)}+1\le a_{j+1}\\m\ge k\ge 1, j_1+\dots+j_k+m-k=n}}
   			-A_{m-k-1} \big(\cdots ((\pi_m\bar{\circ}_{a_1}T_{j_1})\bar{\circ}_{a_2}T_{j_2})\bar{\circ}_{a_3}\cdots\big)\bar{\circ}_{a_{k}}T_{j_k}\notag\\
   			&+\sum\limits_{\substack{\\a_j+m_{\sigma(j)}+1\le a_{j+1}, j\ge 2\\m-1\ge k\ge 0, j_1+\dots+j_k+m-k=n}}  
   			-B_{m-k+1} T_{j_1}\big(\cdots ((\pi_m\bar{\circ}_{a_2}T_{j_2})\bar{\circ}_{a_3}\cdots)\bar{\circ}_{a_{k}}T_{j_k}\big).\notag
   		\end{align}
   
   \end{proof}

   \begin{rmk}\label{A infty}
   	In the same ways that $L_\infty[1]$-algebras correspond to $L_\infty$-algebras, we say that $(A, \pi)$ is an $A_\infty[1]$-algebra.  Moreover, $T=\{T_i\}_{i\geqslant 1}$ is a homotopy extended Rota-Baxter operator on $(A,\pi)$.
   \end{rmk}

   For $n=1,2$, Equation~(\ref{Eq: T infinity})   gives
   \begin{eqnarray}
   	\label{operator-differential}   \pi_1\circ T_1=T_1\circ \pi_1,\end{eqnarray}
   and \begin{eqnarray}
   	\label{rbo-homotopy}  &  \pi_2\circ (T_1\ot T_1)-T_1\circ \pi_2\circ (\Id\ot T_1)-T_1\circ \pi_2\circ (T_1\ot \Id)-\mu T_1\circ \pi_2-\lambda \pi_2 \\
   	\notag   &= -\pi_1\circ T_2+T_2\circ (\Id\ot \pi_1)+T_2\circ(\pi_1\ot \Id).
   \end{eqnarray}
   Equation~(\ref{operator-differential}) implies that $T_1: (A, \pi_1)\to (A, \pi_1)$ is a cochain map,  thus $T_1$ is well-defined on  $\rmH^\bullet(A, \pi_1)$;  Equation~(\ref{rbo-homotopy}) indicates that $T_1$ is an extended Rota-Baxter operator of weight $(\mu,\lambda)$ with respect to $\pi_2$ up to homotopy, whose  obstruction is just the operator $T_2$. As a consequence, $(\rmH^\bullet(A, \pi_1), \pi_2, T_1)$ is  an extended Rota-Baxter algebra.

\bigskip

\section{Relations}\label{sec 7}

Furthermore, we describe the intrinsic relationship among extended Rota-Baxter algebras, extended Rota-Baxter Lie algebras, and the eigenvalues of the extended Rota-Baxter operator.

\subsection{The relation between $L_\infty$-structure and the eigenvalue of extended Rota-Baxter operators}\label{subsec 7.3}

	\begin{defn}
	Let $T$ be an operator on  $\bfk$-algebra $A$. If there exists $r\in \bfk$ such that  $r$ is a solution of relation equation of $T$, then  $r$ is called an \textbf{eigenvalue}  of $T$ and $-r$ is an \textbf{opposite eigenvalue} of $T$.
    \end{defn}

   \begin{exam}
   	Let $(A,\pi,T)$ be an extended Rota-Baxter algebra of weight $(\mu,\lambda)$. The relation equation of $T$ is given by $\pi\circ(T\o T)-T\pi(\Id\o T)-T\pi(T\o \Id)-\mu T\circ \pi-\lambda\pi=0$. Then the eigenvalue $r$ is the root of quadratic equation \( -r^2 - \mu r - \lambda= 0 \) and then the opposite eigenvalue $r$ of an extended Rota-Baxter algebra of weight $(\mu,\lambda)$ is the root of quadratic equation \( r^2 - \mu r + \lambda = 0 \). Thus we have two  opposite eigenvalues \( r_1 = \frac{\mu + t}{2} \) and \( r_2 = \frac{\mu - t}{2} \) where we set $t=\sqrt{\mu^2-4\lambda}$.
   \end{exam}

   \begin{prop}
   	The general terms of recursive sequence whose characteristic
   	equation is associated with extended Rota-Baxter operators are related to the $L_\infty$-structure of
   	extended Rota-Baxter algebras.
   \end{prop}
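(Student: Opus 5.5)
Our plan is to make the vague statement precise. We will show that the coefficients $A_n$ and $B_n$ of Remark~\ref{coefficient}, which appear in every operation $l_i$ of Theorem~\ref{thm: Linfinity for ass case}, are exactly the general terms of the linear recurrence
\[
x_{n+1}=\mu x_n-\lambda x_{n-1},
\]
whose characteristic equation $r^2-\mu r+\lambda=0$ is the opposite-eigenvalue equation of the preceding example.

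First, we set $r_1=\frac{\mu+t}{2}$ and $r_2=\frac{\mu-t}{2}$, so that $r_1+r_2=\mu$, $r_1r_2=\lambda$ and $r_1-r_2=t$. Then
\[
B_n=\frac{r_1^n-r_2^n}{r_1-r_2}\quad\text{and}\quad A_n=\lambda B_n .
\]
This is the Binet form of the solution of the recurrence. Next, we verify the recurrence directly. Since each $r_j$ satisfies $r_j^{n+1}=\mu r_j^n-\lambda r_j^{n-1}$, subtracting the two identities and dividing by $t$ gives $B_{n+1}=\mu B_n-\lambda B_{n-1}$. The same recurrence holds for $A_n$ after multiplying by $\lambda$.

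The initial values are $B_0=0$, $B_1=1$, $A_0=0$ and $A_1=\lambda$, matching Remark~\ref{coefficient}. The convention $A_{-1}=-1$ is also consistent with the recurrence, since $A_1=\mu A_0-\lambda A_{-1}=\lambda$. To avoid any dependence on the existence of $t$ or on $t\neq 0$, we will define $A_n$ and $B_n$ instead as the polynomials in $\mu,\lambda$ produced by the recurrence from these initial values. The Binet expressions then serve only as formal bookkeeping, in line with the caveat in Remark~\ref{coefficient}; when $t=0$ we are in the repeated-root case, where $B_n=n(\mu/2)^{n-1}$.

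Finally, we tie this to the $L_\infty[1]$-structure. The arity-$(n+1)$ component of $l_1(sf)$ is $-A_nf$, and the higher operations $l_i$ carry the coefficients $A_{n-i+1}$ and $B_{n-i+3}$. Hence every structure map is governed by the general term of the recurrence, evaluated at an index equal to the arity shift. We will also note that the identity of Lemma~\ref{key lemma}, namely $A_mA_n+A_{m+n+1}=A_{m+1}B_{n+1}$, is the addition formula for such sequences. It follows from the recurrence by induction on $m$, with the base cases $m=-1$ and $m=0$. Since this identity is precisely what makes the generalised Jacobi identity hold, the recursive sequence is what controls the whole structure.

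As a sanity check we will recover the special cases. For $\lambda=0$ we have $B_n=\mu^{n-1}$, which matches Corollary~\ref{wRBLie}. For $\mu=0$ the sequence alternates between zero and powers of $-\lambda$, which matches Corollary~\ref{mRNLie}.

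The main obstacle is not computational but the imprecision of the statement itself: "related to" has to be given content. We will fix it as the claim that the structure constants are exactly the terms of this recurrence and that their addition law is the input to the Jacobi identities. Of the computations, the only nontrivial one is the induction proving the addition formula; the rest is routine.
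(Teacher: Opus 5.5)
Your proposal is correct and follows the same route as the paper. The paper's proof also simply identifies $A_n$ and $B_n$ as the general terms of the recurrence $x_n=\mu x_{n-1}-\lambda x_{n-2}$ (characteristic equation $r^2-\mu r+\lambda=0$), with initial values $(x_0,x_1)=(0,\lambda)$ and $(0,1)$ respectively. Your Binet verification, your recurrence-based definition (which avoids assuming that $t$ exists or is nonzero), and your inductive proof of $A_mA_n+A_{m+n+1}=A_{m+1}B_{n+1}$ are correct supplements that the paper leaves implicit; one small precision is that for $\lambda=0$ the formula $B_n=\mu^{n-1}$ holds only for $n\ge 1$, since $B_0=0$.
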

   
   \begin{proof}
   	Consider the recursive sequence $A_n=\mu A_{n-1}-\lambda A_{n-2}$ with characteristic equation \( r^2 - \mu r + \lambda = 0 \).
   	
   	When \[ A_0 = 0 \]
   	\[ A_1 = \lambda \]
   	
   	we have $A_n=\frac{\lambda}{t}((\frac{\mu +t}{2})^{n}-(\frac{\mu -t}{2})^{n})$
   	
   	When \[ A_0 = 0 \]
   	\[ A_1 = 1 \]
   	
   	we have $B_n=\frac{1}{t}((\frac{\mu +t}{2})^{n}-(\frac{\mu -t}{2})^{n})$.
   	
   	Hence these are precisely  the coefficients of the $L_\infty$-structure for extended Rota-Baxter algebras.
   \end{proof}

\medskip
 
\subsection{The relation between extended Rota-Baxter algebras and extended Rota-Baxter Lie algebras} \label{subsec 7.2}

\begin{defn}
	An extended Rota-Baxter Lie algebra of weight $(\mu,\lambda)$ for $\mu,\lambda \in \bfk$ is a pair $(A,T)$ where $A$ is a Lie algebra with a linear map $T:A\to A$ such that $[T(a),T(b)]=T([T(a),b]+[a,T(b)]+\mu [a,b]) +\lambda [a,b]$ for all $a,b \in A$.
	
	A morphism of extended Rota-Baxter Lie algebras of weight $(\mu,\lambda)$ between $(A,T)$ and $(B,T')$ is a morphism of Lie algebras $f: A \to B$ such that $f\circ T= T'\circ f$.
\end{defn}
   
   \begin{defn}(\cite{NR67})
   	Let $A$ be a vector space. Consider the graded vector space $ \mathrm{Hom}(\bigwedge (A),A):=\bigoplus_{n=-1}^{\infty}\mathrm{Hom}(\wedge^{n+1}A,A)$ where an element $f\in\mathrm{Hom}(\wedge^{n+1}A,A)$ is said to have degree $n$. 
   	Define the  \textbf{Nijenhuis-Richardson bracket} $[\cdot,\cdot]_{\bf{NR}}$ on $\mathrm{Hom}(\bigwedge (A),A)$  as follows:
   	for any $P\in \mathrm{Hom}(\wedge^{p+1}A,A)$ and $Q\in \mathrm{Hom}(\wedge^{q+1}A,A)$,
   	$$[P,Q]_{\bf{NR}}=P\bar{\circ}' Q-(-1)^{pq}Q\bar{\circ}' P,$$
   	where $P\bar{\circ}' Q$ is defined by
   	\begin{align*}
   		(P \bar{\circ}' Q)(x_{1}, \ldots, x_{p+q+1})=\sum_{\sigma \in \Sh(q+1, p)}sgn(\sigma) P(Q(x_{\sigma(1)}, \ldots, x_{\sigma(q+1)}), x_{\sigma(q+2)}, \ldots,  x_{\sigma(p+q+1)}).
   	\end{align*}
   \end{defn}

    Now, we will show  the relation between extended Rota-Baxter algebras and extended Rota-Baxter  Lie algebras.
    
        First, we introduce the controlling algebra of extended Rota-Baxter  Lie algebras.
    	Let $A$ be a vector space.  Let $$\frakm':=\mathrm{Hom}(\wedge(A),A)\ \mathrm{and}\
    \fraka':=\Hom(\wedge(A),A).$$
     
     For homogeneous elements	$f\in\Hom(\wedge^{n}A,A)$, $\sigma \in \rmS_n$ and $x_1,\dots,x_n\in A$, we define $f\sigma^{-1}(x_1,\dots,x_n):=f(x_{\sigma(1)},\dots,x_{\sigma(n)})$.
     
    	\begin{thm}[Controlling algebra of extended Rota-Baxter  Lie algebras\cite{yj}]\label{thm: Linfinity for extend Lie}
    	Keep the above notation and denote $t=\sqrt{\mu^2-4\lambda}$, there exists an $L_\infty[1]$-algebra structure on  $s \mathfrak{m}'\oplus\fraka'$, where $l'_i$ are given by 
    	$$l'_1(sf)=-A_n f$$
    	$$
    	l'_2(sf,sg)      =    (-1)^{|f|} s[f,g]_{\NR}, $$
    	and for $i\geq 2$,
    	\begin{equation*}
    		\begin{aligned}
    			 l'_i(sf,\xi_1,\cdots,\xi_{i-1}) =&\sum\limits_{\tau\in\Sh(m_1+1,\dots,m_{i-1}+1,n+2-i)}sgn(\tau)\\ &
    			-A_{n-i+1}\big(f(\xi_1\otimes\dots\otimes\xi_{i-1}\otimes\Id^{\otimes n+2-i})\big)\tau^{-1}\\
    			&+\sum_{k=1}^{i-1}\sum\limits_{\tau\in\Sh(m_1+1,\dots,m_{i-1}+1,n+3-i,m_k)}sgn(\tau)(-1)^{(n+\sum\limits_{j=1 }^{k-1}m_j)m_k} \\ 
    			&-B_{n-i+3} \xi_k  \big(f(\xi_1\otimes\dots\otimes\xi_{i-1}\otimes\Id^{\otimes n+3-i})\o \Id^{\otimes m_k}\big)\tau^{-1}	
    		\end{aligned}
    	\end{equation*}
    	
    	For homogeneous elements	$f\in\Hom(\wedge^{n+1}A,A)\subseteq \frakm'$, $g\in\Hom(\wedge^{m+1}A,A)\subseteq \frakm'$ and $\xi_j\in\Hom(\wedge^{m_j+1}A,A)\subseteq \fraka'$, $1\leq j\leq i-1$, and all other components vanish.
    \end{thm}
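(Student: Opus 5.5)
The plan is to deduce the Lie statement from the associative one, Theorem~\ref{thm: Linfinity for ass case}, by antisymmetrization, rather than redoing the full generalised Jacobi identity. The model is the classical fact that the total antisymmetrization map $\mathrm{Hom}(\wedge A,A)\to\mathrm{Hom}(\T(A),A)$ is not a Lie map. Instead, the natural route runs through the restriction/projection adapted to Lie operations. Concretely, for each $n$ we consider the symmetrization $\iota:\Hom(\wedge^{n+1}A,A)\to\Hom(A^{\otimes n+1},A)$, $f\mapsto \sum_{\sigma}\mathrm{sgn}(\sigma)f\sigma^{-1}$ (appropriately normalized). We compare $l'_i$ with $l_i$ on the image, using the standard identity that the Nijenhuis--Richardson composition $\bar\circ'$ is obtained from the Gerstenhaber composition by summing over shuffles.

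The first step is to verify that $l'_2$ on $s\frakm'$ is the shifted Nijenhuis--Richardson bracket, so that $s\frakm'$ is a graded Lie algebra; this is classical \cite{NR67}. The second step is to check the remaining components directly, in the same way as the appendix proof of Theorem~\ref{thm: Linfinity for ass case}. Since $l'_i$ with $i\ge2$ is linear in the single $sf$ slot and all other components vanish, the generalised Jacobi identity for inputs $(sf,\xi_1,\dots,\xi_{n-1})$ or $(sf,sg,\xi_1,\dots)$ splits into two types of terms:
\begin{itemize}
\item terms $l'_{j}(l'_k(sf,\xi\ldots),\xi\ldots)$, where an inner output in $\fraka'$ is fed to an outer operation, together with $l'_1$-terms;
\item terms $l'_{j}(l'_2(sf,sg),\xi\ldots)$ and $l'(sf, l'(sg,\ldots))$.
\end{itemize}
Expanding via shuffles, each expression becomes a signed sum of trees: $f$ with some $\xi$'s plugged into inputs, possibly with one $\xi$ (or $g$) grafted on top. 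The coefficient bookkeeping reduces to products of $A$'s and $B$'s. These cancel precisely by Lemma~\ref{key lemma}, i.e.\ $A_mA_n+A_{m+n+1}=A_{m+1}B_{n+1}$ and $A_mB_n+B_{m+n+1}=B_{m+1}B_{n+1}$, exactly as in the associative case. Terms with a $\xi$ grafted on top of another $\xi$ appear twice with opposite signs, because $\tau$ ranges over shuffles and the graded-symmetry sign $\epsilon$ flips under the transposition.

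The main obstacle is the sign bookkeeping for the shuffle permutations $\tau\in\Sh(m_1+1,\dots,n+3-i,m_k)$, in particular the factor $(-1)^{(n+\sum_{j<k}m_j)m_k}$. To control it, I would first work in the associative setting and show that the antisymmetrization map intertwines $l'_i$ with $l_i$ up to the combinatorial factors. This requires $\mathrm{sgn}$-twisting all inputs and observing that a shuffle sum composed with antisymmetrization equals antisymmetrization of the sum over all orderings. If that intertwining holds, then, because $\iota$ is injective in characteristic $0$, the identities for $l'$ follow from those for $l$. If the intertwining fails because of normalization constants, I would fall back on the direct tree-by-tree cancellation described above.
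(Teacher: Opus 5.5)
Your primary route breaks at its central step. The map $\iota\colon\Hom(\wedge^{n+1}A,A)\to\Hom(A^{\otimes n+1},A)$ is, up to the factor $(n+1)!$, just the inclusion of antisymmetric cochains. As your own first sentence says, this map is not compatible with the brackets, so no injectivity argument can pass through it.

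The failure already occurs for $l_2$. If $P$ is a Lie bracket, then $[P,P]_{\NR}$ is twice the Jacobiator and vanishes. On the other hand, $[\iota P,\iota P]_{\G}$ is a nonzero multiple of the associator $(x,y,z)\mapsto P(P(x,y),z)-P(x,P(y,z))$, which is nonzero for, e.g., $\mathfrak{sl}_2$. One side is zero and the other is not, so this is not a normalization problem that combinatorial factors could absorb. Your fallback is therefore aimed at the wrong failure mode.

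The fallback itself is only an outline of a direct verification, not a proof. Its first type of terms also vanishes identically, because every $l'_j$ with no argument in $s\frakm'$ is zero. All the content lies in inputs $(sf,sg,\xi_1,\dots)$. There the terms of $l'(sg,l'(sf,\dots),\dots)$, $l'(sf,l'(sg,\dots),\dots)$, $l'(l'_1(sf),sg,\dots)$ and $l'(l'_2(sf,sg),\dots)$ must be matched tree by tree via Lemma \ref{key lemma}, as in the Appendix.

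Your idea of deducing the statement from Theorem \ref{thm: Linfinity for ass case} can be saved by reversing the direction and using surjectivity instead of injectivity. The paper does not reprove this theorem; it quotes it, and what it proves is Proposition \ref{L infty morphism}.

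Let $\phi\colon s\frakm\oplus\fraka\to s\frakm'\oplus\fraka'$ be total antisymmetrization, $h\mapsto\sum_{\sigma}\mathrm{sgn}(\sigma)\,h\sigma^{-1}$. It has degree $0$. Proposition \ref{L infty morphism} asserts $\phi\circ l_i=l'_i\circ\phi^{\otimes i}$ on \emph{all} inputs, not only antisymmetric ones. This is the shuffle-counting identity behind the relation between $\bar\circ$ and $\bar\circ'$ that you quote (Lemmas \ref{bijection permutations} and \ref{general bijection permutations}). It involves only the formulas, so it does not presuppose the theorem.

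In characteristic $0$, $\phi$ is surjective, because an antisymmetric cochain $h$ of arity $k$ satisfies $\phi(h)=k!\,h$. So, given $y_1,\dots,y_N$, choose preimages $x_j$. The Koszul signs agree, so the generalised Jacobi expression for $\{l'_i\}$ at $(y_j)$ equals $\phi$ of the one for $\{l_i\}$ at $(x_j)$. That expression vanishes by Theorem \ref{thm: Linfinity for ass case}, and graded symmetry transfers the same way.

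The real work, including your sign bookkeeping for $(-1)^{(n+\sum_{j<k}m_j)m_k}$, therefore goes into this intertwining on arbitrary cochains.
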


    \begin{prop}\label{L infty morphism}
    	With the same underlying space $A$, we have the  $L_\infty[1]$-algebra $s \mathfrak{m}\oplus\fraka$  of extended Rota-Baxter algebras and  $L_\infty[1]$-algebra $s \mathfrak{m'}\oplus\fraka'$ of extended Rota-Baxter Lie algebras. Then there exists an $L_\infty[1]$-algebra morphism $\phi:(s \mathfrak{m}\oplus\fraka,\{l_i\})\to (s \mathfrak{m}'\oplus\fraka',\{l'_i\}) $  where $\phi$ is given by:\\
    	$sf \mapsto \sum_{\sigma \in \rmS_n} sgn(\sigma) sf\sigma^{-1},$ \\
    	$g\mapsto \sum_{\sigma \in \rmS_m} sgn(\sigma) g \sigma^{-1}$,\\
    	for homogeneous elements	$f\in\Hom(A^{\otimes n},A)\subseteq \frakm$, $g\in\Hom(A^{\otimes m},A)\subseteq \fraka$  
    \end{prop}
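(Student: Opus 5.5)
The plan is to show that $\phi$ is a \emph{strict} $L_\infty[1]$-morphism: $\phi$ has degree $0$ and
$$\phi(l_i(x_1,\dots,x_i))=l'_i(\phi(x_1),\dots,\phi(x_i))$$
for every $i$ and all homogeneous arguments. The basic tool is the full antisymmetrization $\mathrm{Alt}(f)=\sum_{\sigma}sgn(\sigma)f\sigma^{-1}$, so that $\phi(sf)=s\,\mathrm{Alt}(f)$ and $\phi(g)=\mathrm{Alt}(g)$. It preserves arity, hence degree. Since all components of $l_i$ other than $l_1(sf)$, $l_2(sf,sg)$ and $l_i(sf,\xi_1,\dots,\xi_{i-1})$ vanish, and the same holds for $l'_i$, only these three families need checking. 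By graded symmetry, one of each ordering suffices.

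For $l_1$, both sides equal $-A_n\,\mathrm{Alt}(f)$, because $\mathrm{Alt}$ is linear and arity-preserving. For $l_2$ I would invoke the classical fact that antisymmetrization is a morphism of graded Lie algebras
$$(\Hom(\T(A),A),[\cdot,\cdot]_{\G})\to(\Hom(\wedge A,A),[\cdot,\cdot]_{\NR}),$$
i.e.\ $\mathrm{Alt}([f,g]_{\G})=[\mathrm{Alt} f,\mathrm{Alt} g]_{\NR}$. This is the standard statement that the commutator of an associative (or $A_\infty$) structure is Lie (or $L_\infty$). To verify it directly, expand $\mathrm{Alt}(f\bar\circ_i g)$ and regroup the permutations of $S_{n+m+1}$ as a shuffle choosing which inputs enter $g$, times permutations inside $g$, times permutations of the remaining slots of $f$. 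Moving the $g$-block to position $i$ produces exactly the sign $(-1)^{(i-1)m}$ of the Gerstenhaber composition. Summing over $i$ collapses $\sum_i(-1)^{(i-1)m}f\bar\circ_i g$ to $(n+1)$ copies of the single term in $\mathrm{Alt}f\,\bar\circ'\,\mathrm{Alt}g$. The multiplicity factor must be matched against the normalization in $\bar\circ'$, which sums only over $\Sh(q+1,p)$; this is a place to check carefully.

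For the higher brackets the same regrouping applies to the two kinds of terms in $l_i$. The first kind is the iterated insertion $(\cdots(f\bar\circ\xi_{\sigma(1)})\bar\circ_{a_2}\cdots)\bar\circ_{a_{i-1}}\xi_{\sigma(i-1)}$, summed over $\sigma\in S_{i-1}$ and admissible positions $a_j$, with coefficient $-A_{n-i+1}$. After antisymmetrizing, each such term becomes $\pm\big(f(\xi_1\otimes\cdots\otimes\xi_{i-1}\otimes\Id^{\otimes n+2-i})\big)\tau^{-1}$, with $\tau$ a shuffle and $f,\xi_j$ replaced by their antisymmetrizations. The sum over all placements of the $\xi$'s among the $n+1$ slots is absorbed by the antisymmetry of $\mathrm{Alt}f$, and the sum over $\sigma$ is absorbed by the sum over shuffles. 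The second kind is $\xi_{\sigma(1)}\bar\circ(\cdots)$ with coefficient $-B_{n-i+3}$. It is handled the same way: the outer $\xi_{\sigma(1)}$ becomes the $\xi_k$ of Theorem~\ref{thm: Linfinity for extend Lie}, and the Koszul sign $(-1)^{nm_{\sigma(1)}}\epsilon(\sigma)$ becomes $(-1)^{(n+\sum_{j<k}m_j)m_k}sgn(\tau)$. The special case $n=i-2$ uses the convention $A_{-1}=-1$, $B_1=1$ and is covered identically, with the $\underline{\id}$ insertions matching the $\Id^{\otimes n+3-i}$ factor.

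I expect the main obstacle to be the sign and multiplicity bookkeeping in this last step. Specifically, one must show that the Koszul signs $\epsilon(\sigma)$, the Gerstenhaber signs hidden in $\bar\circ_{a_j}$, and $sgn$ of the regrouped permutation combine into exactly $sgn(\tau)$ times the stated prefactor. The numerical factors $(n+1)!$, $\prod_j(m_j+1)!$ and the counts of admissible positions must also agree on both sides. I would first verify the identity on the lowest cases $l_2(sf,\xi)$ and $l_3(sf,\xi_1,\xi_2)$ with small arities, comparing against Theorem~\ref{Thm: MC elements in ex Linifnity}. There the Lie side must reproduce the extended Rota-Baxter identity for the commutator bracket, since the image of an MC element must be MC by Proposition~\ref{relations}. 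This calibrates the normalization before carrying out the general regrouping argument by induction on $i$.
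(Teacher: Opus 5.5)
Your strategy is the paper's: show $\phi$ is strict, check $l_1$ directly, reduce $l_2$ to $\mathrm{Alt}(f\bar\circ g)=\mathrm{Alt}(f)\bar\circ'\mathrm{Alt}(g)$, and handle the higher brackets by regrouping permutations. That regrouping is what the paper's two ``natural bijection'' lemmas on sets of permutations do. However, your description of the regrouping is wrong at exactly the step that carries the proof.

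Summing over $i$ does not produce $n+1$ copies of anything. Nor is there a normalization to calibrate on small cases: $\phi$ is fixed, so a leftover factor would make the proposition false. The correct statement is an exact bijection:
\begin{enumerate}
\item Write $\mathrm{Alt}(f)=\sum_{\tau\in\rmS_{n+1}}sgn(\tau)f\tau^{-1}$. Sort the terms of $\mathrm{Alt}(f)\bar\circ'\mathrm{Alt}(g)$ by the slot $i$ (with $\tau(i)=1$) that receives the output of $\mathrm{Alt}(g)$.
\item The triples $(\theta,\rho,\tau)\in\Sh(m+1,n)\times\rmS_{m+1}\times\rmS_{n+1}$ with $\tau(i)=1$ correspond bijectively to the $\sigma\in\rmS_{n+m+1}$ indexing $\mathrm{Alt}(f\bar\circ_i g)$. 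Record which inputs feed $g$, their order, and the order of the remaining inputs.
\item Under this correspondence, $sgn(\theta)sgn(\rho)sgn(\tau)=(-1)^{(i-1)(m+1)}(-1)^{i-1}sgn(\sigma)=(-1)^{(i-1)m}sgn(\sigma)$. So moving the $g$-block alone gives $(-1)^{(i-1)(m+1)}$, not the Gerstenhaber sign; the missing $(-1)^{i-1}$ comes from moving the output slot inside $\mathrm{Alt}(f)$.
\end{enumerate}
Hence $(-1)^{(i-1)m}\mathrm{Alt}(f\bar\circ_i g)$ is precisely the slot-$i$ part of $\mathrm{Alt}(f)\bar\circ'\mathrm{Alt}(g)$. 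These parts are disjoint, and summing over $i$ gives the identity on the nose. The counts agree: $(n+1)\,(n+m+1)!=\binom{n+m+1}{m+1}(m+1)!\,(n+1)!$.

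In the higher brackets, the sum over $\sigma$ is not absorbed by the shuffles. Together with the admissible positions $a_j$, $\sigma$ forms an injective placement of $\xi_1,\dots,\xi_{i-1}$ into the $n+1$ slots of $f$. That placement is a coset of $\rmS_{n+1}$ modulo permutations of the $n+2-i$ identity slots, so it is absorbed by $\mathrm{Alt}(f)$. The shuffles come instead from the global alternation over $\rmS_N$, with $N=n+1+\sum_j m_j$. It splits into a shuffle in $\Sh(m_1+1,\dots,m_{i-1}+1,n+2-i)$, the internal alternations of the $\xi_j$, and the order of the inputs fed directly to $f$. Both index sets have $N!\,(n+1)!/(n+2-i)!$ elements, so again no multiplicity arises. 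The Koszul sign $\epsilon(\sigma)$ must be recovered from the sign of the slot permutation combined with the block-move signs, as in the $l_2$ case.

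For the $B$-terms you can avoid a second regrouping. For fixed $\sigma(1)=k$ they have the form $\xi_k\bar\circ F$, with $F$ an iterated insertion of the other $\xi$'s into $f$. The $l_2$ identity $\mathrm{Alt}(\xi_k\bar\circ F)=\mathrm{Alt}(\xi_k)\bar\circ'\mathrm{Alt}(F)$ then reduces them to the first family. Moving $\xi_k$ to the front turns $\epsilon(\sigma)(-1)^{nm_k}$ into $(-1)^{(n+\sum_{j<k}m_j)m_k}$ times the Koszul sign of the remaining $\xi$'s, which is the sign in the Lie-side formula.
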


    \begin{proof}
    	
    	For homogeneous elements	$f\in\Hom(A^{\otimes n+1},A)\subseteq \frakm$, $g\in\Hom(A^{\otimes m+1},A)\subseteq \frakm$	and $\xi_j\in\Hom(A^{\otimes m_j+1},A)\subseteq \fraka$, $1\leq j\leq i-1$.
    	
    	It's obvious that $\phi:s\mathfrak{m}\oplus\fraka\to s \mathfrak{m}'\oplus\fraka'$ is truly a linear map.
    	So we only need to show $\phi(l_1(sf))=l'_1(\phi(sf))$, $\phi(l_2(sf,sg))=l'_2(\phi(sf),\phi(sg))$ and $\phi(l_n(sf,\xi_1,\dots,\xi_{n-1}))=l'_n(\phi(sf),\dots,\phi(\xi_{n-1}))$.
    	
    	It's trivial for case one.
    	
    	For case two, it's suffices to show $\phi(f\bar{\circ} g)=\phi(f)\bar{\circ}^{'}\phi(g)$
    	\begin{equation*}
    		\begin{aligned}
    			&\phi(f\bar{\circ} g)(x_1,\dots,x_{m+n+1})\\
    			=&\sum_{i=1}^{n+1} (-1)^{(i-1)m}\phi(f\bar{\circ}_i g) (x_1,\dots,x_{m+n+1})\\
    			=&\sum_{i=1}^{n+1}(-1)^{(i-1)m}\sum_{\sigma \in \rmS_{m+n+1}}sgn(\sigma) f(x_{\sigma(1)},\dots,x_{\sigma(i-1)},g(x_{\sigma(i)},\dots,x_{\sigma(i+n)}),x_{\sigma(i+1)},\dots,x_{\sigma(m+n+1)})\\
    		   \xlongequal{\mbox{lemma \ref{bijection permutations}}}&\sum_{i=1}^{m+1}(-1)^{(i-1)m}\sum_{\tau \in n+\rmS_{n+1},\sigma \in \rmS_{m+1},\theta\in \Sh(m+1,n)}sgn(\sigma)sgn(\tau)sgn(\theta)(-1)^{(n+1)(i-1)}\\
    		   &f(x_{\theta\tau(n+2)},\dots,x_{\theta\tau(n+i)},g(x_{\theta\sigma(1)},\dots,x_{\theta\sigma(1+n)}),x_{\theta\tau(n+1+i)},\dots,x_{\theta\tau(n+1+m)})\\
    		   =&\sum_{\theta\in \Sh(m+1,n)}\phi(f)(\phi(g)(x_{\theta(1)},\dots,x_{\theta(n+1)}),x_{\theta(n+2)},\dots,x_{\theta(m+n+1)})\\
    		   =&\phi(f)\bar{\circ}^{'}\phi(g)(x_1,\dots,x_{m+n+1}),
    		\end{aligned}
    	\end{equation*} 
    	where $\tau(n+1)=i$, the position of $g$.
    	
    	For case three, we have 
    	\begin{equation*}
    		\begin{aligned} 
    			&\sum\limits_{\substack{\sigma\in \rmS_{i-1}\\a_j+m_{\sigma(j)}+1\le a_{j+1}}}\epsilon(\sigma)\phi\big(\big(\cdots ((f\bar{\circ}_{a_1}\xi_{\sigma(1)})\bar{\circ}_{a_2}\xi_{\sigma(2)})\bar{\circ}_{a_3}\cdots\big)\bar{\circ}_{a_{i-1}}\xi_{\sigma(i-1)}\big)\\
    			=&\sum\limits_{\substack{\sigma\in \rmS_{i-1}\\a_j+m_{\sigma(j)}+1\le a_{j+1}}}\epsilon(\sigma)\sum_{\tau \in \rmS_{n+1+\sum_{j=1}^{i-1}m_j}}\big(\big(\cdots ((f\bar{\circ}_{a_1}\xi_{\sigma(1)})\bar{\circ}_{a_2}\xi_{\sigma(2)})\bar{\circ}_{a_3}\cdots\big)\bar{\circ}_{a_{i-1}}\xi_{\sigma(i-1)}\big)\tau^{-1}\\
    			&\xlongequal{\mbox{Lemma \ref{general bijection permutations} (i)}}\sum\limits_{\tau\in\Sh(m_1+1,\dots,m_{i-1}+1,n+2-i)}sgn(\tau)\\
    			&\big(\sum_{\tau_1 \in \rmS_{n+1}}(sgn(\tau_1)f)(\sum_{\sigma_1 \in \rmS_{m_1+1}}(sgn(\sigma_1)\xi_1)\sigma_1^{-1}\otimes\dots\otimes\sum_{\sigma_{i-1} \in \rmS_{m_{i-1}+1}}(sgn(\sigma_{i-1})\xi_{i-1})\sigma_{i-1}^{-1}\otimes\Id^{\otimes n+2-i})\tau_1^{-1}\big)\tau^{-1}\\
    			=&\sum\limits_{\tau\in\Sh(m_1+1,\dots,m_{i-1}+1,n+2-i)}sgn(\tau)\big(\phi(f)(\phi(\xi_1)\otimes\dots\otimes\phi(\xi_{i-1})\otimes\Id^{\otimes n+2-i})\big)\tau^{-1}
    	\end{aligned}
    \end{equation*}
    and similar for
    \begin{equation*}
    	\begin{aligned} 
    		&\sum\limits_{\substack{\sigma\in \rmS_{i-1}\\a_j+m_{\sigma(j)}+1\le a_{j+1}, j\ge 2}} \epsilon(\sigma) (-1)^{nm_{\sigma(1)}}  \phi(\xi_{\sigma(1)}\bar{\circ}_{a_1}\big(\cdots ((f\bar{\circ}_{a_2}\xi_{\sigma(2)})\bar{\circ}_{a_3}\cdots)\bar{\circ}_{a_{i-1}}\xi_{\sigma(i-1)}\big))\\
    		&=\sum_{k=1}^{i-1}\sum\limits_{\tau\in\Sh(m_1+1,\dots,m_{i-1}+1,n+3-i,m_k)}sgn(\tau)(-1)^{(n+\sum\limits_{j=1 }^{k-1}m_j)m_k}  \phi(\xi_k)  \big(\phi(f)(\phi(\xi_1)\otimes\dots\otimes\phi(\xi_{i-1})\otimes\Id^{\otimes n+3-i})\o \Id^{\otimes m_k}\big)\tau^{-1}	
    	\end{aligned}
    \end{equation*} by Lemma \ref{general bijection permutations} (ii).
    \end{proof}
     \begin{rmk}
     	The  following lemmas give a correspondence of two different presentations of an  iterative composite of $f,g$ and $\xi_i$.
     \end{rmk}
 
     Denote $[n]:=(1,2,\dots,n)$.

    \begin{lem}\label{bijection permutations}
    	There is a 'natural' bijection between the set  $[n]\times \rmS_{m+n-1}$ and the set of permutations $\rmS_{m}\times \rmS_{n}\times \Sh(m,n-1)$. 
    \end{lem}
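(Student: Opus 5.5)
We would first check that both sides have the same cardinality, $n\cdot(m+n-1)!$ and $m!\,n!\,\binom{m+n-1}{m}=m!\,n!\,\frac{(m+n-1)!}{m!\,(n-1)!}=n\,(m+n-1)!$, and then write down an explicit bijection. This map should be the one used in the proof of Proposition~\ref{L infty morphism}. There, $i\in[n]$ is the slot of $f$ into which $g$ is inserted. A permutation $\rho\in\rmS_{m+n-1}$ relabels the inputs of the composite $f\bar{\circ}_i g$, which has $m+n-1$ arguments when $f$ has $n$ inputs and $g$ has $m$.

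Given $(i,\rho)$, the $m$ consecutive positions $i,\dots,i+m-1$ are the arguments of $g$, and the remaining $n-1$ positions are the other arguments of $f$. Let $\theta\in\Sh(m,n-1)$ be the unique shuffle whose image of $\{1,\dots,m\}$ is the increasing rearrangement of $\rho(\{i,\dots,i+m-1\})$; its image of $\{m+1,\dots,m+n-1\}$ is then the increasing rearrangement of the complementary set. Next, $\sigma\in\rmS_m$ records the relative order of $\rho(i),\dots,\rho(i+m-1)$, so that $\rho(i+k-1)=\theta(\sigma(k))$. Finally, $\tau\in\rmS_n$ records how the $n$ slots of $f$ are filled. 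Slot $i$ is filled by the output of $g$, which we give the label $1$ (matching the paper's convention ``$\tau(n+1)=i$'' up to a shift). The other $n-1$ slots are filled, in order, by the relative ranks of $\rho$ on the complementary positions, shifted by one.

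The inverse map reads $i$ off $\tau$ as $i=\tau^{-1}(1)$, i.e.\ the slot where $g$ sits. It then rebuilds $\rho$ by sending $g$'s block through $\theta\circ\sigma$ and the other entries through $\theta$ (shifted by $m$) composed with the remaining ranks of $\tau$. We would verify directly that the two constructions are mutually inverse.

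We would also record the sign relation that Proposition~\ref{L infty morphism} needs:
\[
\mathrm{sgn}(\rho)=\mathrm{sgn}(\sigma)\,\mathrm{sgn}(\tau)\,\mathrm{sgn}(\theta)\,(-1)^{(m-1)(i-1)}.
\]
The factor $(-1)^{(m-1)(i-1)}$ comes from moving the block of $m$ arguments of $g$ past the $i-1$ arguments preceding it. In the proposition's indexing, where $g$ has $n+1$ inputs, this factor is $(-1)^{n(i-1)}$.

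The main obstacle is the bookkeeping of conventions: $f\sigma^{-1}$ versus $f\sigma$, which side $\tau$ acts on, and the shift conventions the paper uses inconsistently. We would fix one convention at the outset and test the sign formula on the case $m=n=2$ before doing the general count.
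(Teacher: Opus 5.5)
Your proposal is correct and is essentially the paper's argument. The paper proves the general Lemma~\ref{general bijection permutations}, of which this is the case $k=1$, by noting that the cardinalities agree and then writing down the same map: $\theta$ is the shuffle reordering the block of $g$'s inputs against its complement, $\sigma$ is the relative order inside that block, and $\tau$ is the permutation of $f$'s slots with $g$'s output given the minimal label. Your sign identity $\mathrm{sgn}(\rho)=\mathrm{sgn}(\sigma)\,\mathrm{sgn}(\tau)\,\mathrm{sgn}(\theta)\,(-1)^{(m-1)(i-1)}$ is also correct under your convention, and it is exactly what the paper uses without proof in Proposition~\ref{L infty morphism}.
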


    Generally, we have:
    
    \begin{lem}\ \label{general bijection permutations}
    	
    	\begin{enumerate}
    		\item[(i)]There is a 'natural' bijection between the set  $\rmS_{k}\times [\left(\begin{array}{cc} n\\ k \end{array}\right)]\times \rmS_{n-k+\sum_{i=1}^{k}m_i}$ and the set of permutations $\rmS_{n}\times \rmS_{m_1}\times\dots\times \rmS_{m_k}\times \Sh(m_1,\dots,m_k,n-k).$

    		\item[(ii)]	And there is a 'natural' bijection between the set  $[m_k]\times\rmS_{k-1}\times [\left(\begin{array}{cc} n\\ k-1 \end{array}\right)]\times \rmS_{n-k+\sum_{i=1}^{k}m_i}$ and the set of permutations $\rmS_{n}\times \rmS_{m_1}\times\dots\times S_{m_k}\times \Sh(m_1,\dots,m_{k-1},n-k+1,m_k-1).$
    	\end{enumerate}

    \end{lem}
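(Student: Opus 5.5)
The plan is to make the bijections explicit by "unshuffling" a single permutation in the large symmetric group into its block-internal orderings and a shuffle recording which letters go to which block. For Lemma~\ref{bijection permutations}, read $[n]\times \rmS_{m+n-1}$ as follows: a pair $(i,\rho)$ describes the term $f\bar{\circ}_i g$ with inputs permuted by $\rho$. The inner operation $g$ occupies the consecutive slots $i,\dots,i+m-1$ of $f\bar{\circ}_i g$. The remaining $n-1$ slots of $f$ are the outer inputs.

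I would define the map as follows.
\begin{enumerate}
\item[(1)] Let $\theta\in\Sh(m,n-1)$ be the unique shuffle with $\theta(\{1,\dots,m\})=\rho(\{i,\dots,i+m-1\})$ as sets, each block listed increasingly.
\item[(2)] Let $\sigma\in\rmS_m$ be the permutation recording the relative order of $\rho(i),\dots,\rho(i+m-1)$ inside that block.
\item[(3)] Let $\tau\in\rmS_n$ be the permutation recording the ordering of the $n$ slots of $f$. Here the whole $g$-block is treated as one letter, placed in position $i$, and the remaining letters are ordered by the relative order of the outer inputs.
\end{enumerate}
The inverse map reassembles $\rho$ from the data. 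The position of the $g$-letter in $\tau$ recovers $i$, and $\rho=\theta\circ(\sigma\sqcup\tau')$, where $\tau'$ is $\tau$ with the $g$-letter removed. This is the standard decomposition $\rmS_N\cong(\rmS_{a}\times\rmS_{b})\times\Sh(a,b)$, combined with the extra choice of insertion point.

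The point to verify is that both sets have cardinality $n\,(m+n-1)!$. On the left this is immediate. On the right it is $m!\,n!\binom{m+n-1}{m}$. Indeed $m!\,n!\binom{m+n-1}{m}=n\,(m+n-1)!$, so equal cardinalities plus injectivity of the reconstruction suffice. I would also record the sign relation $sgn(\rho)=sgn(\sigma)sgn(\tau)sgn(\theta)(-1)^{(m-1)(i-1)}$, the sign factor $(-1)^{(n+1)(i-1)}$ used in Proposition~\ref{L infty morphism} (with that proof's labelling of arities). Moving the $g$-block from the front past $i-1$ letters costs $(m-1)(i-1)$ transpositions modulo the block-swap convention. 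This sign bookkeeping is what "natural" must mean.

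For Lemma~\ref{general bijection permutations}(i), I would iterate the same construction with $k$ inner blocks. An element of $\rmS_k\times[\binom nk]$ chooses the ordered placement of $\xi_1,\dots,\xi_k$ among the $n$ slots of $f$: $\binom nk$ choices of positions and $k!$ orderings. The factor $\rmS_{n-k+\sum m_i}$ permutes the inputs. Unshuffling the inputs along the $k+1$ blocks (the $k$ blocks of sizes $m_j$ and the $n-k$ remaining outer inputs) gives an element of $\Sh(m_1,\dots,m_k,n-k)$ and internal orders in $\rmS_{m_j}$. The slot data then combine into one element of $\rmS_n$. Cardinalities again agree by a multinomial identity.

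Part (ii) is analogous with an outer $\xi_k$ of arity $m_k$ composed on the left. The extra factor $[m_k]$ chooses which input slot of $\xi_k$ receives the composite. The last shuffle block then has size $m_k-1$, for the remaining inputs of $\xi_k$.

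The main obstacle is not the bijection itself but making the Koszul signs match. The sign of the decomposed permutation must reproduce exactly the signs $(-1)^{(i-1)n}$ in $\bar\circ$ and the factors $(-1)^{(n+\sum m_j)m_k}$. I would handle this by computing the sign of block transpositions explicitly in the case $k=1$, and then inducting on $k$ by composing one block at a time.
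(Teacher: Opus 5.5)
Your proposal is correct and follows essentially the paper's route: the paper also sends the data to the shuffle $\theta$ recording which inputs land in each block, the internal orders $\sigma_j$ of the blocks, and the permutation $\tau_1\in\rmS_n$ obtained by collapsing each block to a single letter, and it treats case (ii) ``the same'' way. Your sign bookkeeping goes beyond what the lemma asserts. Note, though, that $\rho=\theta\circ(\sigma\sqcup\tau')$ must also include the permutation moving the $g$-block to slot $i$, and that your factor $(-1)^{(m-1)(i-1)}$ holds under the paper's convention of giving the collapsed letters the smallest values ($\sigma(j)-k$) in $\tau_1$.
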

    \begin{proof}
    	It's trivial that there is a bijection if we calculate the size of the sets, but we  need a 'natural' bijection here.
    	
    	For case (i). Replacing  $\rmS_{m_j}$ by $m_1+\dots+m_{j-1}+\rmS_{m_j}$ for $j\ge 1$, denoted by  $\rmS_{m_j}$ for $j\ge 1$. 
    	
    	Define the map $f: \rmS_{k}\times [\left(\begin{array}{cc} n\\ k \end{array}\right)]\times \rmS_{n-k+\sum_{i=1}^{k}m_i} \to \rmS_{n}\times \rmS_{m_1}\times\dots\times \rmS_{m_k}\times \Sh(m_1,\dots,m_k,n-k)$ is given by:
    	
    	For $\sigma\in \rmS_{k},a\le i_1<i_2<\dots<i_k\le n ,\tau\in \rmS_{n-k+\sum_{i=1}^{k}m_i}$ and $\tau_1\in \rmS_{n},\sigma_1\in  \rmS_{m_1},\dots,\sigma_k\in \rmS_{m_k},\theta\in \Sh(m_1,\dots,m_k,n-k)$, we have
    	
    	$(\sigma,(i_1,\dots,i_k),\tau) \mapsto(\tau_1,\sigma_1,\dots,\sigma_k,\theta)$\\  
    	where $\theta$ is a $(m_1,\dots,m_k,n-k)$-shuffle given by the reodering of $ I_1\times I_2\times \cdots \times (I-\cup_{j=1}^{k}I_j)$,\\
    	here $I_j:=(\tau(i_k),\dots,\tau(i_k+m_j-1))$ for $j=\sigma(k)$, $k\ge j\ge 1$, let $I'_j$ be the set of reordering of $I_j$  and $I:=(\tau(1),\dots,\tau(n-k+\sum_{j=1}^{k}m_j))$.
    	
    	$\sigma_j$ is the permutation such that $\sigma_j I'_j=I_j$.
    	
    	And $\tau_1$ is the permutation induced by $(\tau(1),\dots,\tau(i_1-1),\sigma(1)-k,\tau(i_1+m_{\sigma(1)}),\dots,\tau(i_2-1),\sigma(2)-k,\tau(i_2+m_{\sigma(2)}),\dots,\dots,\tau(i_k-1),\sigma(k)-k,\tau(i_k+m_{\sigma(k)}),\dots,\tau(n-k+\sum_{i=1}^{k}m_i))$.
    	
    	The same analysis for case (ii).

    \end{proof}

    Now we know that $(A,\pi,T)$ is an  extended Rota-Baxter  algebra if and only if   $(s\pi,T)$ is a Maurer-Cartan element of $(s \mathfrak{m}\oplus\fraka,\{l_i\})$ by Theorem \ref{Thm: MC elements in ex Linifnity}. And from Proposition \ref{relations}, we have the following proposition:
    
    \begin{prop}
    	It's easy to see that an extended Rota-Baxter algebra $(A,\pi,T)$ induces an extended Rota-Baxter Lie algebra on $A$, where the Lie bracket $[-,-]$ is given by the commutator. Moreover, there exists an induced cochain map from the cochain complex of the extended Rota-Baxter algebra $(A,\pi,T)$ to the cochain complex of the extended Rota-Baxter Lie algebra $(A,[-,-],T)$.
    \end{prop}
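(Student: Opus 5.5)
The plan is to combine Theorem \ref{Thm: MC elements in ex Linifnity}, Proposition \ref{L infty morphism} and Proposition \ref{relations}, using the Lie analogue of the Maurer-Cartan characterization for the controlling algebra of Theorem \ref{thm: Linfinity for extend Lie}.

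First, since $(A,\pi,T)$ is an extended Rota-Baxter algebra, Theorem \ref{Thm: MC elements in ex Linifnity} says that $(s\pi,T)$ is a Maurer-Cartan element of $(s\frakm\oplus\fraka,\{l_i\})$. Next apply the morphism $\phi$ of Proposition \ref{L infty morphism}. It is strict: it has only a linear component and commutes with every $l_i$. Hence Proposition \ref{relations} shows that $\phi(s\pi,T)$ is a Maurer-Cartan element of $(s\frakm'\oplus\fraka',\{l'_i\})$. Explicitly, $\phi(s\pi)=s(\pi-\pi\circ\tau)$, where $\tau$ is the flip, which is $s$ of the commutator bracket $[a,b]=ab-ba$. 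Also $\phi(T)=T$, because $\rmS_1$ is trivial.

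Second, I will identify Maurer-Cartan elements in the Lie controlling algebra. The argument is the exact analogue of Theorem \ref{Thm: MC elements in ex Linifnity}. The component in $s\frakm'$ gives $[\pi',\pi']_{\NR}=0$, which is the Jacobi identity for $\pi'=[-,-]$. The component in $\fraka'$ is $l'_1(s\pi')+l'_2(s\pi',T)+\frac12 l'_3(s\pi',T,T)$; higher terms vanish because $\pi'$ has arity $2$. Evaluating on $(x,y)$ with the formulas of Theorem \ref{thm: Linfinity for extend Lie} (with $A_1=\lambda$ and $B_2=\mu$) should give $[Tx,Ty]-T([Tx,y]+[x,Ty]+\mu[x,y])-\lambda[x,y]=0$. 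So $(A,[-,-],T)$ is an extended Rota-Baxter Lie algebra. As a sanity check, this can also be verified directly: expand $T(a)T(b)-T(b)T(a)$ using the defining identity twice and subtract, and the result is linear in the commutator.

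Third, for the cochain map, I use the second half of Proposition \ref{relations}. $\phi$ induces a strict morphism from the twisted algebra $(s\frakm\oplus\fraka,\{l_n^{(s\pi,T)}\})$ to $(s\frakm'\oplus\fraka',\{l'^{\phi(s\pi,T)}_n\})$. Its arity-one part is the statement $\phi\circ l_1^{(s\pi,T)}=l'^{\phi(s\pi,T)}_1\circ\phi$. By Proposition \ref{cochaincomplexad} and Definition \ref{RBA homology}, the left-hand differential is, up to the sign convention $\partial_{\RBA}$, the cochain complex of $(A,\pi,T)$. The Lie side is, by the same construction, the cochain complex of $(A,[-,-],T)$. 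Since $\phi$ is linear and compatible with the sign change, $\phi$ is the desired cochain map.

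The main obstacle is sign and normalization bookkeeping. One must check that $\phi(s\pi)$ is exactly $s$ of the commutator with the paper's conventions. One must also check that the Lie-side Maurer-Cartan computation indeed reproduces the extended Rota-Baxter Lie identity, since that result is only cited from \cite{yj}. To handle this I would first do the direct commutator computation, so that the first assertion does not depend on those conventions, and use the $L_\infty$ argument only for the cochain map.
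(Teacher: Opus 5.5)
Your proposal is correct and is essentially the paper's own argument. The paper likewise combines three ingredients: the Maurer-Cartan characterization of $(s\pi,T)$ in $s\mathfrak{m}\oplus\mathfrak{a}$; the strict $L_\infty[1]$-morphism $\phi$ into the Lie controlling algebra, which sends $(s\pi,T)$ to $(s[-,-],T)$; and the general fact that a strict morphism preserves Maurer-Cartan elements and intertwines the twisted differentials, so $\phi$ itself is the cochain map. Your additional direct commutator computation for the first assertion is a sensible supplement, since it removes the dependence on the Lie-side Maurer-Cartan characterization that the paper only imports from earlier work.
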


  \smallskip
    Now we consider the homotopy version.
    
   	Let $V=\bigoplus\limits_{i\in \mathbb{Z}}V^i$ be a graded space. 
   Recall $\overline{\rmS}(V)=\bigoplus\limits_{n=1}^\infty \rmS^n(V)$, denote $$\mathfrak{C'}_{\Lie}(V)=\Hom(\overline{\rmS}(V),V)$$ and 
   $$ \mathfrak{C'}_{\RB}(V)=\Hom(\overline{\rmS}(V),V).$$
    Set $$\mathfrak{C'}_{\RBA}(V) =s\mathfrak{C'}_{\Lie}(V)\oplus\mathfrak{C'}_{\RB}(V).$$  It is not difficult to see that    $\mathfrak{C'}_{\RBA}(V)$ is  an $L_\infty$-algebra. The $L_\infty[1]$-structure is given by:
   
   \begin{thm}[Controlling algebra of homotopy version\cite{yj}]\label{thm: Linfinity for homotopy Lie}
   	Keep the above notation and denote $t=\sqrt{\mu^2-4\lambda}$, there exists an $L_\infty[1]$-algebra structure on  ${\mathfrak{C'}_{\RBA}}(V)$, where $l'_i$ are given by 
   	$$l'_1(sf)=-A_n f$$
   	$$
   	l'_2(sf,sg)      =    (-1)^{|f|} s[f,g]_{\NR}, $$
   	and for $i\geq 2$,
   	\begin{equation*}
   		\begin{aligned}
   			l'_i(sf,\xi_1,\cdots,\xi_{i-1})  =&\sum\limits_{\tau\in\Sh(m_1+1,\dots,m_{i-1}+1,n+2-i)}sgn(\tau)\\ &
   			-A_{n-i+1}\big(f(\xi_1\otimes\dots\otimes\xi_{i-1}\otimes\Id^{\otimes n+2-i})\big)\tau^{-1}\\
   			&+\sum_{k=1}^{i-1}\sum\limits_{\tau\in\Sh(m_1+1,\dots,m_{i-1}+1,n+3-i,m_k)}sgn(\tau)(-1)^{(|f|+\sum\limits_{j=1 }^{k-1}|\xi_j|)|\xi_k|} \\ &
   			-B_{n-i+3} \xi_k  \big(f(\xi_1\otimes\dots\otimes\xi_{i-1}\otimes\Id^{\otimes n+3-i})\o \Id^{\otimes m_k}\big)\tau^{-1}	
   		\end{aligned}
   	\end{equation*}
   	
   	For homogeneous elements	$f\in\Hom(\rmS^{n+1}(V),V)\subseteq \mathfrak{C'}_{\Lie}(V)$, $g\in\Hom(\rmS^{m+1}(V),V)\subseteq \mathfrak{C'}_{\Lie}(V)$    	and $\xi_j\in\Hom(\rmS^{m_j+1}(V),V)\subseteq \mathfrak{C'}_{\RB}(V)$, $1\leq j\leq i-1$, and all other components vanish.
   \end{thm}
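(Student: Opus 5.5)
The plan is to deduce the graded statement from the ungraded Lie version, Theorem~\ref{thm: Linfinity for extend Lie}, by repeating the same verification with Koszul signs inserted. I will not try to verify the generalised Jacobi identity directly for arbitrary weight. Instead I organise the check exactly as in the ungraded case.

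First, the brackets that involve only $s\mathfrak{C'}_{\Lie}(V)$ are $l'_2(sf,sg)=(-1)^{|f|}s[f,g]_{\NR}$. On graded symmetric multilinear maps the Nijenhuis--Richardson bracket is a graded Lie bracket; this is the standard fact that $\mathrm{Coder}(\overline{\rmS}^c(V))$ is a graded Lie algebra. So the Jacobi identity for $n=3$ with three inputs from $s\mathfrak{C'}_{\Lie}(V)$ holds. Since all $l'_i$ with two or more arguments from $\mathfrak{C'}_{\RB}(V)$ outside the first slot vanish unless exactly one argument is of the form $sf$, the generalised Jacobi identity only has to be checked on inputs of two shapes: $(sf,\xi_1,\dots,\xi_{n-1})$ and $(sf,sg,\xi_1,\dots,\xi_{n-2})$.

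Second, I treat these two shapes separately.

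For $(sf,sg,\xi_\bullet)$, the identity says that $sf\mapsto l'_{\bullet}(sf,\dots)$ is compatible with $[\cdot,\cdot]_{\NR}$. I will use the observation that each $l'_i(sf,\xi_1,\dots,\xi_{i-1})$ is obtained by inserting the $\xi_j$ into the inputs of $f$ (the $A$-terms), together with one $\xi_k$ applied on top of $f$ (the $B$-terms). These are compositions of coderivations of the symmetric coalgebra. Expanding both sides, the terms in which $g$ sits inside $f$ or $f$ inside $g$ with the $\xi$'s distributed cancel in pairs with opposite Koszul signs, as in the associative proof in the Appendix. The surviving coefficient identities are exactly $A_mA_n+A_{m+n+1}=A_{m+1}B_{n+1}$ and $A_mB_n+B_{m+n+1}=B_{m+1}B_{n+1}$ from Lemma~\ref{key lemma} and Remark~\ref{coefficient}.

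For $(sf,\xi_\bullet)$, the Jacobi identity involves $l'_1(sf)=-A_nf$ fed into $l'_{n}$, and $l'_j(sf,\dots)$ fed into $l'_{k}(\,\cdot\,,\dots)$. The latter makes sense because $l'_j(sf,\dots)\in\mathfrak{C'}_{\RB}(V)$ is again an input for the $\xi$-slots of $l'_k(sf,\dots)$, though never as its first argument. Matching terms by how many $\xi$'s lie below $f$ and whether one lies above, the identity again reduces to the recursion $A_n=\mu A_{n-1}-\lambda A_{n-2}$ and the quadratic identities of Lemma~\ref{key lemma}.

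Third, I handle the signs. The ungraded proof in \cite{yj} carries signs $(-1)^{(n+\sum m_j)m_k}$. In the graded setting each degree $m_j$ is replaced by $|\xi_j|$, and each shuffle sign $sgn(\tau)$ by the Koszul sign, which includes the internal degrees of the elements of $V$. To make this systematic I will pass to the coalgebra picture. Then $f,\xi_j$ correspond to coderivations of $\overline{\rmS}^c(V)$, and compositions $f(\xi_1\otimes\dots\otimes\xi_{i-1}\otimes\Id)$ symmetrised by shuffles are iterated brace operations. In that language the graded signs come automatically from the Koszul rule. The cancellations used in the ungraded proof are formal consequences of the pre-Lie/brace relations, so they continue to hold verbatim.

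The main obstacle is the second shape. There one must check that the two families of terms, the $A$-terms (all $\xi$'s below $f$) and the $B$-terms (one $\xi_k$ above $f$), recombine correctly when nested. I would first verify the cases $i\le 3$ explicitly, to fix sign conventions and to confirm that the coefficient identity needed is precisely Lemma~\ref{key lemma}. Then I would argue the general case by induction on the number of $\xi$'s using the brace relations.
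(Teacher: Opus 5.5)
Your overall method is the paper's: verify the generalised Jacobi identity directly, cancel most terms in pairs by Koszul signs, and close the rest with $A_aA_b+A_{a+b+1}=A_{a+1}B_{b+1}$. (The paper quotes the Lie statement from earlier work and proves the associative analogue exactly this way in the Appendix.) But your case analysis is wrong, and the error matters.

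\textbf{The single-$sf$ shape is vacuous.} Each $l'_i$ vanishes unless exactly one of its arguments lies in $s\mathfrak{C'}_{\Lie}(V)$, or $i=2$ and both do. Hence on inputs $(sf,\xi_1,\dots,\xi_{n-1})$ every term of the Jacobi identity is zero. If $sf$ is used by the inner operation, the outer one has all its arguments in $\mathfrak{C'}_{\RB}(V)$; otherwise the inner one does. In particular $l'_n(l'_1(sf),\xi_\bullet)=0$. Feeding $l'_j(sf,\dots)$ into the $\xi$-slots of $l'_k(sf,\dots)$ would need a second copy of $sf$. So the shape you call the main obstacle contains nothing to check; this is the Appendix's remark that ``all other cases are trivial''.

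\textbf{The two-$sf$ shape is set up incompletely.} All the content sits in $(sf,sg,\xi_1,\dots,\xi_{n-1})$, which involves five families of terms:
\begin{itemize}
\item $l'(l'_1(sf),sg,\xi_\bullet)$ and $l'(l'_1(sg),sf,\xi_\bullet)$;
\item $l'(l'_2(sf,sg),\xi_\bullet)$;
\item $l'(sg,l'(sf,\xi_S),\xi_{S^c})$ and $l'(sf,l'(sg,\xi_S),\xi_{S^c})$.
\end{itemize}
You read this case as ``$sf\mapsto l'(sf,\dots)$ is compatible with $[\cdot,\cdot]_{\NR}$'', which drops the two $l'_1$-families. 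Without them the identity fails for $\lambda\neq 0$.

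For $f=g=\pi$, your version would assert $d_1\circ d_1=0$ with $d_1=l'_2(s\pi,-)$. What actually holds is the curved relation $d_1\circ d_1+d_2(d_0(1),-)=0$, where $d_0(1)=l'_1(s\pi)=-\lambda\pi$ and $d_2=l'_3(s\pi,-,-)$.

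At the level of coefficients, take $|f|=l$ and $|g|=m$, and look at the terms where $g$ is inserted directly into $f$ and no $\xi$ enters $g$.
\begin{itemize}
\item The bracket term contributes $A_{l+m-n+1}$.
\item The nested term $l'_2(sg,l'_n(sf,\xi_\bullet))$ contributes $A_{l-n+1}B_{m+1}=A_{m+1}B_{l-n+1}$.
\item The key identity also needs $A_mA_{l-n}$, which equals $\lambda^2$ for $m=l-n=1$. Only $l'_1(sg)=-A_mg$ supplies it; this is the term (1.2.1) of the Appendix. The term (1.1.2.1) plays the same role when no $\xi$ enters $f$.
\end{itemize}

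You also have the roles of the cancellations reversed. The pairwise cancellations occur among terms where some $\xi$ separates $f$ from $g$. The terms where $f$ and $g$ are composed directly are exactly the ones that need the key identity.

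Once you set up the nontrivial case with all five families, the rest of your plan (Koszul signs via symmetric braces, coefficient identities from the key lemma) proceeds as in the Appendix.
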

   
   Similarly, we have a graded  version of Proposition \ref{L infty morphism}.
   \begin{prop}
   	With the same underlying graded space. There exists an $L_\infty[1]$-algebra morphism $\phi:({\mathfrak{C}_{\RBA}}(A),\{l_i\})\to ({\mathfrak{C'}_{\RBA}}(A),\{l'_i\}) $  where $\phi$ is given by:\\
   	$sf \mapsto \sum_{\sigma \in \rmS_n} \epsilon(\sigma) sf \sigma^{-1},$ \\
   	$g\mapsto \sum_{\sigma \in \rmS_m} \epsilon(\sigma) g \sigma^{-1},$ \\
   	and $\phi_i$ vanish for $i\ge 2$,
   	
   	for homogeneous elements	$f\in\Hom(A^{\otimes n},A)\subseteq \frakm$, $g\in\Hom(A^{\otimes m},A)\subseteq \fraka$.
   \end{prop}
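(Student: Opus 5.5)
The map $\phi$ is the total (Koszul-signed) antisymmetrization $\mathrm{Alt}(f)=\sum_{\sigma}\epsilon(\sigma)f\sigma^{-1}$ on each arity component, applied separately to the $s\mathfrak{C}_{\Alg}(A)$ and $\mathfrak{C}_{\RB}(A)$ summands. Since $\phi_i=0$ for $i\ge 2$, this is a strict $L_\infty[1]$-morphism. I would follow the proof of Proposition~\ref{L infty morphism} verbatim, replacing $sgn$ by the Koszul sign $\epsilon(\sigma;x_1,\dots,x_n)$ and tracking the extra signs coming from the degrees of the multilinear maps.

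The plan has three steps, one for each kind of bracket.

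\begin{enumerate}
\item \emph{The unary bracket.} Here $l_1(sf)=-A_n f$ and $l'_1(sf)=-A_n f$ with the same arity $n$. Since $\mathrm{Alt}$ preserves arity, $\phi l_1=l'_1\phi$ is immediate.

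\item \emph{The binary bracket on $s\mathfrak{C}_{\Alg}$.} It suffices to show $\mathrm{Alt}(f\bar{\circ} g)=\mathrm{Alt}(f)\bar{\circ}'\mathrm{Alt}(g)$ in the graded sense, where $\bar{\circ}'$ is the graded Nijenhuis--Richardson composition with Koszul signs. Then $\mathrm{Alt}[f,g]_{\G}=[\mathrm{Alt} f,\mathrm{Alt} g]_{\NR}$ follows, with the $(-1)^{|f|}$ shift factor matching on both sides. I would do this through the bijection of Lemma~\ref{bijection permutations}. A permutation of $m+n-1$ inputs together with an insertion slot $i$ corresponds to a triple: a permutation of the inputs of $f$ (with $g$'s output placed first), a permutation of the inputs of $g$, and an unshuffle. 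The sign of the composite permutation factors as the product of the three signs, times the sign $(-1)^{(i-1)(\text{arity})}$ for moving $g$'s block to the front.

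\item \emph{The higher brackets $l_i(sf,\xi_1,\dots,\xi_{i-1})$.} I would treat the $A_{n-i+1}$-term and the $B_{n-i+3}$-term separately, using Lemma~\ref{general bijection permutations}(i) and (ii) respectively. Under these bijections, the sum over $\sigma\in\rmS_{i-1}$ with ordered insertion positions $a_j$, composed with the full antisymmetrization, regroups into three pieces:
\begin{itemize}
\item full antisymmetrizations of $f$ and of each $\xi_k$, which produce $\phi(f)$ and $\phi(\xi_k)$;
\item the shuffles in $\Sh(m_1+1,\dots,m_{i-1}+1,n+2-i)$, which appear in Theorem~\ref{thm: Linfinity for homotopy Lie};
\item for the second term, the extra shuffle recording which block feeds $\xi_k$.
\end{itemize}
The coefficients $A_{n-i+1}$ and $B_{n-i+3}$ depend only on arities, so they pass through $\phi$ unchanged. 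All other components vanish on both sides, since both families are nonzero only on inputs of the form $(sf,\xi,\dots)$ or $(sf,sg)$.
\end{enumerate}

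The main obstacle is the sign bookkeeping in the graded setting. The Koszul sign of the global permutation of the inputs $x_j$ must equal the product of four factors:
\begin{itemize}
\item the sign $\epsilon(\sigma)$ of the permutation of the $\xi$'s;
\item the signs from the graded partial compositions, namely $(-1)^{|\xi|\sum|x|}$ for passing $\xi_{\sigma(j)}$ over earlier inputs;
\item the factor $(-1)^{|f||\xi_{\sigma(1)}|}$ in the $B$-term;
\item the Koszul signs of the inner antisymmetrizations and the outer shuffle.
\end{itemize}

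To check this, I would first verify the identity for $i=2$, i.e.\ for $l_2(sf,\xi)$. There the combinatorics reduce to Lemma~\ref{bijection permutations}, and the sign identity is the standard fact that $\mathrm{Alt}$ intertwines the graded Gerstenhaber and Nijenhuis--Richardson compositions. For the general case I would induct on the number of $\xi$'s, inserting one $\xi_{\sigma(j)}$ at a time. At each stage I would use the $i=2$ sign rule together with the fact that reordering the $\xi$'s costs exactly $\epsilon(\sigma)$. This is compatible with the graded symmetry of $l'_i$, because both sides are graded symmetric in the $\xi$'s.
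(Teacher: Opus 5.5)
Your proposal is correct and is essentially the paper's argument. The paper gives no separate proof of this graded statement; it only says it holds ``similarly'' to Proposition~\ref{L infty morphism}, whose proof consists of exactly your three checks: $l_1$ trivially, $l_2$ via $\phi(f\bar{\circ} g)=\phi(f)\bar{\circ}'\phi(g)$ and Lemma~\ref{bijection permutations}, and the higher $l_i$ via the bijections of Lemma~\ref{general bijection permutations}(i),(ii).

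One small correction to your step 2 in the graded setting: the graded Gerstenhaber composition has no $(-1)^{(i-1)n}$ factor, so no $(-1)^{(i-1)(\text{arity})}$ sign appears. Instead, the Koszul sign $(-1)^{(\sum_{\mathrm{block}}|x|)(\sum_{j<i}|x_j|)}$ for moving $g$'s block to the front combines with the factor $(-1)^{|g|\sum_{j<i}|x_j|}$ built into $\bar{\circ}_i$. Together they give exactly the sign that $\phi(f)$ absorbs when its argument $g(\cdots)$ is moved from slot $i$ to the front.
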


   We know that $(A,\pi,T)$ is a homotopy  extended Rota-Baxter algebra if and only if $(\pi=\{\pi_i\}_{i\geqslant 1},T=\{T_i\}_{i\geqslant 1})$, where $\pi_i,T_i\in \Hom(A^{\otimes i},A)$ for each $i$, is a Maurer-Cartan element of $({\mathfrak{C}_{\RBA}}(V),\{l_i\})$   by Proposition-Definition \ref{Def: homotopy RB algebras}. By Proposition \ref{relations},  we have:
  
  \begin{prop}
  	Let  $(A,\pi=\{\pi_i\}_{i\geqslant 1},T=\{T_i\}_{i\geqslant 1})$ be a homotopy  extended Rota-Baxter algebra. Then it  induces a homotopy extended Rota-Baxter Lie algebra on $A$ where the $L_\infty$-structure $m=\{m_i\}_{i\geqslant 1}$ is given by $m_n:=\sum_{\sigma \in \rmS_n} \epsilon(\sigma) \pi_n \sigma$, and the homotopy  extended Rota-Baxter operator $T'=\{T'_i\}_{i\geqslant 1}$ on the $L_\infty$-algebra $(A,m=\{m_i\}_{i\geqslant 1})$ is given by $T'_n:=\sum_{\sigma \in \rmS_n} \epsilon(\sigma) T_n \sigma$.
  \end{prop}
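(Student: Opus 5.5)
The plan is to apply Proposition \ref{relations} to the strict $L_\infty[1]$-morphism $\phi\colon(\mathfrak{C}_{\RBA}(A),\{l_i\})\to(\mathfrak{C'}_{\RBA}(A),\{l'_i\})$ from the graded version of Proposition \ref{L infty morphism}. By Proposition-Definition \ref{Def: homotopy RB algebras}, the hypothesis says exactly that $(s\pi,T)=(\{s\pi_i\}_{i\ge 1},\{T_i\}_{i\ge 1})$ is a Maurer-Cartan element of $\mathfrak{C}_{\RBA}(A)$. Proposition \ref{relations} then shows that $\phi(s\pi,T)$ is a Maurer-Cartan element of $\mathfrak{C'}_{\RBA}(A)$. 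By definition of $\phi$, this element is $(\{s m_n\}_{n\ge1},\{T'_n\}_{n\ge1})$ with $m_n=\sum_{\sigma\in\rmS_n}\epsilon(\sigma)\pi_n\sigma$ and $T'_n=\sum_{\sigma\in\rmS_n}\epsilon(\sigma)T_n\sigma$. There is one bookkeeping point: the formula for $\phi$ uses $\sigma^{-1}$, while the statement uses $\sigma$. Since $\sigma\mapsto\sigma^{-1}$ is a bijection of $\rmS_n$ and the Koszul signs agree, the two sums coincide.

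Next I would unpack the Maurer-Cartan equation in $\mathfrak{C'}_{\RBA}(A)$, following the proof of Proposition-Definition \ref{Def: homotopy RB algebras} with $[\cdot,\cdot]_{\G}$ replaced by $[\cdot,\cdot]_{\NR}$. Its $s\mathfrak{C'}_{\Lie}$-component is $[m,m]_{\NR}=0$, which says precisely that $(A,m)$ is an $L_\infty[1]$-algebra; this is the Nijenhuis–Richardson analogue of \eqref{Eq: A infinity}. Its $\mathfrak{C'}_{\RB}$-component is $l'_1(sm)+\sum_{k\ge2}\frac{1}{(k-1)!}l'_k(sm,T',\dots,T')=0$. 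This is the defining equation of a homotopy extended Rota-Baxter operator on $(A,m)$, taken in the sense of the controlling algebra of Theorem \ref{thm: Linfinity for homotopy Lie} from \cite{yj}. So $(A,m,T')$ is a homotopy extended Rota-Baxter Lie algebra by definition.

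The main obstacle is making sure that the hypotheses of Proposition \ref{relations} genuinely apply. First, $\phi$ must be a strict morphism compatible with every $l_i$, including the graded signs $(-1)^{|f||\xi_{\sigma(1)}|}$ and the Koszul signs $\epsilon(\sigma)$. Here I would rely on the graded analogue of Proposition \ref{L infty morphism}, whose proof via the bijections of Lemmas \ref{bijection permutations} and \ref{general bijection permutations} carries over once the Koszul signs are included. Second, the infinite Maurer-Cartan sums must make sense and be preserved by $\phi$. Arity gives this: evaluated on $A^{\otimes n}$, only finitely many terms of each sum contribute, so the sums are finite arity by arity, and $\phi$, which acts arity-wise, commutes with them.
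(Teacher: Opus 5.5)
Your proposal is correct and is essentially the paper's argument: the hypothesis says precisely that $(s\pi,T)$ is a Maurer--Cartan element of $\mathfrak{C}_{\RBA}(A)$, and Proposition~\ref{relations}, applied to the strict morphism $\phi$ from the graded version of Proposition~\ref{L infty morphism}, shows that $(\{sm_n\},\{T'_n\})$ is a Maurer--Cartan element of $\mathfrak{C'}_{\RBA}(A)$. Your additional checks are details the paper leaves implicit. These are the arity-wise finiteness of the Maurer--Cartan sums and the $\sigma$ versus $\sigma^{-1}$ convention; for the latter, the cleaner justification is that both sums are the graded symmetrizer, since the Koszul signs of $\sigma$ and $\sigma^{-1}$ need not coincide on a fixed tuple.
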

    
    \bigskip
    
    \section{Appendix:The proof of Theorem \ref{thm: Linfinity for ass case}}\label{appendix}

	We just need to justify the equalities 
	\begin{align*}
		\sum_{i=1}^{n+1}\sum_{\sigma\in \Sh(i,n+1-i)}\epsilon(\sigma)l_{n-i+2}\circ(l_i\otimes \Id^{\otimes n-i+1})\sigma^{-1}(sf,sg,\xi_1,\dots,\xi_{n-1})=0,
	\end{align*}
    for $|f|=l,|g|=m,|\xi_j|=m_j, j\ge 1.$ All other cases are trivial.
	
	\begin{align*}
			LHS=&	l_{n+1}(l_1(sf),sg,\xi_1,\dots,\xi_{n-1})+(-1)^{(l+1)(m+1)}l_{n+1}(l_1(sg),sf,\xi_1,\dots,\xi_{n-1})\qquad(\text{1})\\ 
			&+l_{n}(l_2(sf,sg),\xi_1,\dots,\xi_{n-1})\qquad(\text{2})\\  
			&+\sum_{i=2}^{n}\sum_{\sigma\in \Sh(i-1,n-i)} \epsilon(\sigma)(-1)^{l(m+1)} l_{n-i+2}(sg,l_i(sf,\xi_{\sigma(1)},\dots,\xi_{\sigma(i-1)}),\xi_{\sigma(i)},\dots,\xi_{\sigma(n-1)})\qquad(\text{3})\\ 
			&+\sum_{i=2}^{n}\sum_{\sigma\in \Sh(i-1,n-i)} \epsilon(\sigma)(-1)^{l+1}l_{n-i+2}(sf,l_i(sg,\xi_{\sigma(1)},\dots,\xi_{\sigma(i-1)}),\xi_{\sigma(i)},\dots,\xi_{\sigma(n-1)})\qquad(\text{4})     
       \end{align*}

     \begin{align*}
     		(1)=&-A_{l}(-1)^{l(m+1)}l_{n+1}(sg,\xi_0,\xi_1,\dots,\xi_{n-1})\\
     		 &\quad(\text{let } \xi^{1'}_{0}:=f \text{ and all other } \xi^{1'}_{k}:=\xi_{k}) \qquad(\text{1.1})\\  
     		-&A_{m}(-1)^{l+1}l_{n+1}(sf,\xi_{0},\xi_1,\dots,\xi_{n-1})\\
     		& \quad(\text{let } \xi^{1''}_{0}:=g \text{ and all other } \xi^{1''}_{k}:=\xi_{k}) \qquad(\text{1.2})\\   
     		=&A_l (-1)^{l(m+1)}\sum\limits_{\substack{\sigma\in \rmS_{n}\\a_j+m_{\sigma(j)}+1\le a_{j+1}\\j\ge 0}}\epsilon(\sigma)\\ 
     		&A_{m-n} \big(\cdots (((g\bar{\circ}_{a_0}\xi^{1'}_{\sigma(0)})\bar{\circ}_{a_1}\xi^{1'}_{\sigma(1)})\bar{\circ}_{a_2}\cdots)\bar{\circ}_{a_{n-1}}\xi^{1'}_{\sigma(n-1)}   \qquad(\text{1.1.1})\\
     		&+A_{l}(-1)^{l(m+1)}\sum\limits_{\substack{\sigma\in \rmS_{n}\\a_j+m_{\sigma(j)}+1\le a_{j+1}\\ j\ge 1}} \epsilon(\sigma) (-1)^{mm^{1'}_{\sigma(0)}} \\ 
     		&B_{m-n+2} \xi^{1'}_{\sigma(0)}\bar{\circ}\big(\cdots ((g\bar{\circ}_{a_1}\xi^{1'}_{\sigma(1)})\bar{\circ}_{a_3}\cdots)\bar{\circ}_{a_{n-1}}\xi^{1'}_{\sigma(n-1)})  \qquad(\text{1.1.2})\\
     		&+A_{m}(-1)^{l+1}\sum\limits_{\substack{\sigma\in \rmS_{n}\\a_j+m_{\sigma(j)+1}\le a_{j+1}\\j\ge 0}}\epsilon(\sigma)\\ 
     		&A_{l-n} \big(\cdots (((f\bar{\circ}_{a_0}\xi^{1''}_{\sigma(0)})\bar{\circ}_{a_1}\xi^{1''}_{\sigma(1)})\bar{\circ}_{a_2}\cdots)\bar{\circ}_{a_{n-1}}\xi^{1''}_{\sigma(n-1)}  \qquad(\text{1.2.1})\\
     		&+A_{m}(-1)^{l+1}\sum\limits_{\substack{\sigma\in \rmS_{n}\\a_j+m_{\sigma(j)}+1\le a_{j+1}\\ j\ge 1}} \epsilon(\sigma) (-1)^{lm^{1''}_{\sigma(0)}}\\ 
     		&B_{l-n+2} \xi^{1''}_{\sigma(0)}\bar{\circ}\big(\cdots (((f\bar{\circ}_{a_1}\xi^{1''}_{\sigma(1)})\bar{\circ}_{a_2}\xi^{1''}_{\sigma(2)})\bar{\circ}_{a_3}\cdots)\bar{\circ}_{a_{n-1}}\xi^{1''}_{\sigma(n-1)})  \qquad(\text{1.2.2})
     \end{align*}

    \begin{align*}
    		(1.1.2)&=A_{l}(-1)^{l(m+1)}\sum\limits_{\substack{\sigma\in \rmS_{n}\\a_j+m_{\sigma(j)}+1\le a_{j+1}\\j\ge 1}} \epsilon(\sigma) (-1)^{mm^{1'}_{\sigma(0)}} \\ 
    		&B_{m-n+2} \xi^{1'}_{\sigma(0)}\bar{\circ}\big(\cdots ((g\bar{\circ}_{a_1}\xi^{1'}_{\sigma(1)})\bar{\circ}_{a_3}\cdots)\bar{\circ}_{a_{n-1}}\xi^{1'}_{\sigma(n-1)})\text{ with $\sigma(0)=0$}\qquad(\text{1.1.2.1})\\
    		+& \text{others}\qquad(\text{1.1.2.2})
    \end{align*}

   \begin{align*}
   		(1.2.2)&=A_{m}(-1)^{l+1}\sum\limits_{\substack{\sigma\in \rmS_{n}\\a_j+m_{\sigma(j)}+1\le a_{j+1}\\ j\ge 1}} \epsilon(\sigma) (-1)^{lm^{1''}_{\sigma(0)}}\\ 
   		&B_{l-n+2} \xi^{1''}_{\sigma(0)}\bar{\circ}\big(\cdots (((f\bar{\circ}_{a_1}\xi^{1''}_{\sigma(1)})\bar{\circ}_{a_2}\xi^{1''}_{\sigma(2)})\bar{\circ}_{a_3}\cdots)\bar{\circ}_{a_{n-1}}\xi^{1''}_{\sigma(n-1)})\text{ with $\sigma(0)=0$}\qquad(\text{1.2.2.1})\\
   		+& \text{others}\qquad(\text{1.2.2.2})
   \end{align*}

	\begin{align*}
			(2)=&(-1)^l l_{n}(s[f,g],\xi_1,\dots,\xi_{n-1})\qquad(\text{2})\\
			=&(-1)^{l+1}
			\sum\limits_{\substack{\sigma\in \rmS_{n-1}\\a_j+m_{\sigma(j)+1}\le a_{j+1}\\j\ge 1}}	A_{m+l-n+1}\epsilon(\sigma)\\ 
			&\big(\cdots((f\bar{\circ}g)\bar{\circ}_{a_1}\xi_{\sigma(1)})\bar{\circ}_{a_2}\cdots)\bar{\circ}_{a_{n-1}}\xi_{\sigma(n-1)})  \qquad(\text{2.1.1})\\ 
			&+(-1)^{l+1}
			\sum\limits_{\substack{\sigma\in \rmS_{n-1}\\a_j+m_{\sigma(j)+1}\le a_{j+1}\\j\ge 2}}	B_{m+l-n+3}\epsilon(\sigma)(-1)^{(l+m)m_{\sigma(1)}} \\ 
			&\xi_{\sigma(1)}\bar{\circ}\big(\cdots((f\bar{\circ}g)\bar{\circ}_{a_2}\xi_{\sigma(2)})\bar{\circ}_{a_3}\cdots)\bar{\circ}_{a_{n-1}}\xi_{\sigma(n-1)}   \qquad(\text{2.1.2})\\	
			&+(-1)^{l+ml}
			\sum\limits_{\substack{\sigma\in \rmS_{n-1}\\a_j+m_{\sigma(j)+1\le a_{j+1}}\\j\ge 1}}	A_{m+l-n+1}\epsilon(\sigma)\\ 
			&\big(\cdots((g\bar{\circ}f)\bar{\circ}_{a_1}\xi_{\sigma(1)})\bar{\circ}_{a_2}\cdots)\bar{\circ}_{a_{n-1}}\xi_{\sigma(n-1)}) \qquad(\text{2.2.1})\\
			&+(-1)^{l+ml}
			\sum\limits_{\substack{\sigma\in \rmS_{n-1}\\a_j+m_{\sigma(j)+1\le a_{j+1}}\\j\ge 2}}	B_{m+l-n+3}\epsilon(\sigma)(-1)^{(l+m)m_{\sigma(1)}} \\ 
			&\xi_{\sigma(1)}\bar{\circ}\big(\cdots((g\bar{\circ}f)\bar{\circ}_{a_2}\xi_{\sigma(2)})\bar{\circ}_{a_3}\cdots)\bar{\circ}_{a_{n-1}}\xi_{\sigma(n-1)}) \qquad(\text{2.2.2})
    \end{align*}

	\begin{align*}
			(3)=&\sum_{i=2}^{n}\sum_{\sigma\in \Sh(i-1,n-i)} \epsilon(\sigma)(-1)^{l(m+1)}	l_{n-i+2}(sg, \sum\limits_{\substack{\sigma_1\in \rmS_{i-1}\\a_j+m_{\sigma\sigma_1(j)}+1\le a_{j+1}\\j\ge 1}}\epsilon(\sigma_1) \\ 
			&-A_{l-i+1}  \xi_{\sigma(i-1)},\xi_{\sigma(i)},\dots,\xi_{\sigma(n-1)})\qquad(\text{3.1})\\     
			&(\text{Let}\quad \xi^{3'}_{\sigma(i-1)}:=\big(\cdots ((f\bar{\circ}_{a_1}\xi_{\sigma\sigma_1(1)})\bar{\circ}_{a_2}\xi_{\sigma\sigma_1(2)})\bar{\circ}_{a_3}\cdots)\bar{\circ}_{a_{i-1}}\xi_{\sigma\sigma_1(i-1)} \text{ and all other } \xi^{3'}_{k}:=\xi_{k})\\
			&+\sum_{i=2}^{n}\sum_{\sigma\in \Sh(i-1,n-i)} \epsilon(\sigma)(-1)^{l(m+1)}	l_{n-i+2}(sg, \sum\limits_{\substack{\sigma_1\in \rmS_{i-1}\\a_j+m_{\sigma\sigma_1(j)}+1\le a_{j+1}\\ j\ge 2}} \epsilon(\sigma_1) (-1)^{lm_{\sigma\sigma_1(1)}} \\ 
			&-B_{l-i+3} \xi_{\sigma(i-1)},\xi_{\sigma(i)},\dots,\xi_{\sigma(n-1)})\qquad(\text{3.2})\\
			&(\text{Let}\quad \xi^{3''}_{\sigma(i-1)}:=\xi_{\sigma\sigma_1(1)}\bar{\circ}\big(\cdots ((f\bar{\circ}_{a_2}\xi_{\sigma\sigma_1(2)})\bar{\circ}_{a_3}\cdots)\bar{\circ}_{a_{i-1}}\xi_{\sigma\sigma_1(i-1)}) \text{ and all other } \xi^{3''}_{k}:=\xi_{k})\\  
			=&\sum_{i=2}^{n}\sum_{\sigma\in \Sh(i-1,n-i)} \epsilon(\sigma)(-1)^{l(m+1)} \sum\limits_{\substack{\sigma_1\in \rmS_{i-1}\\a_j+m_{\sigma\sigma_1(j)}+1\le a_{j+1}\\ j\ge 1}}\epsilon(\sigma_1)\sum\limits_{\substack{\sigma_2\in \rmS_{n-i+1}\\a^{3'}_{j'}+m^{3'}_{\sigma\sigma_2(j')}+1\le a^{3'}_{j'+1}\\j'\ge i-1}}\epsilon(\sigma_2) \\ 
			&A_{l-i+1} A_{m-n+i-1} (((g\bar{\circ}_{a^{3'}_{i-1}}\xi^{3'}_{\sigma\sigma_2(i-1)})\bar{\circ}_{a^{3'}_i}\xi^{3'}_{\sigma\sigma_2(i)})\bar{\circ}_{a^{3'}_{i+1}}\cdots)\bar{\circ}_{a^{3'}_{n-1}}\xi^{3'}_{\sigma\sigma_2(n-1)} \qquad(\text{3.1.1})\\
			&+\sum_{i=2}^{n}\sum_{\sigma\in \Sh(i-1,n-i)} \epsilon(\sigma)(-1)^{l(m+1)} \sum\limits_{\substack{\sigma_1\in \rmS_{i-1}\\a_j+m_{\sigma\sigma_1(j)}+1\le a_{j+1}\\ j\ge 1}}\epsilon(\sigma_1) \sum\limits_{\substack{\sigma_2\in \rmS_{n-i+1}\\a^{3'}_{j'}+m^{3'}_{\sigma\sigma_2(j')}+1\le a^{3'}_{j'+1}\\j'\ge i}}            \epsilon(\sigma_2)(-1)^{m m^{3'}_{\sigma\sigma_2(i-1)}}\\ 
			&A_{l-i+1} B_{m-n+i+1} \xi^{3'}_{\sigma\sigma_2(i-1)}\bar{\circ}(\cdots(g\bar{\circ}_{a^{3'}_{i}}\xi^{3'}_{\sigma\sigma_2(i)})\bar{\circ}_{a^{3'}_{i+1}}\cdots)\bar{\circ}_{a^{3'}_{n-1}}\xi^{3'}_{\sigma\sigma_2(n-1)} \qquad(\text{3.1.2})\\
			&+\sum_{i=2}^{n}\sum_{\sigma\in \Sh(i-1,n-i)} \epsilon(\sigma)(-1)^{l(m+1)} \sum\limits_{\substack{\sigma_1\in \rmS_{i-1}\\a_j+m_{\sigma\sigma_1(j)}+1\le a_{j+1}\\j\ge 2}}\epsilon(\sigma_1)(-1)^{lm_{\sigma\sigma_1(1)}}\sum\limits_{\substack{\sigma_2\in \rmS_{n-i+1}\\a^{3''}_{j'}+m^{3''}_{\sigma\sigma_2(j')}+1\le a^{3''}_{j'+1}\\j' \ge i-1}} \epsilon(\sigma_2) \\ 
			&B_{l-i+3} A_{m-n+i-1} (((g\bar{\circ}_{a^{3''}_{i-1}}\xi^{3''}_{\sigma\sigma_2(i-1)}\bar{\circ}_{a^{3''}_i}\xi^{3''}_{\sigma\sigma_2(i)})\bar{\circ}_{a^{3''}_{i+1}}\cdots)\bar{\circ}_{a^{3''}_{n-1}}\xi^{3''}_{\sigma\sigma_2(n-1)} \qquad(\text{3.2.1})\\
			&+\sum_{i=2}^{n}\sum_{\sigma\in \Sh(i-1,n-i)} \epsilon(\sigma)(-1)^{l(m+1)} \sum\limits_{\substack{\sigma_1\in \rmS_{i-1}\\a_j+m_{\sigma\sigma_1(j)}+1\le a_{j+1}\\j\ge 2}}\epsilon(\sigma_1)(-1)^{lm_{\sigma\sigma_1(1)}}\sum\limits_{\substack{\sigma_2\in \rmS_{n-i+1}\\a^{3''}_{j'}+m^{3''}_{\sigma\sigma_2(j')}+1\le a^{3''}_{j'+1}\\j' \ge i}} \epsilon(\sigma_2) \\ 
			&(-1)^{mm^{3''}_{\sigma\sigma_2(i-1)}}B_{l-i+3} B_{m-n+i+1} \xi^{3''}_{\sigma\sigma_2(i-1)}\bar{\circ}(\cdots(g\bar{\circ}_{a^{3''}_{i}}\xi^{3''}_{\sigma\sigma_2(i)})\bar{\circ}_{a^{3''}_{i+1}}\cdots)\bar{\circ}_{a^{3''}_{n-1}}\xi^{3''}_{\sigma\sigma_2(n-1)}\qquad(\text{3.2.2})	
	\end{align*}

	\begin{align*}
			(3.1.2)&=\sum_{i=2}^{n}\sum_{\sigma\in \Sh(i-1,n-i)} \epsilon(\sigma)(-1)^{l(m+1)} \sum\limits_{\substack{\sigma_1\in \rmS_{i-1}\\a_j+m_{\sigma\sigma_1(j)}+1\le a_{j+1}\\j\ge 1}}\epsilon(\sigma_1) \sum\limits_{\substack{\sigma_2\in \rmS_{n-i+1}\\a^{3'}_{j'}+m^{3'}_{\sigma\sigma_2(j')}+1\le a^{3'}_{j'+1}\\j'\ge i}}\epsilon(\sigma_2)\\ 
			&(-1) ^{m m^{3'}_{\sigma\sigma_2(i-1)}}A_{l-i+1} B_{m-n+i+1} \xi^{3'}_{\sigma\sigma_2(i-1)}\bar{\circ}(\cdots(g\bar{\circ}_{a^{3'}_{i}}\xi^{3'}_{\sigma\sigma_2(i)})\bar{\circ}_{a^{3'}_{i+1}}\cdots)\bar{\circ}_{a^{3'}_{n-1}}\xi^{3'}_{\sigma\sigma_2(n-1)}\\ 
			&\text{ with $\sigma_2(i-1)\neq i-1$}\qquad(\text{3.1.2.1})\\
			+&\sum_{i=2}^{n}\sum_{\sigma\in \Sh(i-1,n-i)} \epsilon(\sigma)(-1)^{l(m+1)} \sum\limits_{\substack{\sigma_1\in \rmS_{i-1}\\a_j+m_{\sigma\sigma_1(j)}+1\le a_{j+1}\\j\ge 1}}\epsilon(\sigma_1) \sum\limits_{\substack{\sigma_2\in \rmS_{n-i+1}\\a^{3'}_{j'}+m^{3'}_{\sigma\sigma_2(j')}+1\le a^{3'}_{j'+1}\\j'\ge i}}\epsilon(\sigma_2)\\ 
			&(-1) ^{m m^{3'}_{\sigma\sigma_2(i-1)}}A_{l-i+1} B_{m-n+i+1} \xi^{3'}_{\sigma\sigma_2(i-1)}\bar{\circ}(\cdots(g\bar{\circ}_{a^{3'}_{i}}\xi^{3'}_{\sigma\sigma_2(i)})\bar{\circ}_{a^{3'}_{i+1}}\cdots)\bar{\circ}_{a^{3'}_{n-1}}\xi^{3'}_{\sigma\sigma_2(n-1)}\\ 
			&\text{ with $\sigma_2(i-1)=i-1$,  $f$ and $g$ are directly composed}\qquad(\text{3.1.2.2})\\
			+& \text{others}\qquad(\text{3.1.2.3})
	\end{align*}
    
    \begin{rmk}
    	$f$ and $g$ are directly composed means that $f \bar{\circ}_i g$ for some $i$ appears in this iterative composition.
    \end{rmk}

   \begin{align*}
		(3.2.2)&=\sum_{i=2}^{n}\sum_{\sigma\in \Sh(i-1,n-i)} \epsilon(\sigma)(-1)^{l(m+1)} \sum\limits_{\substack{\sigma_1\in \rmS_{i-1}\\a_j+m_{\sigma\sigma_1(j)}+1\le a_{j+1}\\j\ge 2}}\epsilon(\sigma_1)(-1)^{lm_{\sigma\sigma_1(1)}}\sum\limits_{\substack{\sigma_2\in \rmS_{n-i+1}\\a^{3''}_{j'}+m^{3''}_{\sigma\sigma_2(j')}+1\le a^{3''}_{j'+1}\\j'\ge i}}\epsilon(\sigma_2) \\ 
		&(-1)^{mm^{3''}_{\sigma\sigma_2(i-1)}}B_{l-i+3} B_{m-n+i+1} \xi^{3''}_{\sigma\sigma_2(i-1)}\bar{\circ}(\cdots(g\bar{\circ}^{3''}_{a_{i}}\xi^{3''}_{\sigma\sigma_2(i)})\bar{\circ}_{a^{3''}_{i+1}}\cdots)\bar{\circ}_{a^{3''}_{n-1}}\xi^{3''}_{\sigma\sigma_2(n-1)}\\
		&\text{ with $\sigma_2(i-1)=i-1$,  $f$ and $g$ are directly composed}\qquad(\text{3.2.2.1})\\
		+& \text{others}\qquad(\text{3.2.2.2})
   \end{align*}

	\begin{align*}
			(4)=&\sum_{i=2}^{n}\sum_{\sigma\in \Sh(i-1,n-i)} \epsilon(\sigma)(-1)^{l+1}l_{n-i+2}(sf,\sum\limits_{\substack{\sigma_1\in \rmS_{i-1}\\a_j+m_{\sigma\sigma_1(j)}+1\le a_{j+1}\\j\ge 1}}\epsilon(\sigma_1) \\ 
			&-A_{m-i+1}  \big(\cdots ((g\bar{\circ}_{a_1}\xi_{\sigma\sigma_1(1)})\bar{\circ}_{a_2}\xi_{\sigma\sigma_1(2)})\bar{\circ}_{a_3}\cdots)\bar{\circ}_{a_{i-1}}\xi_{\sigma\sigma_1(i-1)},\xi_{\sigma(i)},\dots,\xi_{\sigma(n-1)})\qquad(\text{4.1})\\     
			&(\text{Let}\quad \xi^{4'}_{\sigma(i-1)}:=\big(\cdots ((g\bar{\circ}_{a_1}\xi_{\sigma\sigma_1(1)})\bar{\circ}_{a_2}\xi_{\sigma\sigma_1(2)})\bar{\circ}_{a_3}\cdots)\bar{\circ}_{a_{i-1}}\xi_{\sigma\sigma_1(i-1)}\text{ and all other } \xi^{4'}_{k}:=\xi_{k})\\
			&+\sum_{i=2}^{n} \sum_{\sigma\in \Sh(i-1,n-i)} \epsilon(\sigma)(-1)^{l+1} l_{n-i+2}(sf,\sum\limits_{\substack{\sigma\in \rmS_{i-1}\\a_j+m_{\sigma(j)}+1\le a_{j+1}\\ j\ge 2}} \epsilon(\sigma_1) (-1)^{mm_{\sigma\sigma_1(1)}} \\ 
			&-B_{m-i+3}\xi_{\sigma\sigma_1(1)}\bar{\circ}_{a_1}\big(\cdots ((g\bar{\circ}_{a_2}\xi_{\sigma\sigma_1(2)})\bar{\circ}_{a_3}\cdots)\bar{\circ}_{a_{i-1}}\xi_{\sigma\sigma_1(i-1)}),\xi_{\sigma(i)},\dots,\xi_{\sigma(n-1)})\qquad(\text{4.2})\\  
			&(\text{Let}\quad \xi^{4''}_{\sigma(i-1)}:=\xi_{\sigma\sigma_1(1)}\bar{\circ}\big(\cdots ((g\bar{\circ}_{a_2}\xi_{\sigma\sigma_1(2)})\bar{\circ}_{a_3}\cdots)\bar{\circ}_{a_{i-1}}\xi_{\sigma\sigma_1(i-1)}\text{ and all other } \xi^{4''}_{k}:=\xi_{k})\\
			=&\sum_{i=2}^{n}\sum_{\sigma\in \Sh(i-1,n-i)} \epsilon(\sigma)(-1)^{l+1} \sum\limits_{\substack{\sigma_1\in \rmS_{i-1}\\a_j+m_{\sigma\sigma_1(j)}+1\le a_{j+1}\\j\ge 1}}\epsilon(\sigma_1)\sum\limits_{\substack{\sigma_2\in \rmS_{n-i+1}\\a^{4'}_{j'}+m^{4'}_{\sigma\sigma_2(j')}+1\le a^{4'}_{j'+1}\\j' \ge i-1}}\epsilon(\sigma_2)  \\ 
			&A_{m-i+1} A_{l-n+i-1} (((f\bar{\circ}_{a^{4'}_{i-1}}\xi^{4'}_{\sigma\sigma_2(i-1)}\bar{\circ}_{a^{4'}_i}\xi^{4'}_{\sigma\sigma_2(i)})\bar{\circ}_{a^{4'}_{i+1}}\cdots)\bar{\circ}_{a^{4'}_{n-1}}\xi^{4'}_{\sigma\sigma_2(n-1)} \qquad(\text{4.1.1})\\
			&+\sum_{i=2}^{n}\sum_{\sigma\in \Sh(i-1,n-i)} \epsilon(\sigma)(-1)^{l+1} \sum\limits_{\substack{\sigma_1\in \rmS_{i-1}\\a_j+m_{\sigma\sigma_1(j)}+1\le a_{j+1}\\j\ge 1}}\epsilon(\sigma_1)\sum\limits_{\substack{\sigma_2\in \rmS_{n-i+1}\\a^{4'}_{j'}+m^{4'}_{\sigma\sigma_2(j')}+1\le a^{4'}_{j'+1}\\ j'\ge i}}\epsilon(\sigma_2) (-1)^{lm^{4'}_{\sigma\sigma_2(i-1)}}\\ 
			&A_{m-i+1} B_{l-n+i+1} \xi^{4'}_{\sigma\sigma_2(i-1)}\bar{\circ}(\cdots(f\bar{\circ}_{a^{4'}_{i}}\xi^{4'}_{\sigma\sigma_2(i)})\bar{\circ}_{a^{4'}_{i+1}}\cdots)\bar{\circ}_{a^{4'}_{n-1}}\xi^{4'}_{\sigma\sigma_2(n-1)} \qquad(\text{4.1.2})\\
			&+\sum_{i=2}^{n}\sum_{\sigma\in \Sh(i-1,n-i)} \epsilon(\sigma)(-1)^{l+1} \sum\limits_{\substack{\sigma_1\in \rmS_{i-1}\\a_j+m_{\sigma\sigma_1(j)}+1\le a_{j+1}\\j\ge 2}}\epsilon(\sigma_1)(-1)^{mm_{\sigma\sigma_1(1)}}\sum\limits_{\substack{\sigma_2\in \rmS_{n-i+1}\\a^{4''}_{j'}+m^{4''}_{\sigma\sigma_2(j')}+1\le a^{4''}_{j'+1}\\j' \ge i-1}}\epsilon(\sigma_2) \\ 
			&B_{m-i+3} A_{l-n+i-1} (((f\bar{\circ}_{a^{4''}_{i-1}}\xi^{4''}_{\sigma\sigma_2(i-1)}\bar{\circ}_{a^{4''}_i}\xi^{4''}_{\sigma\sigma_2(i)})\bar{\circ}_{a^{4''}_{i+1}}\cdots)\bar{\circ}_{a^{4''}_{n-1}}\xi^{4''}_{\sigma\sigma_2(n-1)} \qquad(\text{4.2.1})\\
			&+\sum_{i=2}^{n}\sum_{\sigma\in \Sh(i-1,n-i)} \epsilon(\sigma)(-1)^{l+1} \sum\limits_{\substack{\sigma_1\in \rmS_{i-1}\\a_j+m_{\sigma\sigma_1(j)}+1\le a_{j+1}\\j\ge 2}}\epsilon(\sigma_1)(-1)^{mm_{\sigma\sigma_1(1)}}\sum\limits_{\substack{\sigma_2\in \rmS_{n-i+1}\\a^{4''}_{j'}+m^{4''}_{\sigma\sigma_2(j')}+1\le a^{4''}_{j'+1}\\j'\ge i}}\epsilon(\sigma_2) \\ 
			&(-1)^{lm^{4''}_{\sigma\sigma_2(i-1)}}B_{m-i+3} B_{l-n+i+1} \xi^{4''}_{\sigma\sigma_2(i-1)}\bar{\circ}(\cdots(f\bar{\circ}_{a^{4''}_{i}}\xi^{4''}_{\sigma\sigma_2(i)})\bar{\circ}_{a^{4''}_{i+1}}\cdots)\bar{\circ}_{a^{4''}_{n-1}}\xi^{4''}_{\sigma\sigma_2(n-1)} \qquad(\text{4.2.2})
	\end{align*}

    \begin{align*}
    		(4.1.2)&=\sum_{i=2}^{n}\sum_{\sigma\in \Sh(i-1,n-i)} \epsilon(\sigma)(-1)^{l+1} \sum\limits_{\substack{\sigma_1\in \rmS_{i-1}\\a_j+m_{\sigma\sigma_1(j)}+1\le a_{j+1}\\j\ge 1}}\epsilon(\sigma_1)\sum\limits_{\substack{\sigma_2\in \rmS_{n-i+1}\\a^{4'}_{j'}+m^{4'}_{\sigma\sigma_2(j')}+1\le a^{4'}_{j'+1}\\ j'\ge i}}\epsilon(\sigma_2) (-1)^{lm^{4'}_{\sigma\sigma_2(i-1)}}\\ 
    		&A_{m-i+1} B_{l-n+i+1} \xi^{4'}_{\sigma\sigma_2(i-1)}\bar{\circ}(\cdots(f\bar{\circ}_{a^{4'}_{i}}\xi^{4'}_{\sigma\sigma_2(i)})\bar{\circ}_{a^{4'}_{i+1}}\cdots)\bar{\circ}_{a^{4'}_{n-1}}\xi^{4'}_{\sigma\sigma_2(n-1)}\\
    		&\text{ with $\sigma_2(i-1)\neq i-1$}\qquad(\text{4.1.2.1})\\
    		+&\sum_{i=2}^{n}\sum_{\sigma\in \Sh(i-1,n-i)} \epsilon(\sigma)(-1)^{l+1} \sum\limits_{\substack{\sigma_1\in \rmS_{i-1}\\a_j+m_{\sigma\sigma_1(j)}+1\le a_{j+1}\\j\ge 1}}\epsilon(\sigma_1)\sum\limits_{\substack{\sigma_2\in \rmS_{n-i+1}\\a^{4'}_{j'}+m^{4'}_{\sigma\sigma_2(j')}+1\le a^{4'}_{j'+1}\\ j'\ge i}}\epsilon(\sigma_2) (-1)^{lm^{4'}_{\sigma\sigma_2(i-1)}}\\ 
    		&A_{m-i+1} B_{l-n+i+1} \xi^{4'}_{\sigma\sigma_2(i-1)}\bar{\circ}(\cdots(f\bar{\circ}_{a^{4'}_{i}}\xi^{4'}_{\sigma\sigma_2(i)})\bar{\circ}_{a^{4'}_{i+1}}\cdots)\bar{\circ}_{a^{4'}_{n-1}}\xi^{4'}_{\sigma\sigma_2(n-1)}\\
    		&\text{ with $\sigma_2(i-1)=i-1$,  $g$ and $f$ are directly composed}\qquad(\text{4.1.2.2})\\
    		+& \text{others}\qquad(\text{4.1.2.3})
    \end{align*}

    \begin{align*}
    		(4.2.2)=&\sum_{i=2}^{n}\sum_{\sigma\in \Sh(i-1,n-i)} \epsilon(\sigma)(-1)^{l+1} \sum\limits_{\substack{\sigma_1\in \rmS_{i-1}\\a_j+m_{\sigma\sigma_1(j)}+1\le a_{j+1}\\j\ge 2}}\epsilon(\sigma_1)(-1)^{mm_{\sigma\sigma_1(1)}}\sum\limits_{\substack{\sigma_2\in \rmS_{n-i+1}\\a^{4''}_{j'}+m^{4''}_{\sigma\sigma_2(j')}+1\le a^{4''}_{j'+1}\\j'\ge i}}\epsilon(\sigma_2) \\ 
    		&(-1)^{lm^{4''}_{\sigma\sigma_2(i-1)}}B_{m-i+3} B_{l-n+i+1} \xi^{4''}_{\sigma\sigma_2(i-1)}\bar{\circ}(\cdots(f\bar{\circ}_{a^{4''}_{i}}\xi^{4''}_{\sigma\sigma_2(i)})\bar{\circ}_{a^{4''}_{i+1}}\cdots)\bar{\circ}_{a^{4''}_{n-1}}\xi^{4''}_{\sigma\sigma_2(n-1)}\\
    		&\text{ with $\sigma_2(i-1)= i-1$,  $g$ and $f$ are directly composed}\qquad(\text{4.2.2.1})\\
    		+& \text{others}\qquad(\text{4.2.2.2})
    \end{align*}

   With the same form of operation of iterative composition, using the Koszul sign and Lemma \ref{key lemma}, we have the following identities,
   \begin{align*}
   	&(1.1.2.1)+(1.2.1)+(2.1.1)+(3.1.2.2)+(4.1.1)=0,\\
   	&(1.2.2.2)+(2.1.2)+(3.2.2.1)+(4.1.2.1)=0,\\
   	&(1.1.1)+(1.2.2.1)+(2.2.1)+(3.1.1)+(4.1.2.2)=0,\\
   	&(1.1.2.2)+(2.2.2)+(3.1.2.1)+(4.2.2.1)=0,\\
   	&(3.1.2.3)+(4.2.1)=0,\\
   	&(3.2.1)+(4.1.2.3)=0,\\
   	&(3.2.2.2)+(4.2.2.2)=0.
   \end{align*}

   For example $(1.1.2.1)+(1.2.1)+(2.1.1)+(3.1.2.2)+(4.1.1)=0$.
   
   \begin{align*}
   LHS=&(-1)^{l}\sum\limits_{\substack{\sigma\in \rmS_{n}-1\\a_j+m_{\sigma(j)}+1\le a_{j+1}\\j\ge 1}} \epsilon(\sigma) (-1)^{mm^{1'}_{\sigma(0)}+ml} \\ 
   &A_{l}B_{m-n+2} \xi^{1'}_{\sigma(0)}\bar{\circ}\big(\cdots ((g\bar{\circ}_{a_1}\xi^{1'}_{\sigma(1)})\bar{\circ}_{a_3}\cdots)\bar{\circ}_{a_{n-1}}\xi^{1'}_{\sigma(n-1)})\text{ with $\sigma(0)=0$}\\
   +&(-1)^{l}\sum_{i=2}^{n}\sum_{\sigma\in \Sh(i-1,n-i)}\epsilon(\sigma)  \sum\limits_{\substack{\sigma_1\in \rmS_{i-1}\\a_j+m_{\sigma\sigma_1(j)}+1\le a_{j+1}\\j\ge 1}}\epsilon(\sigma_1) \sum\limits_{\substack{\sigma_2\in \rmS_{n-i+1}\\a^{3'}_{j'}+m^{3'}_{\sigma\sigma_2(j')}+1\le a^{3'}_{j'+1}\\j'\ge i}}\epsilon(\sigma_2)\\ 
   &(-1) ^{m m^{3'}_{\sigma\sigma_2(i-1)}+ml}A_{l-i+1} B_{m-n+i+1} \xi^{3'}_{\sigma\sigma_2(i-1)}\bar{\circ}(\cdots(g\bar{\circ}_{a^{3'}_{i}}\xi^{3'}_{\sigma\sigma_2(i)})\bar{\circ}_{a^{3'}_{i+1}}\cdots)\bar{\circ}_{a^{3'}_{n-1}}\xi^{3'}_{\sigma\sigma_2(n-1)}\\ 
   &\text{ with $\sigma_2(i-1)=i-1$ and  composite of $f$ and $g$ directly} \\
   +&\sum\limits_{\substack{\sigma\in \rmS_{n}-1\\a_j+m_{\sigma(j)+1}\le a_{j+1}\\j\ge 0}}(-1)^{l+1}\epsilon(\sigma)\\ 
   &A_{m}A_{l-n} \big(\cdots (((f\bar{\circ}_{a_0}\xi^{1''}_{\sigma(0)})\bar{\circ}_{a_1}\xi^{1''}_{\sigma(1)})\bar{\circ}_{a_2}\cdots)\bar{\circ}_{a_{n-1}}\xi^{1''}_{\sigma(n-1)}\\
   +& 
  (-1)^{l+1} 
  \sum\limits_{\substack{\sigma\in \rmS_{n-1}\\a_j+m_{\sigma(j)+1\le a_{j+1}}\\j\ge 1}}	A_{m+l-n+1}\epsilon(\sigma)\\ 
  &\big(\cdots((f\bar{\circ}g)\bar{\circ}_{a_1}\xi_{\sigma(1)})\bar{\circ}_{a_2}\cdots)\bar{\circ}_{a_{n-1}}\xi_{\sigma(n-1)}) \\ 
   +&(-1)^{l+1}\sum_{i=2}^{n}\sum_{\sigma\in \Sh(i-1,n-i)}\epsilon(\sigma)  \sum\limits_{\substack{\sigma_1\in \rmS_{i-1}\\a_j+m_{\sigma\sigma_1(j)}+1\le a_{j+1}\\j\ge 1}}\epsilon(\sigma_1)\sum\limits_{\substack{\sigma_2\in \rmS_{n-i+1}\\a^{4'}_{j'}+m^{4'}_{\sigma\sigma_2(j')}+1\le a^{4'}_{j'+1}\\j' \ge i-1}}\epsilon(\sigma_2)  \\ 
   &A_{m-i+1} A_{l-n+i-1} (((f\bar{\circ}_{a^{4'}_{i-1}}\xi^{4'}_{\sigma\sigma_2(i-1)}\bar{\circ}_{a^{4'}_i}\xi^{4'}_{\sigma\sigma_2(i)})\bar{\circ}_{a^{4'}_{i+1}}\cdots)\bar{\circ}_{a^{4'}_{n-1}}\xi^{4'}_{\sigma\sigma_2(n-1)} \\
   =&0\text{ by Lemma \ref{key lemma}}.
   \end{align*}

\end{document}